\documentclass[11pt]{amsproc}

\def\N{{{\Bbb N}}}

\def\R{{\Bbb R}}

\def\l{{\lambda }}
\def\a{{\alpha }}
\def\D{{\Delta }}

\def\a{{\alpha}}
\def\b{{\beta}}
\def\d{{\delta}}
\def\e{{\varepsilon}}

\def\vp{{\varphi}}
\def\t{{\theta }}
\def\g{{\gamma }}
\def\w{{\omega }}
\def\L{{\Lambda }}

\def\L{\mathcal{L}}
\def\){\right)}
\def\({\left(}

\setlength\textwidth{155mm}
\hoffset=-17mm
\setlength\textheight{230mm}
\voffset=-3mm


\theoremstyle{plain}

\theoremstyle{definition}

\newtheorem{theorem}{Theorem}[section]
\newtheorem{lemma}[theorem]{Lemma}
\newtheorem{remark}[theorem]{Remark}
\newtheorem{corollary}[theorem]{Corollary}
\newtheorem{proposition}[theorem]{Proposition}

\def\N{{{\Bbb N}}}

\def\R{{\Bbb R}}

\def\l{{\lambda }}
\def\a{{\alpha }}
\def\D{{\Delta }}

\def\a{{\alpha}}
\def\b{{\beta}}
\def\d{{\delta}}
\def\e{{\varepsilon}}

\def\vp{{\varphi}}
\def\t{{\theta }}
\def\g{{\gamma }}
\def\w{{\omega }}
\def\L{{\Lambda }}

\def\L{\mathcal{L}}

\def\Dj{\mathcal{\mathcal{D}}}
\def\sw{{\widetilde{\w}}}
\def\st{{\widetilde{\theta}}}
\def\sD{{\widetilde{\D}}}
\def\ab{{(a,b)}}

\def\){\right)}
\def\({\left(}

\par

\sloppy

\begin{document}
\pagespan{1}{}
\keywords{The Jacobi translation, Ditzian-Totik moduli of smoothness, H\"older spaces, best approximation, strong converse inequalities}
\subjclass[msc2010]{41A10, 41A17, 41A25}%



\title[On approximation by algebraic polynomials in H\"older spaces]{On approximation of functions by algebraic polynomials in H\"older spaces}

%

\author[Yurii
Kolomoitsev]{Yurii
Kolomoitsev$^{\text{a},\text{b},\text{c},*}$}
\address{Institute of Applied Mathematics and Mechanics, NAS of Ukraine, Dobrovol's'kogo~1,
84100 Slov'yans'k, Ukraine}
\email{kolomus1@mail.ru}

\author[Tetiana
Lomako]{Tetiana
Lomako$^{\text{a},\text{b},\text{c}}$}
\address{Institute of Applied Mathematics and Mechanics, NAS of Ukraine, Dobrovol's'kogo~1,
84100 Slov'yans'k, Ukraine}
\email{tlomako@yandex.ru}

\author[J\"urgen~Prestin]{J\"urgen~Prestin$^\text{a}$}
\address{Universit\"at zu L\"ubeck,
Institut f\"ur Mathematik,
Ratzeburger Allee 160,
23562 L\"ubeck}
\email{prestin@math.uni-luebeck.de}

\thanks{$^\text{a}$Universit\"at zu L\"ubeck,
Institut f\"ur Mathematik,
Ratzeburger Allee 160,
23562 L\"ubeck}

\thanks{$^\text{b}$Institute of Mathematics of NAS of Ukraine,
Tereschenkivska 3, 01601 Kiev-4, Ukraine}

\thanks{$^\text{c}$Institute of Applied Mathematics and Mechanics, NAS of Ukraine, Dobrovol's'kogo~1,
84100 Slov'yans'k, Ukraine}


\thanks{$^*$Corresponding author}

\thanks{E-mail address: kolomus1@mail.ru}

\begin{abstract}
  We study approximation of functions by algebraic polynomials in the H\"older spaces corresponding to the generalized Jacobi translation and the Ditzian-Totik moduli of smoothness. By using  modifications of the classical moduli of smoothness, we give improvements of the direct and inverse theorems of approximation and prove the criteria of the precise order of decrease of the best approximation in these spaces.
Moreover, we obtain strong converse inequalities for some methods of approximation of functions. As an example, we consider approximation by the Durrmeyer-Bernstein polynomial operators.
\end{abstract}
\maketitle                   





\section{Introduction}

For a long time, there has been some interest in the approximation of functions in H\"older norms.
This interest originated from the study of a certain class of integro-differential equations and from applications in error estimations for singular integral equations. Following the initial works of Kalandiya~\cite{Ka} and Pr\"ossdorf~\cite{Pr} some problems of approximation in H\"older spaces have been studied by Ioakimidis~\cite{Io}, Bloom and Elliott~\cite{BE}, \cite{E},  Bustamante and Roldan~\cite{BR}, and many others.
One can find an interesting survey on this subject in~\cite{BJ}, see also~\cite{PrPr}.

The first result about approximation in the H\"older spaces~\cite{Ka} was obtained in the case of approximation of functions by algebraic polynomials on an interval. Nevertheless, most interesting and sharp results have been obtained for approximation of periodic functions (see, for example,~\cite{KP},~\cite{LMT} and~\cite{Z}). One of the main reasons of this is the possibility of using some nice properties of the translation operators $f(x)\mapsto f(x+y)$, $x,y\in \mathbb{R}/2\pi \mathbb{Z}$, and well studied methods of Harmonic Analysis on the circle.

In this paper, we study approximation of functions by algebraic polynomials. Unlike the previous investigations, we consider the H\"older spaces generated by the generalized Jacobi translation (see~\cite{WL}). Such approach allows us to apply the well-studied methods of Fourier-Jacobi harmonic analysis (see~\cite{Pl14}) and deal with the problems which were solved earlier only for approximation of functions in the periodic H\"older spaces. In this way, we essentially improve the previously known results and obtain  strong converse inequalities (see~\cite{DI}) for some approximation methods in the H\"older spaces. As an example, we consider approximation by the Durrmeyer-Bernstein polynomial operators.

We also get  several approximation results in the H\"older spaces corresponding to the Ditzian-Totik moduli of smoothness. We will see that these spaces are equivalent to the H\"older spaces corresponding to the generalized Jacobi translation in some sense.
However, in the H\"older spaces corresponding to the Ditzian-Totik moduli of smoothness we can also deal with the case $0<p<1$.

The paper is organized as follows: In Section 2 we introduce the H\"older spaces corresponding to the generalized Jacobi translation and present the auxiliary results related to these spaces. In Section~3 we obtain some new properties of the best approximation and prove analogs of some classical theorems of approximation theory in the H\"older spaces.
In Section~4 we prove the strong converse inequalities for approximation of functions by some linear summation methods of Fourier-Jacobi series. In Section~5 we consider similar problems in the H\"older spaces corresponding to the Ditzian-Totik moduli of smoothness. In Section~6 we improve some estimates of approximation by Bernstein operators in the H\"older spaces.

We denote by $C$ some positive constants which may be different at each occurrence. As usual, $A(f, n)\asymp B(f, n)$ will mean that there exists a positive constant $C$ such that $C^{-1} A(f,n)\le B(f,n)\le C A(f,n)$ for all $f$ and $n$.

\section{The H\"older spaces generated by the generalized Jacobi translation}

Let $a,b>-1$. Denote by
\begin{equation*}
  w(x)=w_{a,b}(x)=(1-x)^a (1+x)^b
\end{equation*}
the Jacobi weight on $[-1,1]$. Let $L_{w,p}=L_p([-1,1]; w)$, $1\le p\le\infty$,
be the space of all functions $f$ measurable on $[-1,1]$ with the finite norm
\begin{equation*}
  \Vert f\Vert_{w,p}=\Vert f\Vert_{L_p([-1,1]; w)}=\(\int_{-1}^1 |f(x)|^p w(x)\mathrm{d}x\)^\frac1p.
\end{equation*}
In the unweighted case, we will write $L_p=L_{p}[-1,1]=L_p([-1,1]; w_{0,0})$, $\Vert f\Vert_p=\Vert f\Vert_{L_p[-1,1]}=\Vert f\Vert_{L_p([-1,1]; w_{0,0})}$. For simplicity, we denote
the space $C[-1,1]$ as $L_\infty[-1,1]$ and
\begin{equation*}
  \Vert f\Vert_\infty=\max_{x\in [-1,1]}|f(x)|.
\end{equation*}

For $a,b>-1$, denote by $P_k^\ab(x)$, $k=0,1,\dots$, the system of Jacobi polynomials orthogonal on $[-1,1]$ such that
\begin{equation*}
P_k^{(a,b)}(1)=\binom{k+a}{k},\quad k=0,1,\dots.
\end{equation*}
Let also $R_k^{(a,b)}$ be normalized Jacobi polynomials,
\begin{equation*}
R_k^{(a,b)}(x)=\frac{P_k^{(a,b)}(x)}{P_k^{(a,b)}(1)},\quad k=0,1,\dots.
\end{equation*}

The expansion of a function $f\in {L_{w,p}}$, $1\le p\le\infty$, $a,b>-1$, in the Fourier-Jacobi series has the form
\begin{equation}\label{eqFJs}
  f(x)\sim\sum_{k=0}^\infty c_k^\ab(f)\mu_k^\ab R_k^\ab(x),
\end{equation}
with Fourier coefficiants
\begin{equation*}
c_k^\ab(f)=\int_{-1}^1 f(x)R_k^\ab(x)w(x)\mathrm{d}x,\quad k=0,1,\dots,
\end{equation*}
and
\begin{equation*}
\mu_k^\ab=\Vert R_k^\ab \Vert_{L_{w,2}}^{-2}\approx k^{2a+1}.
\end{equation*}

The Fourier-Jacobi expansion is closely related to the generalized translation operator
$T_h^\ab$, $0<h<\pi$, acting on a function $f\in L_{w,p}$ with expansion (\ref{eqFJs}) by the following formula
\begin{equation}\label{eqTransTab}
  T_h^{(a,b)} f(x)\sim \sum_{k=0}^\infty c_k^\ab (f)\mu_k^\ab R_k^\ab (\cos h)R_k^\ab(x).
\end{equation}
In particular, if $a=b=0$, then one has
\begin{equation*}
  T_h^{(0,0)}f(x)=\frac1\pi\int_{-1}^1 f\(x\cos h+u\sqrt{(1-x^2)(1-u^2)}\)\frac{\mathrm{d}u}{\sqrt{1-u^2}}.
\end{equation*}

Gasper~\cite{Ga} proved that for $a\ge b\ge -1/2$ and $0<h<\pi$ the operator $T_h^\ab$ is positive. Therefore, we have
\begin{equation}\label{eqOper1}
\Vert T_h^\ab f\Vert_{w,p}\le \Vert f\Vert_{w,p}
\end{equation}
and
\begin{equation}\label{eqOper2}
\Vert f- T_h^\ab f\Vert_{w,p}\to 0\quad\text{as}\quad h\to 0.
\end{equation}

In view of (\ref{eqOper1}) and (\ref{eqOper2}), everywhere below, we will suppose that $a\ge b\ge -1/2$ (see also Remark~\ref{remGasper} below).

Let $r>0$ and $0<h<\pi$. The translation operator (\ref{eqTransTab}) allows us to naturally introduce the modulus of smoothness of the $r$th order by \begin{equation*}
\sw_r(f,h)_{w,p}=\sw_r^{(a,b)}(f,h)_{w,p}=\sup_{0<t\le h} \Vert \sD_t^r f\Vert_{w,p},
\end{equation*}
where
\begin{equation*}
\sD_t^r=\sD_t^{r, \ab} = \(I-T_t^\ab\)^{r/2}=\sum_{k=0}^\infty (-1)^k \binom{r/2}{k} \(T_t^\ab\)^k
\end{equation*}
and
$I$ is the identity operator.

In what follows, we put by definition for $h\ge \pi$
\begin{equation*}
\sw_r(f,h)_{w,p}=\sw_r(f,\pi)_{w,p}=\sup_{0<t<\pi}\Vert \sD_t^r f\Vert_{w,p}.
\end{equation*}

Now we are able to define the H\"older spaces with respect to the generalized Jacobi translation $T_h^\ab$.
We will say that $f\in {H_{w,p}^{r,\a}}$ if $f\in {L_{w,p}}$ and
\begin{equation}\label{eqI4}
  \Vert f\Vert_{H_{w,p}^{r,\a}}=\Vert f\Vert_{w,p}+|f|_{H_{w,p}^{r,\a}}<\infty,
\end{equation}
where
\begin{equation*}
  |f|_{H_{w,p}^{r,\a}}=\sup_{h>0}\frac{\sw_r(f,h)_{w,p}}{h^\a}.
\end{equation*}

Some properties of these spaces can be found in~\cite{WL}.

\subsection{Preliminary remarks and auxiliary results}

%
%

Let $\mathcal{P}_n$ be the set of all algebraic polynomials of degree at most $n$.
As usual, the error of the best approximation of a function $f\in L_{w,p}$ by algebraic polynomials of degree at most $n$ is defined as follows:
\begin{equation*}
E_n(f)_{w,p}=\inf_{P\in \mathcal{P}_{n-1}}\Vert f-P\Vert_{w,p},\quad n\in \N.
\end{equation*}
An algebraic polynomial $P\in \mathcal{P}_{n-1}$  is called a polynomial of the best approximation of $f\in L_{w,p}$ if
\begin{equation*}
\Vert f-P\Vert_{w,p}=E_n(f)_{w,p}.
\end{equation*}

Recall the Jackson-type theorem in $L_{w,p}$, see~\cite{Rus}.
\begin{lemma}\label{lem01}
Let $f\in L_{w,p}$, $1\le p\le\infty$, and $r>0$. Then
\begin{equation*}
  E_n(f)_{w,p}\le C\sw_r\(f,\frac 1n\)_{w,p},\quad n\in\N,
\end{equation*}
where $C$ is a constant independent of $n$ and $f$.
\end{lemma}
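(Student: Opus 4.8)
The plan is to realise the error of best approximation through a linear polynomial operator of de la Vall\'ee Poussin type and then to write the resulting error as a \emph{uniformly bounded Fourier--Jacobi multiplier} applied to $\sD_{1/n}^r f$. The whole argument exploits the fact that, on the Fourier--Jacobi side, the translation $T_t^\ab$ acts diagonally, multiplying the $k$th coefficient by $R_k^\ab(\cos t)$; consequently $\sD_t^r=(I-T_t^\ab)^{r/2}$ has the explicit symbol $(1-R_k^\ab(\cos t))^{r/2}$, where we use $|R_k^\ab(\cos t)|\le R_k^\ab(1)=1$, valid for $a\ge b\ge -1/2$.

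First I would fix a $C^\infty$ cut-off $\eta$ on $[0,\infty)$ with $\eta\equiv 1$ on $[0,1]$ and $\eta\equiv 0$ on $[2,\infty)$, and set
\[
V_n f\sim\sum_{k=0}^\infty\eta\!\(\frac kn\)c_k^\ab(f)\,\mu_k^\ab R_k^\ab.
\]
Then $V_n f\in\mathcal P_{2n-1}$, so that $E_{2n}(f)_{w,p}\le\Vert f-V_nf\Vert_{w,p}$, and $V_n$ reproduces every $P\in\mathcal P_n$. The uniform bound $\Vert V_nf\Vert_{w,p}\le C\Vert f\Vert_{w,p}$ for $1\le p\le\infty$ follows from a Marcinkiewicz-type multiplier theorem for Jacobi expansions, since the symbols $k\mapsto\eta(k/n)$ meet the required discrete smoothness conditions uniformly in $n$.

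The core of the argument is the identity $f-V_nf=M_n\sD_{1/n}^rf$, where $M_n$ is the multiplier operator with symbol
\[
m_n(k)=\frac{1-\eta(k/n)}{\(1-R_k^\ab(\cos(1/n))\)^{r/2}}.
\]
This is meaningful because $1-\eta(k/n)$ vanishes for $k\le n$, while on the range $k\ge n$ the denominator stays bounded away from $0$: from $R_k^\ab(\cos t)=1-\frac12 k(k+a+b+1)t^2+\cdots$ one gets $1-R_k^\ab(\cos t)\asymp\min\{1,(kt)^2\}$, so that $(1-R_k^\ab(\cos(1/n)))^{r/2}\ge c>0$ for $k\ge n$. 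Hence $m_n$ is bounded and its discrete variation is controlled uniformly in $n$, and the same multiplier theorem yields $\Vert M_n\Vert_{L_{w,p}\to L_{w,p}}\le C$. Combining these facts gives
\[
E_{2n}(f)_{w,p}\le\Vert f-V_nf\Vert_{w,p}=\Vert M_n\sD_{1/n}^rf\Vert_{w,p}\le C\Vert\sD_{1/n}^rf\Vert_{w,p}\le C\,\sw_r\!\(f,\frac1n\)_{w,p},
\]
and the passage from the even indices $2n$ to all $n\in\N$ is routine, using that $E_n(f)_{w,p}$ is nonincreasing and the standard property $\sw_r(f,\lambda h)_{w,p}\le C(1+\lambda)^r\sw_r(f,h)_{w,p}$.

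The main obstacle is the uniform boundedness of $M_n$. Since $r>0$ need not be an integer, near the transition $k\sim n$ the symbol $m_n$ is only a fractional power, and its first (and, for the Marcinkiewicz condition, higher) discrete differences must be estimated by combining the asymptotics and derivative bounds for $R_k^\ab(\cos t)$ with Leibniz-type estimates for fractional powers; the verification is most delicate at the endpoints $p=1$ and $p=\infty$, where Fourier--Jacobi multipliers are hardest to control. An alternative route that avoids part of this difficulty is to first establish the equivalence $\sw_r(f,t)_{w,p}\asymp K_r(f,t)_{w,p}$ with the $K$-functional built from the fractional Jacobi differential operator, and then apply the Jackson estimate to a near-best smooth polynomial.
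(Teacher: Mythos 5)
The paper does not prove this lemma at all: it is quoted verbatim from Rustamov's work (reference \cite{Rus}, see also \cite{Pl14}), so there is no internal proof to compare against. Your sketch reconstructs what is essentially the standard argument from that literature: de la Vall\'ee Poussin means $V_n$ with a smooth cut-off, the identity $f-V_nf=M_n\sD_{1/n}^rf$ on the level of Fourier--Jacobi coefficients, and the lower bound $1-R_k^\ab(\cos t)\ge c\min\{1,(kt)^2\}$ (valid for $a\ge b\ge -1/2$ and $0<t\le t_0<\pi$, which suffices here since $t=1/n\le 1$) to make the symbol $m_n$ well defined and bounded for $k>n$. The overall architecture is sound, and you correctly identify where the weight of the proof lies.

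The genuine gap is the uniform boundedness of $V_n$ and $M_n$ on $L_{w,p}$ for $p=1$ and $p=\infty$. You invoke ``a Marcinkiewicz-type multiplier theorem,'' but Marcinkiewicz-type criteria for Jacobi expansions give boundedness only for $1<p<\infty$; the endpoint cases, which the lemma explicitly includes, require showing that the kernels $\sum_k m_n(k)\mu_k^\ab R_k^\ab(\cos h)R_k^\ab(x)$ are uniformly in $L^1_w$, typically via repeated Abel summation against Ces\`aro kernels of order exceeding $a+3/2$ together with bounds on discrete differences $|\Delta^\ell m_n(k)|\le C k^{-\ell}$ up to that order. For non-integer $r$ the symbol involves the fractional power $(1-R_k^\ab(\cos(1/n)))^{-r/2}$, and verifying these difference estimates uniformly in $n$ is precisely the technical content of the result in \cite{Rus} and \cite{Pl14}; you flag this as ``delicate'' but do not supply it, and without it the chain of inequalities does not close at $p=1,\infty$. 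Your alternative route through the equivalence $\sw_r(f,t)_{w,p}\asymp\widetilde K_r(f,t)_{w,p}$ (Lemma~\ref{lemeqF} of the paper) does not avoid the issue, since that equivalence and the Jackson estimate for $\widetilde K_r$ rest on the same endpoint multiplier bounds. Everything else --- the reproduction of $\mathcal P_n$ by $V_n$, the passage from $E_{2n}$ to $E_n$ via monotonicity and the doubling property of $\sw_r$ --- is routine as you say.
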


The Jacobi polynomials are the eigenfunctions of the differential operator
\begin{equation*}
\Dj=\Dj^{1}_{w}=\frac{-1}{w(x)}\frac{\mathrm{d}}{\mathrm{d}x}w(x)(1-x^2)\frac{\mathrm{d}}{\mathrm{d}x}\,,
\end{equation*}
\begin{equation*}
\Dj P_k^\ab = \l_k^\ab P_k^\ab,\quad \l_k^\ab=k(k+a+b+1).
\end{equation*}

If for $r>0$ and a function $f\in L_{w,p}$, $1\le p\le\infty$,
there exists a function $g\in {L_{w,p}}$
such that its Fourier-Jacobi series has the form
\begin{equation*}
g(x)\sim \sum_{k=1}^\infty \(\l_k^\ab\)^{r/2} c^\ab_k(f) \mu^\ab_k R_k^\ab(x),
\end{equation*}
then we use the notation $g=\Dj^r f$
and call $\Dj^r f$ the (fractional) derivative of order $r$ of the function $f$.

Most results about approximation in $L_{w,p}$ have been formulated in terms of the generalized $K$-functionals
related to the differential operator $\Dj^r$ (see~\cite{Di07}):
\begin{equation*}
\widetilde{K}_r(f,h)_{w,p}=\inf_{g}\left\{\Vert f-g\Vert_{w,p}+h^r \Vert \Dj^r g\Vert_{w,p}\right\}.
\end{equation*}

There is the following natural connection between moduli of smoothness and $K$-functionals (see~\cite{Rus}):
\begin{lemma}\label{lemeqF}
  Let $f\in L_{w,p}$, $1\le p\le\infty$, and $r>0$. Then
  \begin{equation}\label{eqTildeqKw}
\widetilde{K}_r(f,h)_{w,p}\asymp \sw_r(f,h)_{w,p}, \quad 0<h<\pi.
\end{equation}
\end{lemma}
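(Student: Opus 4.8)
The plan is to establish the two one-sided inequalities behind $\asymp$ separately, the whole argument resting on a comparison of the Fourier--Jacobi multipliers of $\sD_t^r$ and of $\Dj^r$. The single analytic fact I would isolate first is the two-sided estimate
\[
1-R_k^\ab(\cos t)\asymp \min\(1,\l_k^\ab t^2\),\qquad k\ge 0,\ 0<t<\pi,
\]
which follows from the differential equation $\Dj R_k^\ab=\l_k^\ab R_k^\ab$ together with the standard asymptotics of $R_k^\ab(\cos t)$. Since the multiplier of $\sD_t^r$ on the $k$th mode is $\(1-R_k^\ab(\cos t)\)^{r/2}$ while that of $\Dj^r$ is $\(\l_k^\ab\)^{r/2}$, this comparison is exactly what allows the difference operator and the differential operator to be traded for one another.

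For the estimate $\sw_r(f,h)_{w,p}\le C\,\widetilde{K}_r(f,h)_{w,p}$ I would, for an arbitrary admissible $g$, write $\sD_t^r f=\sD_t^r(f-g)+\sD_t^r g$. The first term is handled by the uniform bound $\Vert \sD_t^r\Vert_{L_{w,p}\to L_{w,p}}\le \sum_k\bigl|\binom{r/2}{k}\bigr|<\infty$, which comes from the contractivity $\Vert T_t^\ab\Vert_{w,p}\le 1$ in~(\ref{eqOper1}) and the absolute convergence of the binomial series for $r>0$. For the second term I would prove the smoothing inequality $\Vert \sD_t^r g\Vert_{w,p}\le C\,t^r\Vert \Dj^r g\Vert_{w,p}$: here $\sD_t^r g$ is the Fourier--Jacobi multiplier transform of $\Dj^r g$ with symbol $\(\(1-R_k^\ab(\cos t)\)/\l_k^\ab\)^{r/2}$, which by the upper half of the displayed comparison is $\le C t^r$ uniformly in $k$. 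Taking the supremum over $0<t\le h$ and then the infimum over $g$ gives the claim.

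For the reverse estimate $\widetilde{K}_r(f,h)_{w,p}\le C\,\sw_r(f,h)_{w,p}$ I would exhibit an explicit realizer. Fix $n\asymp 1/h$ and let $g=L_nf$ be a de la Vallée Poussin--type operator whose multiplier equals $\eta(k/n)$ for a fixed smooth cut-off $\eta$ with $\eta\equiv 1$ on $[0,1]$ and $\supp\eta\subset[0,2)$; then $L_nf\in\mathcal{P}_{2n-1}$ and $L_n$ reproduces $\mathcal{P}_n$, so $\Vert f-L_nf\Vert_{w,p}\le C\,E_n(f)_{w,p}\le C\,\sw_r(f,1/n)_{w,p}\le C\,\sw_r(f,h)_{w,p}$ by Lemma~\ref{lem01}. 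For the derivative term I would factor $\Dj^r L_n=\bigl(\Dj^r L_n(\sD_{1/n}^r)^{-1}\bigr)\,\sD_{1/n}^r$: on the support of $\eta(k/n)$ one has $\l_k^\ab/n^2\lesssim 1$, so the lower half of the comparison gives $1-R_k^\ab(\cos(1/n))\asymp\l_k^\ab/n^2$ and the symbol of the bracketed operator is $\asymp n^r\eta(k/n)$. Hence $\Vert \Dj^r L_nf\Vert_{w,p}\le C\,n^r\Vert \sD_{1/n}^r f\Vert_{w,p}\le C\,n^r\sw_r(f,1/n)_{w,p}$, and multiplying by $h^r\asymp n^{-r}$ shows that $g=L_nf$ realizes $\widetilde{K}_r(f,h)_{w,p}$ up to a constant.

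The main obstacle is not the symbol arithmetic but the passage from pointwise control of these symbols to uniform operator bounds on $L_{w,p}$ for the full range $1\le p\le\infty$ and fractional $r$: each of the steps above needs a Marcinkiewicz--type Fourier--Jacobi multiplier theorem, so the real work is to verify its hypotheses (uniform bounds on the symbols and their finite differences in $k$, uniformly in $t$ and $n$) for the three multiplier families arising above. These tools are precisely the Fourier--Jacobi harmonic analysis of~\cite{Pl14} and~\cite{Rus}, and once they are in place both inequalities follow as described.
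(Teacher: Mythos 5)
The paper does not prove this lemma at all: it is quoted verbatim from Rustamov~\cite{Rus} (see also~\cite{Pl14}), so there is no internal argument to compare yours against. Your outline is, in substance, the standard route taken in that literature: the pointwise symbol comparison $1-R_k^\ab(\cos t)\asymp\min\(1,\l_k^\ab t^2\)$, the splitting $\sD_t^r f=\sD_t^r(f-g)+\sD_t^r g$ with the uniform bound $\sum_k\bigl|\binom{r/2}{k}\bigr|<\infty$ for the first piece, and a de la Vall\'ee Poussin realizer for the converse inequality. All of these steps are correctly identified, and the realizer construction is exactly how the equivalence is proved in~\cite{Rus} and~\cite{Pl14}.

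That said, as a self-contained proof your text has one genuine gap, which you yourself flag in the last paragraph but do not close: every one of the three multiplier families you introduce (the symbol $\((1-R_k^\ab(\cos t))/\l_k^\ab\)^{r/2}t^{-r}$, the cut-off $\eta(k/n)$, and the symbol $n^{-r}\(\l_k^\ab\)^{r/2}\eta(k/n)\(1-R_k^\ab(\cos(1/n))\)^{-r/2}$) must be shown to be bounded on $L_{w,p}$ \emph{uniformly} in $t$ resp.\ $n$, for all $1\le p\le\infty$; pointwise control of the symbols gives this only for $p=2$. Verifying the Marcinkiewicz-type hypotheses requires estimates on finite differences in $k$ of $R_k^\ab(\cos t)$, uniformly in $t$, and this is precisely the technical content of the cited references --- it is not a routine verification. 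Likewise the lower pointwise bound $1-R_k^\ab(\cos t)\ge c\min\(1,\l_k^\ab t^2\)$ (needed for the realizer step) rests on uniform Mehler--Heine-type asymptotics and deserves either a proof or a precise citation. So your proposal is a correct reduction of the lemma to known results of Fourier--Jacobi harmonic analysis, which is exactly the status the lemma has in the paper, but it is not a proof from first principles.
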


We will often use the following two lemmas. The first one is the Stechkin-Nikolskii type inequality (see~\cite{Rus}):

\begin{lemma}\label{lem1}
Let $1\le p\le\infty$, $n\in\N$, $0<h<\pi/n$, and $r>0$. Then  for any  polynomial $P_n \in \mathcal{P}_n$ we have
\begin{equation*}
  h^{r}\Vert \Dj^r P_n\Vert_{w,p}\asymp \sw_r(P_n,h)_{w,p},
\end{equation*}
where $\asymp$ is a two-sided inequality with absolute constants independent of $P_n$ and $h$.
Moreover, if $P_n \in \mathcal{P}_{n-1}$ is a polynomial of the best approximation of a function $f\in L_{w,p}$, then
\begin{equation*}
\Vert \sD_h^r P_n\Vert_{w,p}\le C\sw_r\(f,\frac1n\)_{w,p},
\end{equation*}
where $C$ is a constant independent of $P_n$, $h$, and $f$.
\end{lemma}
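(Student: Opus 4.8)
The plan is to prove the equivalence by establishing the two one-sided inequalities separately, and then to deduce the ``moreover'' part from the boundedness of $\sD_h^r$ together with Jackson's theorem. First I would treat the easy direction $\sw_r(P_n,h)_{w,p}\le C h^r\Vert\Dj^r P_n\Vert_{w,p}$. By Lemma~\ref{lemeqF} the modulus is equivalent to the $K$-functional $\widetilde{K}_r$, and inserting the competitor $g=P_n$ into the infimum defining $\widetilde{K}_r(P_n,h)_{w,p}$ gives at once $\widetilde{K}_r(P_n,h)_{w,p}\le h^r\Vert\Dj^r P_n\Vert_{w,p}$. This requires no computation beyond the cited equivalence.

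The main work is the reverse (Bernstein-type) inequality $h^r\Vert\Dj^r P_n\Vert_{w,p}\le C\sw_r(P_n,h)_{w,p}$, which I would settle through Fourier--Jacobi multipliers. Writing $P_n=\sum_{k=0}^n c_k^\ab(f)\mu_k^\ab R_k^\ab$, the operators $\Dj^r$ and $\sD_h^r$ act diagonally, with symbols $(\l_k^\ab)^{r/2}$ and $(1-R_k^\ab(\cos h))^{r/2}$. Hence $h^r\Dj^r P_n=\Lambda_h\sD_h^r P_n$, where $\Lambda_h$ is the multiplier operator with symbol
\begin{equation*}
\eta_k=\(\frac{\l_k^\ab h^2}{1-R_k^\ab(\cos h)}\)^{r/2},\qquad 1\le k\le n,
\end{equation*}
and $\eta_0=0$ (both $\Dj^r$ and $\sD_h^r$ annihilate the constant term). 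The crucial analytic input is the two-sided estimate
\begin{equation*}
1-R_k^\ab(\cos h)\asymp \l_k^\ab h^2\asymp (kh)^2,\qquad 0<kh\le\pi,
\end{equation*}
which rests on $(R_k^\ab)'(1)=\l_k^\ab/(2(a+1))$ and the known uniform bounds for Jacobi polynomials near the endpoint (the model case $a=b=-1/2$ being $1-\cos(kh)=2\sin^2(kh/2)\asymp(kh)^2$). Granting this, $\eta_k$ is bounded above uniformly in $n$ and in $0<h<\pi/n$, and its finite differences in $k$ satisfy the hypotheses of the Marcinkiewicz--H\"ormander-type multiplier theorem for Fourier--Jacobi expansions (see~\cite{Pl14}). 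Consequently $\Vert\Lambda_h g\Vert_{w,p}\le C\Vert g\Vert_{w,p}$ with $C$ independent of $n,h$, and applying this to $g=\sD_h^r P_n$, together with $\Vert\sD_h^r P_n\Vert_{w,p}\le\sw_r(P_n,h)_{w,p}$, yields the claim. I expect the verification of the multiplier conditions, with constants independent of both $n$ and $h$, to be the principal obstacle; the uniform endpoint estimate for $1-R_k^\ab(\cos h)$ is the second technical point.

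For the ``moreover'' statement I would first note that $\sD_h^r$ is bounded on $L_{w,p}$: since $r>0$ the binomial coefficients satisfy $\sum_{k\ge0}|\binom{r/2}{k}|<\infty$, and as each $T_h^\ab$ is a contraction by~\eqref{eqOper1}, one gets $\Vert\sD_h^r\Vert_{w,p\to w,p}\le\sum_{k\ge0}|\binom{r/2}{k}|=:C_r$. Then, splitting,
\begin{equation*}
\Vert\sD_h^r P_n\Vert_{w,p}\le\Vert\sD_h^r(P_n-f)\Vert_{w,p}+\Vert\sD_h^r f\Vert_{w,p}\le C_r E_n(f)_{w,p}+\sw_r(f,h)_{w,p}.
\end{equation*}
By Lemma~\ref{lem01}, $E_n(f)_{w,p}\le C\sw_r(f,1/n)_{w,p}$; and since $h<\pi/n$, monotonicity of the modulus together with the scaling $\widetilde{K}_r(f,\lambda h)\le\lambda^r\widetilde{K}_r(f,h)$ for $\lambda\ge1$ (immediate from the definition, then transported through Lemma~\ref{lemeqF}) gives $\sw_r(f,h)_{w,p}\le\sw_r(f,\pi/n)_{w,p}\le C\sw_r(f,1/n)_{w,p}$. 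Combining these estimates completes the proof.
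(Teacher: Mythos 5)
The paper does not prove this lemma at all: it is quoted verbatim from Rustamov \cite{Rus} (see also \cite{Pl14}), so there is no in-paper argument to compare against, and your outline reconstructs what is essentially the route of those sources. Two of your three pieces are complete and correct. Taking $g=P_n$ in the $K$-functional together with Lemma~\ref{lemeqF} settles the upper bound $\sw_r(P_n,h)_{w,p}\le Ch^r\Vert\Dj^r P_n\Vert_{w,p}$. The ``moreover'' part is also fine: the absolute summability of $\binom{r/2}{k}$ for $r>0$ plus \eqref{eqOper1} gives $\Vert\sD_h^r\Vert_{w,p\to w,p}\le C_r$, and the splitting through $f$, Lemma~\ref{lem01}, and the scaling $\widetilde K_r(f,\pi/n)_{w,p}\le\pi^r\widetilde K_r(f,1/n)_{w,p}$ close that estimate.

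The gap is in the Bernstein-type direction $h^r\Vert\Dj^r P_n\Vert_{w,p}\le C\sw_r(P_n,h)_{w,p}$, which is the substance of the lemma and which you reduce to two assertions you do not verify. First, the two-sided bound $1-R_k^\ab(\cos h)\asymp\l_k^\ab h^2$ on the whole range $0<kh\le\pi$: the upper bound and the regime of small $kh$ do follow from the endpoint derivative $(R_k^\ab)'(1)\asymp\l_k^\ab$, but the lower bound for $kh$ of order one requires that $R_k^\ab(\cos h)$ stay uniformly away from $1$ there, which rests on interior (Mehler--Heine/Darboux) asymptotics of Jacobi polynomials, not on the endpoint expansion; this needs to be argued or precisely cited. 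Second, and more seriously, the uniform-in-$h$ and uniform-in-$n$ boundedness of the multiplier $\Lambda_h$ on $L_{w,p}$ for \emph{all} $1\le p\le\infty$ is exactly the content of the Stechkin--Nikolskii inequality: Marcinkiewicz--H\"ormander-type theorems for Jacobi expansions typically give only $1<p<\infty$, and at the endpoints $p=1,\infty$ one must exploit that only frequencies $k\le n$ occur, extend $\eta$ to a smooth compactly supported function of $kh$, and control sufficiently many differences $\Delta^j\eta_k$ (up to an order depending on $a,b$) uniformly in $h$. You correctly flag this as the principal obstacle but leave it entirely unverified, so as written the first displayed equivalence is a plan rather than a proof; the one step that makes the lemma nontrivial is still delegated, as in the paper, to \cite{Rus}.
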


\begin{lemma}\label{lemHolder}
Let $1\le p\le\infty$ and $0<\a<r<k$. Then the norms of a function in $H_{w,p}^{r,\a}$ and $H_{w,p}^{k,\a}$ are equivalent.
%
%
\end{lemma}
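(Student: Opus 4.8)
The plan is to reduce the statement to an $r$-independent description of the norm and to observe that the genuine content lies in a single inequality. Since the summand $\Vert f\Vert_{w,p}$ occurs in both norms (see (\ref{eqI4})), it suffices to prove, for every exponent $r>\a$, the equivalence
\begin{equation*}
\Vert f\Vert_{H_{w,p}^{r,\a}}\asymp \Vert f\Vert_{w,p}+\sup_{n\in\N}n^\a E_n(f)_{w,p}\qquad(r>\a).
\end{equation*}
The right-hand side does not involve $r$, so applying this with $r$ and with $k$ yields $\Vert f\Vert_{H_{w,p}^{r,\a}}\asymp \Vert f\Vert_{H_{w,p}^{k,\a}}$, which is the assertion. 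Of the two inequalities hidden in this $\asymp$, only one is serious: the passage from the best approximation back to the modulus.

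One half is in fact elementary and already shows that the $H_{w,p}^{r,\a}$-norm dominates the $H_{w,p}^{k,\a}$-norm. Indeed, reading off the Fourier--Jacobi multiplier $(1-R_k^\ab(\cos t))^{r/2}$ of $\sD_t^{r}$ one gets the factorization
\begin{equation*}
\sD_t^{k}=\sD_t^{\,k-r}\sD_t^{\,r},\qquad \sD_t^{\,k-r}=\sum_{j=0}^\infty(-1)^j\binom{(k-r)/2}{j}\(T_t^\ab\)^j.
\end{equation*}
Because $k-r>0$ we have $\sum_{j\ge0}\bigl|\binom{(k-r)/2}{j}\bigr|<\infty$, and since $\Vert T_t^\ab\Vert_{w,p}\le1$ by (\ref{eqOper1}) this gives $\Vert\sD_t^{\,k-r}g\Vert_{w,p}\le C\Vert g\Vert_{w,p}$; taking $g=\sD_t^{r}f$ yields $\sw_k(f,h)_{w,p}\le C\sw_r(f,h)_{w,p}$. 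The same summability (now with exponent $r/2>0$) gives the crude bound $\sw_r(g,h)_{w,p}\le C\Vert g\Vert_{w,p}$, used below. The upper estimate $\sup_n n^\a E_n(f)_{w,p}\le C\Vert f\Vert_{H_{w,p}^{r,\a}}$ in the displayed equivalence is then immediate from Jackson's theorem (Lemma \ref{lem01}), since $n^\a E_n(f)_{w,p}\le Cn^\a\sw_r(f,1/n)_{w,p}\le C|f|_{H_{w,p}^{r,\a}}$.

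The main step is the reverse estimate $|f|_{H_{w,p}^{r,\a}}\le C\bigl(\Vert f\Vert_{w,p}+\sup_n n^\a E_n(f)_{w,p}\bigr)$, an inverse theorem that I would prove by the standard telescoping argument. Let $P_m\in\mathcal P_{m-1}$ be a polynomial of best approximation, put $Q_j=P_{2^{j+1}}-P_{2^j}\in\mathcal P_{2^{j+1}-1}$, so $\Vert Q_j\Vert_{w,p}\le 2E_{2^j}(f)_{w,p}$, and write $M=\sup_n n^\a E_n(f)_{w,p}$. Fixing $h\in(0,\pi)$ and the index $m$ with $2^{-m}\le h<2^{-m+1}$, I split $f=\sum_{j\le m}Q_j+\sum_{j>m}Q_j$. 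On the tail the crude bound together with $E_{2^j}(f)_{w,p}\le M2^{-j\a}$ gives a contribution $\le CM\sum_{j>m}2^{-j\a}\le CMh^\a$. On the low-frequency part I use the Bernstein inequality $\Vert\Dj^rQ_j\Vert_{w,p}\le C2^{jr}\Vert Q_j\Vert_{w,p}$, which follows from Lemma \ref{lem1}, in the form $\sw_r(Q_j,h)_{w,p}\le C(h2^j)^r\Vert Q_j\Vert_{w,p}$ (valid for all $j\le m$: for $h2^j$ of order one, one simply uses the crude bound), and then sum $h^r\sum_{j\le m}2^{j(r-\a)}\le Ch^r2^{m(r-\a)}\asymp h^\a$. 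The convergence of this geometric sum is precisely where $r>\a$ enters. Collecting the two parts gives $\sw_r(f,h)_{w,p}\le C(M+\Vert f\Vert_{w,p})h^\a$, as required.

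The main obstacle is exactly this inverse estimate, i.e. bounding the lower-order modulus by the higher-order data. One may also bypass the best approximation altogether and argue directly through a Marchaud-type inequality
\begin{equation*}
\sw_r(f,t)_{w,p}\le Ct^r\(\Vert f\Vert_{w,p}+\int_t^\pi\frac{\sw_k(f,u)_{w,p}}{u^{r+1}}\,\mathrm{d}u\),
\end{equation*}
into which one inserts $\sw_k(f,u)_{w,p}\le|f|_{H_{w,p}^{k,\a}}u^\a$ and evaluates the integral using $\a<r$; this delivers $|f|_{H_{w,p}^{r,\a}}\le C(\Vert f\Vert_{w,p}+|f|_{H_{w,p}^{k,\a}})$ in one line. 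Either way the only delicate points are the justification of the Bernstein (or Marchaud) inequality for the fractional operator $\sD_t^{r}$ and the bookkeeping for the top dyadic block, both of which are routine once the multiplier identities above are available.
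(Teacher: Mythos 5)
Your proof is correct, but its main line is genuinely different from the paper's. The paper settles the lemma in two sentences: the easy direction is the monotonicity $\sw_k(f,h)_{w,p}\le 2^{k-r}\sw_r(f,h)_{w,p}$ (your factorization $\sD_t^{k}=\sD_t^{\,k-r}\sD_t^{\,r}$ together with the summability of $\binom{(k-r)/2}{j}$ and \eqref{eqOper1} is exactly the proof of this), and the hard direction is delegated to the Marchaud inequality $\sw_r(f,h)_{w,p}\le Ch^r\int_h^\pi u^{-r-1}\sw_k(f,u)_{w,p}\,\mathrm{d}u$, imported from Ditzian's $K$-functional version via Lemma~\ref{lemeqF} and fed into the DeVore--Lorentz scheme; that is precisely the ``bypass'' you sketch in your final paragraph. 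Your primary route instead proves the $r$-free characterization $\Vert f\Vert_{H_{w,p}^{r,\a}}\asymp\Vert f\Vert_{w,p}+\sup_n n^\a E_n(f)_{w,p}$ for every $r>\a$, the forward half by Jackson (Lemma~\ref{lem01}) and the inverse half by dyadic telescoping with the Bernstein bound $\Vert\Dj^r Q_j\Vert_{w,p}\le C2^{jr}\Vert Q_j\Vert_{w,p}$ extracted from Lemma~\ref{lem1}. This is longer, but it is self-contained within the lemmas already stated in the paper and does not require the Marchaud inequality as an external input; as a by-product it yields the approximation-theoretic description of $H_{w,p}^{r,\a}$, in the spirit of Theorem~\ref{lem4}, and makes visible where the strict inequality $\a<r$ enters (the geometric sum $\sum_{j\le m}2^{j(r-\a)}$ — the equivalence indeed fails at $\a=r$, which the hypothesis excludes). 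Your treatment of the delicate points — the top dyadic blocks where $h2^{j}\asymp1$, the tail via the crude bound $\sw_r(g,h)_{w,p}\le C\Vert g\Vert_{w,p}$, and the initial polynomial contributing the $\Vert f\Vert_{w,p}$ term — is sound.
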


\begin{proof}
To prove the lemma we can use the scheme of the proof of  Theorem 10.1 in~\cite[Ch. 2]{DL}. For this we only need to use the inequality
\begin{equation*}
  \sw_k(f,h)_{w,p}\le 2^{k-r}\sw_r(f,h)_{w,p},\quad k>r,\quad h>0,
\end{equation*}
which can be easily obtained from  (\ref{eqOper1}), and the Marchaud inequality
\begin{equation*}
  \sw_r(f,h)_{w,p}\le C h^r \int_h^\pi \frac{\sw_k(f,u)_{w,p}}{u^{r+1}}\mathrm{d}u,\quad k>r,\quad 0<h<\pi.
\end{equation*}
Note that the last inequality follows from the corresponding inequality for the $K$-functionals in \cite{Di98} and Lemma~\ref{lemeqF}.
\end{proof}

\section{Properties of the best approximation. Direct and inverse theorems}

Denote the error of the best approximation in the H\"older space ${H_{w,p}^{r,\a}}$ by
\begin{equation*}
E_n(f)_{H_{w,p}^{r,\a}}=\inf_{P\in \mathcal{P}_{n-1}}\Vert f-P\Vert_{H_{w,p}^{r,\a}},\quad n\in \N.
\end{equation*}
As above, an algebraic polynomial $P\in \mathcal{P}_{n-1}$  is called a polynomial of the best approximation of $f\in H_{w,p}^{r,\a}$ if
\begin{equation*}
\Vert f-P\Vert_{H_{w,p}^{r,\a}}=E_n(f)_{H_{w,p}^{r,\a}}.
\end{equation*}

Let us establish a connection between the errors of the best approximation in the spaces $H_{w,p}^{r,\a}$ and $L_{w,p}$.

\begin{theorem}\label{lem4}
 Let $f\in {H_{w,p}^{r,\a}}$, $1\le p\le \infty$, and $0<\a\le r$. Then
\begin{equation}\label{eqlem4pr1}
  C^{-1} n^\a E_n(f)_{w,p} \le  E_n(f)_{H_{w,p}^{r,\a}}\le C\(n^\a E_n(f)_{w,p}+
\sum_{\nu=n+1}^\infty \nu^{\a-1}E_\nu(f)_{w,p}\),\quad n\in \N,
\end{equation}
where $C$ is a positive constant independent of $n$ and $f$.
\end{theorem}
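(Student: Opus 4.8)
The plan is to establish the two inequalities of (\ref{eqlem4pr1}) separately; the left one is immediate and the right one is the substantial part. For the lower bound, fix an arbitrary $P\in\mathcal{P}_{n-1}$. Subtracting a polynomial of degree at most $n-1$ does not affect the error of best $\mathcal{P}_{n-1}$-approximation, so $E_n(f-P)_{w,p}=E_n(f)_{w,p}$. Evaluating the H\"older seminorm of $f-P$ at $h=1/n$ and applying the Jackson inequality of Lemma~\ref{lem01} to $f-P$, I obtain
\[
n^\a E_n(f)_{w,p}=n^\a E_n(f-P)_{w,p}\le C n^\a \sw_r\(f-P,\frac1n\)_{w,p}\le C\,|f-P|_{H_{w,p}^{r,\a}}\le C\Vert f-P\Vert_{H_{w,p}^{r,\a}}.
\]
Taking the infimum over $P\in\mathcal{P}_{n-1}$ gives the left-hand inequality.

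For the upper bound I would take $P=Q_n$, a polynomial of best approximation of $f$ in $L_{w,p}$, so that the $\Vert\cdot\Vert_{w,p}$-part of $\Vert f-Q_n\Vert_{H_{w,p}^{r,\a}}$ equals $E_n(f)_{w,p}\le n^\a E_n(f)_{w,p}$ and only the seminorm $\sup_{h>0}h^{-\a}\sw_r(f-Q_n,h)_{w,p}$ remains. I split this supremum at $h=1/n$. For $h\ge 1/n$ I use that $\sD_t^r=(I-T_t^\ab)^{r/2}$ is bounded on $L_{w,p}$, which follows from the contraction property (\ref{eqOper1}) and the summability of the coefficients $\binom{r/2}{k}$; hence $\sw_r(f-Q_n,h)_{w,p}\le C\Vert f-Q_n\Vert_{w,p}=C E_n(f)_{w,p}$, and dividing by $h^\a\ge n^{-\a}$ bounds this part of the seminorm by $Cn^\a E_n(f)_{w,p}$.

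The essential range is $0<h\le 1/n$. Here I choose $N=\lfloor 1/h\rfloor\ge n$, so that $h\le 1/N$ and $h\asymp 1/N$, pick a polynomial $Q_N$ of best approximation of degree at most $N-1$, and write $f-Q_n=(f-Q_N)+(Q_N-Q_n)$. The first summand is controlled as before: $\sw_r(f-Q_N,h)_{w,p}\le C E_N(f)_{w,p}$. For the second summand $Q_N-Q_n\in\mathcal{P}_{N-1}$ and $h\le 1/N$, so the Stechkin--Nikolskii inequality (Lemma~\ref{lem1}) replaces the modulus by a derivative, $\sw_r(Q_N-Q_n,h)_{w,p}\le C h^r\Vert\Dj^r(Q_N-Q_n)\Vert_{w,p}$. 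To estimate the derivative I telescope $Q_N-Q_n$ along the dyadic scale $n,2n,4n,\dots,N$, bound each block by the Bernstein-type inequality $\Vert\Dj^r P_m\Vert_{w,p}\le Cm^r\Vert P_m\Vert_{w,p}$ (the special case $h=1/m$ of Lemma~\ref{lem1}), use $\Vert Q_{2^{j+1}n}-Q_{2^jn}\Vert_{w,p}\le 2E_{2^jn}(f)_{w,p}$, and convert the resulting dyadic sum into an integral-type series by means of the monotonicity of $E_\nu(f)_{w,p}$, reaching
\[
\Vert\Dj^r(Q_N-Q_n)\Vert_{w,p}\le C\(n^r E_n(f)_{w,p}+\sum_{\nu=n+1}^N \nu^{r-1}E_\nu(f)_{w,p}\).
\]

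It remains to assemble the pieces and carry out the exponent bookkeeping, which is exactly where the hypothesis $0<\a\le r$ enters. Dividing the bound for $\sw_r(f-Q_n,h)_{w,p}$ by $h^\a\asymp N^\a$ gives, for $0<h\le 1/n$,
\[
\frac{\sw_r(f-Q_n,h)_{w,p}}{h^\a}\le C\(N^\a E_N(f)_{w,p}+N^{\a-r}n^r E_n(f)_{w,p}+N^{\a-r}\sum_{\nu=n+1}^N\nu^{r-1}E_\nu(f)_{w,p}\).
\]
Since $\a-r\le 0$ and $N\ge n$, the middle term is at most $n^\a E_n(f)_{w,p}$; since $\nu\le N$ and $r-\a\ge 0$, the bound $N^{\a-r}\nu^{r-1}\le\nu^{\a-1}$ turns the last term into a part of $\sum_{\nu>n}\nu^{\a-1}E_\nu(f)_{w,p}$; and the monotonicity of $E_\nu(f)_{w,p}$ together with $\a>0$ yields $N^\a E_N(f)_{w,p}\le C(n^\a E_n(f)_{w,p}+\sum_{\nu=n+1}^N\nu^{\a-1}E_\nu(f)_{w,p})$. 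Combining these estimates with the bound from the range $h\ge 1/n$ produces precisely the right-hand side of (\ref{eqlem4pr1}). I expect the main obstacle to be the dyadic telescoping estimate for $\Vert\Dj^r(Q_N-Q_n)\Vert_{w,p}$ and the passage from the dyadic sum to the series $\sum_\nu\nu^{\a-1}E_\nu(f)_{w,p}$, where the exponents $\a$ and $r$ must be balanced carefully.
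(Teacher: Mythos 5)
Your argument is correct, and it uses exactly the same ingredients as the paper (Jackson's inequality of Lemma~\ref{lem01}, the Stechkin--Nikolskii/Bernstein inequality of Lemma~\ref{lem1}, the contraction property (\ref{eqOper1}), dyadic telescoping of best-approximation polynomials, and monotonicity of $E_\nu(f)_{w,p}$), but the decomposition is organized in the transposed order. The paper writes $f-P_n=(P_{2^m}-P_n)+\sum_{\nu\ge m}(P_{2^{\nu+1}}-P_{2^\nu})$ once and for all, bounds the full H\"older seminorm of each dyadic block (splitting the supremum over $h$ at that block's own scale $2^{-\nu}$), and then sums the seminorms; you instead fix $h$ first, truncate at $N=\lfloor 1/h\rfloor$ via $f-Q_n=(f-Q_N)+(Q_N-Q_n)$, treat $f-Q_N$ as a norm-small remainder and telescope only the polynomial part $Q_N-Q_n$. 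The two versions amount to summing the same double array over scales and blocks in opposite orders, and the exponent bookkeeping you carry out (using $\a\le r$ to absorb $N^{\a-r}n^r$ and $N^{\a-r}\nu^{r-1}$) lands on the same right-hand side. What your version buys is that you never need to take the seminorm of an infinite series, so the lower-semicontinuity/countable-subadditivity step that the paper uses implicitly in (\ref{eqlem4.1}) is avoided; what the paper's version buys is slightly leaner bookkeeping, since each block is estimated exactly once rather than re-entering the estimate for every $h$ below its scale. (Minor slip: you write $h^\a\asymp N^\a$ where you mean $h^\a\asymp N^{-\a}$, but the subsequent computation uses the correct relation.) Your lower bound, taking an arbitrary $P\in\mathcal{P}_{n-1}$ and then the infimum, is the same as the paper's up to the cosmetic choice of evaluating at the best H\"older approximant directly.
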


\begin{proof}
Let $P_n\in \mathcal{P}_{n-1}$, $n\in \N$, be the polynomials of the best approximation of $f\in H_{w,p}^{r,\a}$.
Then by Lemma~\ref{lem01}, we obtain the lower bound by
\begin{equation*}
\begin{split}
n^\a E_n(f)_{w,p} &\le C n^\a\sw_r(f-P_n,1/n)_{w,p} \\
&\le C\sup_{0<h\le 1/n}\frac{\sw_r(f-P_n,h)_{w,p}}{h^{\a}}\le C E_n(f)_{H_{w,p}^{r,\a}}.
\end{split}
 \end{equation*}

Let us prove the upper estimate in (\ref{eqlem4pr1}). Now, let $P_n \in \mathcal{P}_{n-1}$, $n\in\N$,  be the polynomials of the best approximation of $f\in L_{w,p}$.
Let $m\in \N$ such that $2^{m-1}\le n<2^m$.
Assuming that $\sum_{\nu=1}^\infty \nu^{\a-1}E_\nu(f)_{w,p}<\infty$, we can write
\begin{equation}\label{eqPPRREEDD}
  f=P_{2^m}+\sum_{\nu=m}^\infty U_{2^\nu}\quad \text{in}\quad L_{w,p},
\end{equation}
where $U_{2^\nu}=P_{2^{\nu+1}}-P_{2^\nu}$.
From (\ref{eqPPRREEDD}) we have
\begin{equation}\label{eqlem4.1}
  |f-P_n|_{H_{w,p}^{r,\a}}\le |P_{2^m}-P_n|_{H_{w,p}^{r,\a}}+
\sum_{\nu=m}^\infty |P_{2^{\nu+1}}-P_{2^\nu}|_{H_{w,p}^{r,\a}}=S_1+S_2.
\end{equation}
By Lemma~\ref{lem1} and (\ref{eqOper1}), we obtain
\begin{equation}\label{eqlem4.2}
\begin{split}
  S_1&\le \(\sup_{0<h<2^{-m}}+\sup_{h\ge 2^{-m}}\)\frac{\sw_r(P_{2^m}-P_n,h)_{w,p}}{h^{\a}}\\
     &\le C 2^{\a  m}\(\sw_r(P_{2^m}-P_n,2^{-m})_{w,p}+\Vert P_{2^m}-P_n\Vert_{w,p}\)\\
     &\le C 2^{\a  m}\Vert P_{2^m}-P_n\Vert_{w,p}\le C n^{\a}E_n(f)_{w,p}.
\end{split}
\end{equation}
Again, by Lemma~\ref{lem1}  and (\ref{eqOper1}), we get
\begin{equation}\label{eqcor1pr1}
\begin{split}
S_2&\le \sum_{\nu=m}^\infty \sup_{0<h\le 2^{-\nu-1}}h^{-\a } \sw_r(U_{2^\nu},h)_{w,p}+
\sum_{\nu=m}^\infty \sup_{h\ge 2^{-\nu-1}}h^{-\a }\sw_r(U_{2^\nu},h)_{w,p}\\
&\le C\(\sum_{\nu=m}^\infty 2^{\a  \nu}\sw_r(U_{2^\nu},2^{-\nu-1})_{w,p}+
\sum_{\nu=m}^\infty 2^{\a  \nu}\sup_{h\ge 2^{-\nu-1}}\sw_r(U_{2^\nu},h)_{w,p}\)\\
&\le C\sum_{\nu=m}^\infty 2^{\a  \nu}\Vert  U_{2^\nu}\Vert_{w,p}\le C\sum_{\nu=m}^\infty 2^{\a \nu}E_{2^\nu}(f)_{w,p}\le C\sum_{\mu=n}^\infty \mu^{\a -1}E_\mu(f)_{w,p}.
\end{split}
\end{equation}

Thus, combining (\ref{eqlem4.1})--(\ref{eqcor1pr1}), we obtain the upper estimate in (\ref{eqlem4pr1}).
\end{proof}

\begin{corollary}
  Let $f\in {H_{w,p}^{r,\a}}$, $1\le p\le \infty$, $0<\a\le r$, and $\g> 0$. Then the following assertions are equivalent:

  \medskip

  $(i)$ $E_n(f)_{H_{w,p}^{r,\a}}=\mathcal{O}(n^{-\g})$ as $n\to\infty$,

  \medskip

  $(ii)$ $E_n(f)_{w,p}=\mathcal{O}(n^{-\g-\a})$ as $n\to\infty$.
\end{corollary}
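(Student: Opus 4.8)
The plan is to read off both implications directly from the two-sided estimate \eqref{eqlem4pr1} of Theorem~\ref{lem4}, which already packages all the analytic work; the corollary is then a matter of feeding the assumed decay rates into the left- and right-hand bounds and checking an elementary tail-sum estimate. I would treat the two implications separately, since one uses only the lower bound and the other only the upper bound.

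For the implication $(i)\Rightarrow(ii)$ I would invoke the lower bound $C^{-1}n^\a E_n(f)_{w,p}\le E_n(f)_{H_{w,p}^{r,\a}}$. Under hypothesis $(i)$ the right-hand side is $\mathcal{O}(n^{-\g})$, so $n^\a E_n(f)_{w,p}=\mathcal{O}(n^{-\g})$, which is exactly $E_n(f)_{w,p}=\mathcal{O}(n^{-\g-\a})$. This direction requires nothing beyond the inequality itself.

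For $(ii)\Rightarrow(i)$ I would use the upper bound and substitute $E_\nu(f)_{w,p}=\mathcal{O}(\nu^{-\g-\a})$ into its two terms. The first term gives $n^\a E_n(f)_{w,p}=\mathcal{O}(n^\a\cdot n^{-\g-\a})=\mathcal{O}(n^{-\g})$. For the second term one obtains
\begin{equation*}
\sum_{\nu=n+1}^\infty \nu^{\a-1}E_\nu(f)_{w,p}=\mathcal{O}\Bigl(\sum_{\nu=n+1}^\infty \nu^{\a-1}\nu^{-\g-\a}\Bigr)=\mathcal{O}\Bigl(\sum_{\nu=n+1}^\infty \nu^{-\g-1}\Bigr).
\end{equation*}
Here the assumption $\g>0$ is used twice: it guarantees that the series in \eqref{eqlem4pr1} converges, so the upper bound is applicable, and it yields the tail estimate $\sum_{\nu=n+1}^\infty \nu^{-\g-1}\le \int_n^\infty x^{-\g-1}\,\mathrm{d}x=\g^{-1}n^{-\g}=\mathcal{O}(n^{-\g})$. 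Combining the two terms gives $E_n(f)_{H_{w,p}^{r,\a}}=\mathcal{O}(n^{-\g})$.

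Since the genuine analytic content already resides in Theorem~\ref{lem4}, the only point that needs care is the comparison of the tail sum with the integral $\int_n^\infty x^{-\g-1}\,\mathrm{d}x$, and the observation that $(ii)$ together with $\g>0$ forces the defining series of the upper bound to converge so that \eqref{eqlem4pr1} may legitimately be applied. Both are routine, so I expect no real obstacle beyond bookkeeping with the constants hidden in $\mathcal{O}$.
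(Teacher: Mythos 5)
Your proposal is correct and is precisely the intended argument: the paper states this corollary without proof as an immediate consequence of the two-sided estimate (\ref{eqlem4pr1}) of Theorem~\ref{lem4}, reading $(i)\Rightarrow(ii)$ off the lower bound and $(ii)\Rightarrow(i)$ off the upper bound together with the tail estimate $\sum_{\nu>n}\nu^{-\g-1}=\mathcal{O}(n^{-\g})$. Your added remark that $\g>0$ also guarantees convergence of the series, so that the upper bound applies, is exactly the right point of care.
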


By using the upper inequality in (\ref{eqlem4pr1}) and Lemma~\ref{lem01}, one can prove that under the
conditions of Theorem~\ref{lem4} for any $k>0$
\begin{equation}\label{eqJackHold1}
  E_n(f)_{H_{w,p}^{r,\a}}\le C\int_0^{1/n}\frac{\sw_k(f,t)_{w,p}}{t^{\a+1}}\mathrm{d}t,\quad n\in\N,
\end{equation}
where $C$ is a constant independent of $f$ and $n$ (see also Theorem~\ref{th3dt} below).

In the case $\a<r$, one can obtain a sharper estimate by using
the following modulus of smoothness
\begin{equation*}
\tilde{\theta}_{k,\a}(f,t)_{w,p}=\sup_{0<h\le t}\frac{\sw_k(f,h)_{w,p}}{h^\a},\quad 0<\a\le k.
\end{equation*}
The similar moduli of smoothness have initially been used for the investigation of approximation in H\"older spaces (see, for example,~\cite{BJ} and~\cite{BR}).

We prove the following Jackson-type  theorem in terms of $\widetilde{\theta}_{k,\a}(f,h)_{w,p}$.

\begin{theorem}\label{cor2}
   Let $f\in {H_{w,p}^{r,\a}}$, $1\le p\le\infty$, $0<\a< \min(r,k)$ or $0<\alpha=k=r$.
Then
\begin{equation}\label{eqR2}
  E_n(f)_{H_{w,p}^{r,\a}}\le C \widetilde{\theta}_{k,\a}\(f,\frac1n\)_{w,p},\quad n\in\N,
\end{equation}
where $C$ is a constant independent of $n$ and $f$.
\end{theorem}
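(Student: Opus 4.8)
The plan is to bound the H\"older-norm error by estimating $\Vert f-P_n\Vert_{H_{w,p}^{k,\a}}$, where $P_n\in\mathcal P_{n-1}$ is a polynomial of best $L_{w,p}$-approximation of $f$, and then to split the governing supremum at the scale $h=1/n$. First I would reduce the problem to the order $k$: for $0<\a<\min(r,k)$ Lemma~\ref{lemHolder} gives the equivalence of the $H_{w,p}^{r,\a}$ and $H_{w,p}^{k,\a}$ norms (and in the boundary case $\a=r=k$ the two seminorms literally coincide), so that $E_n(f)_{H_{w,p}^{r,\a}}\le C\,E_n(f)_{H_{w,p}^{k,\a}}$ and it suffices to prove the bound with $r$ replaced by $k$. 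Writing $\Vert f-P_n\Vert_{H_{w,p}^{k,\a}}=\Vert f-P_n\Vert_{w,p}+\sup_{h>0}h^{-\a}\sw_k(f-P_n,h)_{w,p}$ and cutting the supremum into the ranges $0<h\le 1/n$ and $h>1/n$ organizes the whole argument.

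Second, I would dispose of the two easy pieces. For the $L_{w,p}$ term, the Jackson inequality (Lemma~\ref{lem01}) gives $\Vert f-P_n\Vert_{w,p}=E_n(f)_{w,p}\le C\,\sw_k(f,1/n)_{w,p}\le C\,n^{-\a}\widetilde{\theta}_{k,\a}(f,1/n)_{w,p}$. For $h>1/n$, boundedness of the translation operator~(\ref{eqOper1}) yields $\sw_k(f-P_n,h)_{w,p}\le C\Vert f-P_n\Vert_{w,p}$, hence $h^{-\a}\sw_k(f-P_n,h)_{w,p}\le C\,n^\a E_n(f)_{w,p}\le C\,\widetilde{\theta}_{k,\a}(f,1/n)_{w,p}$ after one more application of Jackson's inequality. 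Thus the entire contribution of the large scales is already controlled by $\widetilde{\theta}_{k,\a}(f,1/n)_{w,p}$.

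Third, and this is the heart of the matter, I would treat the small scales $0<h\le 1/n$ through the triangle inequality $\sw_k(f-P_n,h)_{w,p}\le \sw_k(f,h)_{w,p}+\sw_k(P_n,h)_{w,p}$. The first term is immediate, $h^{-\a}\sw_k(f,h)_{w,p}\le \widetilde{\theta}_{k,\a}(f,1/n)_{w,p}$, by the very definition of the modified modulus. For the polynomial term I would invoke the Stechkin--Nikolskii inequality (Lemma~\ref{lem1}): for $0<h<\pi/n$ it gives $\sw_k(P_n,h)_{w,p}\le C\,h^k\Vert\Dj^k P_n\Vert_{w,p}$, while taking $h=1/n$ in the same equivalence, together with the second assertion of Lemma~\ref{lem1} (applicable since $P_n$ is a best approximant), bounds $\Vert\Dj^k P_n\Vert_{w,p}\le C\,n^k\sw_k(f,1/n)_{w,p}$. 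Combining these,
\begin{align*}
h^{-\a}\sw_k(P_n,h)_{w,p}&\le C\,h^{k-\a}\Vert\Dj^k P_n\Vert_{w,p}\le C\,n^{\a-k}\cdot n^k\sw_k(f,1/n)_{w,p}\\
&=C\,n^\a\sw_k(f,1/n)_{w,p}\le C\,\widetilde{\theta}_{k,\a}\Big(f,\tfrac1n\Big)_{w,p},
\end{align*}
where the monotonicity step $h^{k-\a}\le n^{\a-k}$ uses precisely the hypothesis $k\ge\a$; in the boundary case $\a=k$ the factor $h^{k-\a}$ disappears and the same conclusion persists. Collecting the three contributions finishes the proof.

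The delicate point to get right is exactly this small-$h$ estimate. One cannot afford to pass through the series bound of Theorem~\ref{lem4} and apply Jackson's inequality term by term, since the resulting tail $\sum_{\nu}\nu^{\a-1}\sw_k(f,1/\nu)_{w,p}\le \widetilde{\theta}_{k,\a}(f,1/n)_{w,p}\sum_\nu \nu^{-1}$ diverges and only reproduces the weaker bound~(\ref{eqJackHold1}). The gain comes from estimating the full H\"older seminorm of $f-P_n$ directly and using the Bernstein-type factor $h^{k-\a}$ to absorb the singular weight $h^{-\a}$ at small scales; the condition $\a<k$ (or $\a=k$) is exactly what makes this absorption possible.
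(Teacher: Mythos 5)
Your proposal is correct and follows essentially the same route as the paper: reduce to order $k$ via Lemma~\ref{lemHolder}, take $P_n$ a best $L_{w,p}$-approximant, split the seminorm at $h=1/n$, handle $h\ge 1/n$ with \eqref{eqOper1} and Jackson's inequality, and absorb the weight $h^{-\a}$ at small scales through the Stechkin--Nikolskii equivalence of Lemma~\ref{lem1}. The only cosmetic difference is that you bound $\Vert\Dj^k P_n\Vert_{w,p}$ via the second assertion of Lemma~\ref{lem1}, whereas the paper passes through $\sw_k(P_n,1/n)_{w,p}\le C\bigl(\Vert f-P_n\Vert_{w,p}+\sw_k(f,1/n)_{w,p}\bigr)$ and Jackson; both yield the same estimate.
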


\begin{proof}
First let $\a<\min(r,k)$ and $P_n\in \mathcal{P}_{n-1}$, $n\in \N$, be polynomials of the best approximation of $f\in L_{w,p}$.
By Lemma~\ref{lem01} and Lemma~\ref{lemHolder},  it suffices to find an estimation for $|f-P_n|_{H_{w,p}^{k,\a}}$.
We have
\begin{equation}\label{eqcor2.1}
  |f-P_n|_{H_{w,p}^{k,\a}}\le \(\sup_{0<h<1/n}+\sup_{h\ge 1/n}\)\frac{\sw_k(f-P_n,h)_{w,p}}{h^\a}=S_1+S_2.
\end{equation}
By (\ref{eqOper1}) and Lemma~\ref{lem01}, we get
\begin{equation}\label{eqcor2.2}
  S_2\le Cn^\a \Vert f-P_n\Vert_{w,p}\le C n^\a\sw_k(f,1/n)_{w,p}\le C\widetilde{\theta}_{k,\a}(f,1/n)_{w,p}.
\end{equation}
We also obtain
\begin{equation}\label{eqcor2.3}
 \begin{split}
   S_1&\le \sup_{0<h<1/n}\frac{\sw_k(f,h)_{w,p}}{h^\a}+\sup_{0<h<1/n}\frac{\sw_k(P_n,h)_{w,p}}{h^\a}\\
   &\le \widetilde{\theta}_{k,\a}(f,1/n)_{w,p}+\sup_{0<h<1/n}\frac{\sw_k(P_n,h)_{w,p}}{h^\a}.
 \end{split}
\end{equation}
To estimate the last term in (\ref{eqcor2.3}) we use Lemma~\ref{lem1}, (\ref{eqOper1}), and Lemma~\ref{lem01}:
\begin{equation}\label{eqcor2.4}
\begin{split}
  \sup_{0<h<1/n}\frac{\sw_k(P_n,h)_{w,p}}{h^\a}&\le C n^\a \sw_k(P_n,1/n)_{w,p}\le C n^\a\(\Vert f-P_n\Vert_{w,p}+
  \sw_k(f,1/n)_{w,p}\)\\
&\le C n^\a \sw_k(f,1/n)_{w,p}\le n^\a \widetilde{\theta}_{k,\a}(f,1/n)_{w,p}.
\end{split}
\end{equation}

Thus, combining (\ref{eqcor2.1})--(\ref{eqcor2.4}), we prove the theorem in the case $\a<\min(r,k)$.
One can use the same scheme to prove the theorem in the case $k=r=\a$.

\end{proof}

By using the standard scheme we also obtain the following inverse result:

\begin{theorem}\label{thR4}
  Let $f\in {H_{w,p}^{r,\a}}$, $1\le p\le\infty$, $0<\a< \min(r,k)$ or $0<\alpha=k=r$.
  Then
\begin{equation}\label{eqR3}
  \widetilde{\theta}_{k,\a}\(f,\frac 1n\)_{w,p}\le \frac{C}{n^{k-\a}}\sum_{\nu=1}^{n} \nu^{k-\a-1}
  E_\nu(f)_{H_{w,p}^{r,\a}},\quad n\in\N,
\end{equation}
where $C$ is a constant independent of $n$ and $f$.
\end{theorem}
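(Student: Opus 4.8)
The plan is to run the standard Bernstein-type (inverse) argument adapted to the H\"older setting: decompose $f$ dyadically into polynomials of best $H_{w,p}^{r,\a}$-approximation and control everything through the Stechkin--Nikolskii equivalence of Lemma~\ref{lem1}. Fix $n$ and choose $m\in\N$ with $2^{m-1}<n\le 2^m$, and for each $j$ let $Q_j\in\mathcal{P}_{2^j-1}$ be a polynomial of best approximation of $f$ in $H_{w,p}^{r,\a}$, so that $\Vert f-Q_j\Vert_{H_{w,p}^{r,\a}}=E_{2^j}(f)_{H_{w,p}^{r,\a}}$ (existence is automatic since $\mathcal{P}_{2^j-1}$ is finite-dimensional). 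Since $0<\a<k$, Lemma~\ref{lemHolder} lets me pass freely between the seminorms $|\cdot|_{H_{w,p}^{k,\a}}$ and $|\cdot|_{H_{w,p}^{r,\a}}$, so $\sw_k$-moduli may be estimated against the $H_{w,p}^{r,\a}$-errors.

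For $0<h\le 1/n$ I would split
\begin{equation*}
  \frac{\sw_k(f,h)_{w,p}}{h^\a}\le \frac{\sw_k(f-Q_m,h)_{w,p}}{h^\a}+\frac{\sw_k(Q_m,h)_{w,p}}{h^\a}
\end{equation*}
and take the supremum over $0<h\le 1/n$. The first term is at most $|f-Q_m|_{H_{w,p}^{k,\a}}\le C\Vert f-Q_m\Vert_{H_{w,p}^{r,\a}}=C\,E_{2^m}(f)_{H_{w,p}^{r,\a}}\le C\,E_n(f)_{H_{w,p}^{r,\a}}$, the last step using $2^m\ge n$ and the monotonicity of $E_\nu$.

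For the polynomial term, since $Q_m\in\mathcal{P}_{2^m}$ and $h\le 1/n<\pi/2^m$, Lemma~\ref{lem1} gives $\sw_k(Q_m,h)_{w,p}\asymp h^k\Vert\Dj^k Q_m\Vert_{w,p}$, so $h^{-\a}\sw_k(Q_m,h)_{w,p}\asymp h^{k-\a}\Vert\Dj^k Q_m\Vert_{w,p}$; as $k>\a$ this is nondecreasing in $h$, whence the supremum is attained at $h=1/n$ and equals (up to constants) $n^{-(k-\a)}\Vert\Dj^k Q_m\Vert_{w,p}$. I then telescope $Q_m=\sum_{j=1}^m(Q_j-Q_{j-1})$, noting $Q_0$ is constant and is annihilated by $\Dj^k$. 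For each difference $Q_j-Q_{j-1}\in\mathcal{P}_{2^j}$, Lemma~\ref{lem1} with $h=2^{-j}$ gives $\Vert\Dj^k(Q_j-Q_{j-1})\Vert_{w,p}\asymp 2^{jk}\sw_k(Q_j-Q_{j-1},2^{-j})_{w,p}$, and the triangle inequality together with $\sw_k(f-Q_i,2^{-j})_{w,p}\le 2^{-j\a}|f-Q_i|_{H_{w,p}^{k,\a}}\le C2^{-j\a}E_{2^i}(f)_{H_{w,p}^{r,\a}}$ and $E_{2^j}\le E_{2^{j-1}}$ yields
\begin{equation*}
  \Vert\Dj^k(Q_j-Q_{j-1})\Vert_{w,p}\le C\,2^{j(k-\a)}E_{2^{j-1}}(f)_{H_{w,p}^{r,\a}}.
\end{equation*}

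Collecting the estimates leads to
\begin{equation*}
  \widetilde{\theta}_{k,\a}\(f,\frac1n\)_{w,p}\le C\,E_n(f)_{H_{w,p}^{r,\a}}+C\,n^{-(k-\a)}\sum_{j=1}^m 2^{j(k-\a)}E_{2^{j-1}}(f)_{H_{w,p}^{r,\a}},
\end{equation*}
and the final step is the routine conversion of the dyadic sum: since $k-\a-1>-1$ and $E_\nu$ decreases, each block satisfies $2^{j(k-\a)}E_{2^{j-1}}\le C\sum_{\nu}\nu^{k-\a-1}E_\nu$ over a comparable range of indices, so the dyadic sum is dominated by $\sum_{\nu=1}^{2^{m-1}}\nu^{k-\a-1}E_\nu\le\sum_{\nu=1}^n\nu^{k-\a-1}E_\nu$, while the leading term $E_n(f)_{H_{w,p}^{r,\a}}$ is absorbed because $\sum_{\nu=1}^n\nu^{k-\a-1}\asymp n^{k-\a}$ and $E_\nu\ge E_n$ for $\nu\le n$; the borderline case $\a=k=r$ is identical, with $n^{k-\a}=1$ and $\sum_\nu\nu^{-1}\asymp\log n$. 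I expect the only genuinely delicate point to be the interchange of the modulus orders $k$ and $r$ through Lemma~\ref{lemHolder}, combined with the monotonicity in $h$ that forces the supremum in the polynomial term to the endpoint $h=1/n$; both hinge on the strict inequality $\a<k$.
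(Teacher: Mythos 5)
Your proof is correct and follows essentially the same route as the paper's: dyadic decomposition via best $H_{w,p}^{r,\a}$-approximants, splitting $\widetilde{\theta}_{k,\a}$ into the remainder term (controlled by Lemma~\ref{lemHolder} and monotonicity of $E_\nu$) and the polynomial term, which is reduced to $n^{-(k-\a)}\Vert\Dj^k Q_m\Vert_{w,p}$ and then telescoped using the Stechkin--Nikolskii equivalence of Lemma~\ref{lem1}, before converting the dyadic sum to $\sum_{\nu\le n}\nu^{k-\a-1}E_\nu$. The only cosmetic difference is that the paper passes through the $K$-functional $\widetilde{K}_k$ at the intermediate step, which by Lemma~\ref{lemeqF} is equivalent to your direct use of Lemma~\ref{lem1}.
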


\begin{proof}
As in Theorem~\ref{cor2}, we consider only the case $\a<\min(r,k)$.
Let $P_n\in \mathcal{P}_{n-1}$, $n\in \N$, be the polynomials of the best approximation of $f\in H_{w,p}^{r,\a}$.
For any $m\in \N\cup\{0\}$ we get
\begin{equation}\label{eq.thR4.1}
  \st_{k,\a}(f,1/n)_{w,p}\le \st_{k,\a}(f-P_{2^{m+1}},1/n)_{w,p}+\st_{k,\a}(P_{2^{m+1}},1/n)_{w,p}.
\end{equation}
By the definition of the H\"older space and Lemma~\ref{lemHolder}, we obtain
\begin{equation}\label{eq.thR4.2}
 \st_{k,\a}(f-P_{2^{m+1}},1/n)_{w,p}\le C|f-P_{2^{m+1}}|_{H_{w,p}^{r,\a}}\le CE_{2^{m+1}}(f)_{H_{w,p}^{r,\a}}.
\end{equation}
By Lemma~\ref{lemeqF}, Lemma~\ref{lem1}, and Lemma~\ref{lemHolder}, we gain
\begin{equation}\label{eq.thR4.3}
  \begin{split}
    &\st_{k,\a}(P_{2^{m+1}},1/n)_{w,p}\le C\sup_{0<h\le 1/n}\frac{\widetilde{K}_k(P_{2^{m+1}},h)_{w,p}}{h^\a}\le \frac{C}{n^{k-\a}}\Vert \Dj^k P_{2^{m+1}}\Vert_{w,p}\\
    &\le \frac{C}{n^{k-\a}} \(\Vert \Dj^k P_2-\Dj^k P_1\Vert_{w,p}+\sum_{\nu=1}^m \Vert \Dj^k P_{2^{\nu+1}}-\Dj^k P_{2^\nu}\Vert_{w,p}\)\\
    &\le \frac{C}{n^{k-\a}}\(\sw_k(P_2-P_1,1)_{w,p}+\sum_{\nu=1}^m 2^{\nu k}\sw_k\(P_{2^{\nu+1}}-P_{2^\nu},2^{-(\nu+1)}\)_{w,p}\)\\
    &\le \frac{C}{n^{k-\a}} \( \Vert P_2-P_1\Vert_{H_{w,p}^{r,\a}} +\sum_{\nu=1}^m 2^{\nu (k-\a)}  \Vert P_{2^{\nu+1}}-P_{2^\nu}  \Vert_{H_{w,p}^{r,\a}}  \)  \\
    &\le \frac{C}{n^{k-\a}}\(E_1(f)_{H_{w,p}^{r,\a}}+\sum_{\nu=1}^m 2^{\nu(k-\a)}E_{2^\nu}(f)_{H_{w,p}^{r,\a}} \).
  \end{split}
\end{equation}
Since $2^{\nu(k-\a)}E_{2^\nu}(f)_{H_{w,p}^{r,\a}}\le 2\sum_{\mu=2^{\nu-1}+1}^{2^\nu} \mu^{k-\a-1}E_\mu(f)_{H_{w,p}^{r,\a}}$, $\nu\ge 1$, we obtain from (\ref{eq.thR4.3}) that
\begin{equation}\label{eq.thR4.4}
\st_{k,\a}(P_{2^{m+1}},1/n)_{w,p}\le  \frac{C}{n^{k-\a}} \(E_1(f)_{H_{w,p}^{k,\a}}+\sum_{\mu=2}^{2^m} \mu^{k-\a-1}E_\mu(f)_{H_{w,p}^{r,\a}}\).
\end{equation}
Choose $m$ such that $2^m\le n<2^{m+1}$. Then (\ref{eq.thR4.1}), (\ref{eq.thR4.2}), and (\ref{eq.thR4.4}) yield (\ref{eqR3}).
\end{proof}


Now let us consider the problem concerning the sharp order of decrease of the best approximation in the spaces $H_{w,p}^{r,\a}$.

\begin{theorem}\label{th3KP}
                    Let $f\in {H_{w,p}^{r,\a}}$, $1\le p\le\infty$, $0<\a< r$, and $s\ge \a$.
Then the following assertions are equivalent:

\noindent $(i)$ there exists a constant $L>0$ such that
                     \begin{equation}\label{rathore1P}
                           \widetilde{\t}_{s,\a}\bigg(f,\frac 1n\bigg)_{w,p}\le LE_{n}(f)_{H_{w,p}^{r,\a}},\quad n\in\mathbb{N},
                     \end{equation}
$(ii)$ for  some $k>s$ there exists a constant $M>0$ such that
                     \begin{equation}\label{rathore2P}
                            \widetilde{\t}_{s,\a}(f,h)_{w,p}\le M \widetilde{\t}_{k,\a}(f,h)_{w,p},\quad h>0.
                     \end{equation}
\end{theorem}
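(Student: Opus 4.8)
The plan is to derive both implications from the Jackson-type estimate (Theorem~\ref{cor2}) and the inverse estimate (Theorem~\ref{thR4}) already at hand, together with four elementary properties of the moduli $\widetilde{\theta}_{m,\a}(f,\cdot)_{w,p}$ that I would record first: (a) monotonicity, $\widetilde{\theta}_{m,\a}(f,h)_{w,p}$ is nondecreasing in $h$; (b) the doubling property $\widetilde{\theta}_{m,\a}(f,2h)_{w,p}\le C\widetilde{\theta}_{m,\a}(f,h)_{w,p}$, which follows from Lemma~\ref{lemeqF} and the scaling $\widetilde{K}_m(f,2h)_{w,p}\le 2^m\widetilde{K}_m(f,h)_{w,p}$; (c) the free comparison $\widetilde{\theta}_{k,\a}(f,h)_{w,p}\le 2^{k-s}\widetilde{\theta}_{s,\a}(f,h)_{w,p}$ for $k>s$, coming from $\sw_k(f,h)_{w,p}\le 2^{k-s}\sw_s(f,h)_{w,p}$; and (d) the growth bound $\widetilde{\theta}_{m,\a}(f,u)_{w,p}\le C(u/h)^{m-\a}\widetilde{\theta}_{m,\a}(f,h)_{w,p}$ for $u\ge h$, obtained from the $K$-functional scaling $\widetilde{K}_m(f,\l h)_{w,p}\le \l^m\widetilde{K}_m(f,h)_{w,p}$, $\l\ge1$, and Lemma~\ref{lemeqF}. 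Throughout I abbreviate $\mathcal{E}_n=E_n(f)_{H_{w,p}^{r,\a}}$ and use that $\{\mathcal{E}_n\}$ and $\{\widetilde{\theta}_{m,\a}(f,1/n)_{w,p}\}$ are nonincreasing.

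For the implication $(i)\Rightarrow(ii)$, which I expect to be routine, I would fix any $k>s$ (so that $\a<\min(r,k)$) and chain $(i)$ with Theorem~\ref{cor2}: $\widetilde{\theta}_{s,\a}(f,1/n)_{w,p}\le L\mathcal{E}_n\le C\widetilde{\theta}_{k,\a}(f,1/n)_{w,p}$. This is exactly $(ii)$ on the discrete net $h=1/n$. To pass to arbitrary $h>0$, given $h$ I choose $n$ with $1/(n+1)<h\le 1/n$, use (a) to write $\widetilde{\theta}_{s,\a}(f,h)_{w,p}\le\widetilde{\theta}_{s,\a}(f,1/n)_{w,p}$, and use (b) together with $1/n<2h$ to write $\widetilde{\theta}_{k,\a}(f,1/n)_{w,p}\le\widetilde{\theta}_{k,\a}(f,2h)_{w,p}\le C\widetilde{\theta}_{k,\a}(f,h)_{w,p}$; the range $h\ge 1$ is covered by the convention for $h\ge\pi$ and monotonicity.

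For $(ii)\Rightarrow(i)$ I would first reduce to a discrete regularity statement. Combining $(ii)$ at $h=1/n$ with Theorem~\ref{thR4} for the exponent $k$ gives
\[
\widetilde{\theta}_{s,\a}(f,1/n)_{w,p}\le M\widetilde{\theta}_{k,\a}(f,1/n)_{w,p}\le \frac{C}{n^{k-\a}}\sum_{\nu=1}^{n}\nu^{k-\a-1}\mathcal{E}_\nu,
\]
so it suffices to establish the regularity
\[
\mathcal{E}_\nu\le C\(\tfrac n\nu\)^{s-\a}\mathcal{E}_n,\qquad \nu\le n,
\]
that is, that $\{\nu^{s-\a}\mathcal{E}_\nu\}$ is almost increasing. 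Indeed, granting this, one has $\nu^{k-\a-1}\mathcal{E}_\nu\le C\nu^{k-s-1}n^{s-\a}\mathcal{E}_n$ and $\sum_{\nu\le n}\nu^{k-s-1}\le Cn^{k-s}$, where the strict gap $k-s>0$ is precisely what prevents a logarithmic loss; the sum is then at most $Cn^{k-\a}\mathcal{E}_n$, which yields $(i)$.

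The hard part will be proving this regularity from $(ii)$ alone. I expect the naive route to be circular: bounding $\mathcal{E}_\nu\le C\widetilde{\theta}_{k,\a}(f,1/\nu)_{w,p}\le C'\widetilde{\theta}_{s,\a}(f,1/\nu)_{w,p}$ by the direct theorem and (c), and then applying the growth bound (d) for the $s$-modulus, returns a bound in terms of $\widetilde{\theta}_{s,\a}(f,1/n)_{w,p}$, whose control by $\mathcal{E}_n$ is exactly $(i)$ itself. To break the circularity I would descend to the unweighted errors $e_\nu=E_\nu(f)_{w,p}$: Theorem~\ref{lem4} supplies the independent information $\mathcal{E}_n\ge C^{-1}\sup_{N\ge n}N^\a e_N$, while Jackson's theorem (Lemma~\ref{lem01}) gives the independent lower bound $\widetilde{\theta}_{s,\a}(f,1/n)_{w,p}\ge c\sup_{N\ge n}N^\a e_N$. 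Feeding these, the realization of $\widetilde{K}_k$ by best $L_{w,p}$-approximants (through the Stechkin--Nikolskii inequality, Lemma~\ref{lem1}), and the improved growth exponent $s-\a$ forced by $(ii)$ into the inverse estimate should produce the desired decay of $\{\mathcal{E}_\nu\}$. I expect this non-circular derivation of the regularity to be the main obstacle, with the improved exponent $s-\a$ (rather than the free $k-\a$) being the mechanism that makes the weighted sum above telescope without loss; the endpoint $s=\a$ is handled by the same computation with exponent $s-\a=0$, once one checks that $\widetilde{\theta}_{\a,\a}(f,\cdot)_{w,p}$ is finite for the functions in question.
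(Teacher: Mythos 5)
Your $(i)\Rightarrow(ii)$ direction is correct and coincides with the paper's (an immediate consequence of Theorem~\ref{cor2} plus the elementary properties (a)--(c), which are all valid). The reduction you make in the $(ii)\Rightarrow(i)$ direction is also sound as far as it goes: if $\{\nu^{s-\a}E_\nu(f)_{H_{w,p}^{r,\a}}\}$ were almost increasing, then combining $(ii)$ with Theorem~\ref{thR4} would indeed give $(i)$, since $\sum_{\nu\le n}\nu^{k-s-1}\le Cn^{k-s}$ because $k>s$. The problem is that this regularity statement is never proved, and you yourself flag it as ``the main obstacle.'' It is in fact essentially equivalent to the conclusion $(i)$: granting the theorem, one has $E_\nu(f)_{H_{w,p}^{r,\a}}\asymp\widetilde{\t}_{k,\a}(f,1/\nu)_{w,p}$ and the regularity follows from the improved growth exponent; without the theorem it is exactly the missing lower bound $E_n(f)_{H_{w,p}^{r,\a}}\ge c\,\widetilde{\t}_{k,\a}(f,1/n)_{w,p}$ in disguise. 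Your proposed escape route does not close this circle: Theorem~\ref{lem4} and Lemma~\ref{lem01} give $E_n(f)_{H_{w,p}^{r,\a}}\ge C^{-1}\sup_{N\ge n}N^\a E_N(f)_{w,p}$ and $\widetilde{\t}_{s,\a}(f,1/n)_{w,p}\ge c\sup_{N\ge n}N^\a E_N(f)_{w,p}$, but these are two \emph{lower} bounds by the same quantity; to conclude $(i)$ you would need an \emph{upper} bound of $\widetilde{\t}_{s,\a}(f,1/n)_{w,p}$ by $\sup_{N\ge n}N^\a E_N(f)_{w,p}$, and the available upper estimate (the right-hand side of \eqref{eqlem4pr1}) involves the tail sum $\sum_{\nu>n}\nu^{\a-1}E_\nu(f)_{w,p}$, which is not controlled by that supremum.

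The ingredient your proposal is missing is the classical Stechkin--Rathore dilation trick that the paper uses. From $(ii)$ together with \eqref{svmod1T} and \eqref{svmod2T} one first derives the improved growth bound $\widetilde{\t}_{k,\a}(f,n\d)_{w,p}\le CMn^{s-\a}\widetilde{\t}_{k,\a}(f,\d)_{w,p}$, and then, via Theorem~\ref{cor2}, the estimate $n^{-(k-\a)}\sum_{\nu=1}^n\nu^{k-\a-1}E_\nu(f)_{H_{w,p}^{r,\a}}\le CM\widetilde{\t}_{k,\a}(f,1/n)_{w,p}$ --- note the right-hand side is the modulus, not $E_n(f)_{H_{w,p}^{r,\a}}$, so no circularity arises. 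The key step is then to apply Theorem~\ref{thR4} at scale $1/(mn)$, split the sum at $\nu=n$, bound the lower block by the previous estimate, isolate $\sum_{\nu=n+1}^{mn}\nu^{k-\a-1}E_\nu(f)_{H_{w,p}^{r,\a}}\le E_n(f)_{H_{w,p}^{r,\a}}\sum_{\nu=n+1}^{mn}\nu^{k-\a-1}$ by monotonicity, and choose $m$ so large that $Cm^{k-s}-M>0$ (here the strict gap $k>s$ is used again). This yields $E_n(f)_{H_{w,p}^{r,\a}}\ge C\widetilde{\t}_{k,\a}(f,1/n)_{w,p}$, and $(i)$ follows from $(ii)$. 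Without this step (or an equivalent substitute) your argument does not constitute a proof of the implication $(ii)\Rightarrow(i)$.
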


\begin{proof}
To prove this theorem we follow the scheme of the proof of the corresponding result in
\cite{R}, see also~\cite[Ch. 4]{TB}.
For this purpose, we need  Theorem~\ref{cor2}, Theorem~\ref{thR4}, and the following inequalities:
\begin{equation}\label{svmod1T}
  \widetilde{\t}_{k,\a}(f,\d)_{w,p}\le 2^{k-r}\widetilde{\t}_{r,\a}(f,\d)_{w,p},\quad k>r>0,\quad \d>0,
\end{equation}
\begin{equation}\label{svmod2T}
  \widetilde{\t}_{k,\a}(f,n\d)_{w,p}\le C n^{k-\a}\widetilde{\t}_{k,\a}(f,\d)_{w,p},\quad n\in \N,\quad k,\d>0,
\end{equation}
where $C$ is a constant independent of $f$, $\d$, and $n$. These inequalities can easily be obtained from inequalities  (\ref{eqOper1}) and (\ref{eqTildeqKw}) (see, for example,~\cite[Ch. 2, \S 7]{DL} and \cite{Pl14}).


Let condition~(\ref{rathore2P}) be satisfied. Then from  (\ref{svmod1T}) and (\ref{svmod2T}) we get
\begin{equation}\label{rathoreqiP}
      \widetilde{\t}_{k,\a}(f,n\d)_{w,p}\le C  M n^{s-\a}\widetilde{\t}_{k,\a}(f,\d)_{w,p},\quad n\in\N,\quad \d>0.
\end{equation}
Therefore, $\widetilde{\t}_{k,\a}(f,\l\d)_{w,p}\le CM(1+\l)^{s-\a}\widetilde{\t}_{k,\a}(f,\d)_{w,p}$ for all $\d,\l>0$.

Let us prove that
\begin{equation}\label{rathInvP}
       \frac{1}{n^{k-\a}}\sum_{\nu=1}^n\nu^{k-\a-1}E_\nu(f)_{H_{w,p}^{r,\a}}\le
       C M \widetilde{\t}_{k,\a}\bigg(f,\frac 1n\bigg)_{w,p}.
\end{equation}
Indeed, by Theorem~\ref{cor2} and inequality (\ref{rathoreqiP}),
we obtain
\begin{equation*}
\begin{split}
      &\frac{1}{n^{k-\a}}\sum_{\nu=1}^n\nu^{k-\a-1}E_\nu(f)_{H_{w,p}^{r,\a}}\le
      \frac{C}{n^{k-\a}}\sum_{\nu=1}^n\nu^{k-\a-1} \widetilde{\t}_{k,\a}\left(f,\frac
      {1}{\nu}\right)_{w,p}\\
      &\le\frac{CM}{n^{k-s}} \widetilde{\t}_{k,\a}\left(f,\frac 1n
      \right)_{w,p}\sum_{\nu=1}^n\nu^{k-s-1}\le
      CM \widetilde{\t}_{k,\a}\left(f,\frac 1n\right)_{w,p}.
\end{split}
\end{equation*}
Next, by Theorem~\ref{thR4} and (\ref{rathInvP}), we get that for all $m, n\in\N$
\begin{equation}\label{eqRathorXXXP}
\begin{split}
       &\widetilde{\t}_{k,\a}\(f,\frac{1}{mn} \)_{w,p}\le
       \frac{C}{(mn)^{k-\a}}\sum_{\nu=1}^{mn}\nu^{k-\a-1}E_{\nu}(f)_{H_{w,p}^{r,\a}}\\
       &=\frac{C}{(mn)^{k-\a}}\(\sum_{\nu=n+1}^{mn}\nu^{k-\a-1}E_{\nu}(f)_{H_{w,p}^{r,\a}}+
      \sum_{\nu=1}^n\nu^{k-\a-1}E_{\nu}(f)_{H_{w,p}^{r,\a}} \)\\
       &\le  C\(\frac{1}{(mn)^{k-\a}}\sum_{\nu=n+1}^{mn}
      \nu^{k-\a-1}E_{\nu}(f)_{H_{w,p}^{r,\a}}^p+\frac{M}{m^{k-\a}}
      \widetilde{\t}_{k,\a}\left(f,\frac 1n\right)_{w,p}\).
\end{split}
\end{equation}
Inequality~(\ref{eqRathorXXXP}) implies that
\begin{equation*}
      \sum_{\nu=n+1}^{mn}\nu^{k-\a-1}E_{\nu}(f)_{H_{w,p}^{r,\a}}\ge
      \frac{(mn)^{k-\a}}{C}\widetilde{\t}_{k,\a}\left(f,\frac{1}{mn}\right)_{w,p}-M n^{k-\a}\widetilde{\t}_{k,\a}\left(f,\frac
     1n\right)_{w,p}
\end{equation*}
from which, by using the monotonicity of $E_n(f)_{H_{w,p}^{r,\a}}$ and
(\ref{rathoreqiP}), we derive
\begin{equation*}
      E_n(f)_{H_{w,p}^{r,\a}}\sum_{\nu=n+1}^{mn}\nu^{k-\a-1}\ge (C m^{k-s}-M)n^{k-\a}
      \widetilde{\t}_{k,\a}\left(f,\frac{1}{n} \right)_{w,p}.
\end{equation*}
Thus,  choosing $m$ appropriately, we can find a positive constant  $C=C_{k,s,M}$ such that
\begin{equation*}
      E_n(f)_{H_{w,p}^{r,\a}}\ge C \widetilde{\t}_{k,\a}\left(f,\frac 1n\right)_{w,p}.
\end{equation*}
From the last inequality and (\ref{rathore2P}) we obtain (\ref{rathore1P}).

The reverse direction is an immediate consequence of Theorem~\ref{cor2}.
\end{proof}


One can observe that in Theorem~\ref{lem4} or in (\ref{eqJackHold1}) the best approximation $E_n(f)_{H_{w,p}^{r,\a}}$ can tend to zero very fast.
But at the same time, if for a function $f\in {L_{w,p}}$, we have
$\widetilde{\t}_{r,\a}(f,\d)_{w,p}=o(\d^{r-\a})$, then $f\equiv \mathrm{const}$ by (\ref{svmod2T}).
Thus, estimates (\ref{eqR2}) and (\ref{eqR3}) are not sharp in some sense because of the failure of $\widetilde{\t}_{r,\a}(f,\d)_{w,p}$ in the case $\a=r$. We introduce a  "modulus of smoothness" which, as we suppose,  will be more natural and useful for approximation  in the H\"older spaces. At least, the idea of this "modulus of smoothness" works for the strong converse inequalities in the  next section.

Let $1\le p\le\infty$, $0<\a\le r$, and $k>0$. Denote
\begin{equation}\label{eqnewmod}
  \widetilde{\psi}_{k,r,\a}(f,\d)_{w,p}= \sup\limits_{0<h\le \d}\frac{\widetilde{\w}_k(\sD_h^r f,\d)_{w,p}}{h^\a}.
\end{equation}

Concerning the properties of (\ref{eqnewmod}) we only mention that for any $f\in {L_{w,p}}$, $0<\a\le r$, and $k>0$
\begin{equation}\label{modmod}
  \widetilde{\t}_{k+r,\a}(f,\d)_{w,p}\le \widetilde{\psi}_{k,r,\a}(f,\d)_{w,p}\le C \widetilde{\t}_{r,\a}(f,\d)_{w,p}
\end{equation}
where $0<\d<\pi$ and the constant $C$ depends only on $k$ and $r$.

Indeed, by the definitions of moduli of smoothness and inequality~\eqref{eqOper1}, we obtain
\begin{equation*}
  \begin{split}
      \widetilde{\t}_{k+r,\a}(f,\d)_{w,p}&=\sup_{0<h\le \d}\sup_{0<t\le h}\frac{\Vert\sD_t^{k+r} f\Vert_{w,p}}{h^\a}=
      \sup_{0<h\le \d}\sup_{0<t\le h}\(\frac th\)^\a\frac{\Vert \sD_t^{k} \sD_t^{r} f\Vert_{w,p}}{t^\a}\\
      &\le \sup_{0<t\le \d}\frac{\Vert \sD_t^{k} \sD_t^{r} f\Vert_{w,p}}{t^\a}\le \sup_{0<t\le \d}\frac{\sup_{0<u\le t}\Vert \sD_u^{k} \sD_t^{r} f\Vert_{w,p}}{t^\a}\\
      &=\sup\limits_{0<t\le \d}\frac{\widetilde{\w}_k(\sD_t^r f,\d)_{w,p}}{t^\a}=\widetilde{\psi}_{k,r,\a}(f,\d)_{w,p}.
   \end{split}
\end{equation*}
At the same time, by~\eqref{eqOper1}
\begin{equation*}
  \sup\limits_{0<h\le \d}\frac{\widetilde{\w}_k(\sD_h^r f,\d)_{w,p}}{h^\a}
  \le C\sup\limits_{0<h\le \d}\frac{\Vert\sD_h^r f\Vert_{w,p}}{h^\a}\le  C\sup\limits_{0<h\le \d}\frac{\widetilde{\w}_r( f,h)_{w,p}}{h^\a}
  =C\widetilde{\t}_{r,\a}(f,\d)_{w,p},
\end{equation*}
which gives the second inequality in~\eqref{modmod}.

%

By using the modulus of smoothness (\ref{eqnewmod}), we obtain the following improvement of Theorem~\ref{cor2} in the case $\a=r$:


\begin{theorem}\label{th3}
   Let $f\in H^{r,\a}_{w,p}$, $1\le p\le \infty$, $0<\a\le r$, and $k>0$. Then
\begin{equation}\label{eqM77}
  E_n(f)_{H_{w,p}^{r,\a}}\le C\widetilde{\psi}_{k,r,\a}\(f,\frac 1n\)_{w,p}, \quad n\in\mathbb{N},
\end{equation}
where $C$ is a constant independent of $f$ and $n$.
\end{theorem}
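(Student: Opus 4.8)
The plan is to follow the scheme of the proof of Theorem~\ref{cor2}, but to replace the best $L_{w,p}$-approximant of $f$ by a \emph{linear} near-best approximant that commutes with the operators $\sD_t^r$. Concretely, let $V_n\colon L_{w,p}\to\mathcal{P}_{n-1}$ be a de la Vall\'ee Poussin-type operator for the Fourier--Jacobi expansion (see~\cite{Pl14}): it is diagonal in the basis $\{R_k^\ab\}$, bounded on $L_{w,p}$ uniformly in $n$ for all $1\le p\le\infty$, and reproduces $\mathcal{P}_{\lceil cn\rceil}$ for some fixed $c\in(0,1)$, so that $Q_n:=V_nf$ satisfies $\Vert f-Q_n\Vert_{w,p}\le CE_{\lceil cn\rceil}(f)_{w,p}$. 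Being diagonal, $V_n$ commutes with every $T_t^\ab$, hence with $\sD_t^r$. I would then estimate $E_n(f)_{H_{w,p}^{r,\a}}\le\Vert f-Q_n\Vert_{w,p}+|f-Q_n|_{H_{w,p}^{r,\a}}$ and, exactly as in~\eqref{eqcor2.1}, split the seminorm as $|f-Q_n|_{H_{w,p}^{r,\a}}\le(\sup_{0<h<1/n}+\sup_{h\ge 1/n})h^{-\a}\sw_r(f-Q_n,h)_{w,p}=S_1+S_2$.

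First I would dispose of the norm term and of $S_2$, which do not feel the refinement of the modulus. By Lemma~\ref{lem01} of order $k+r$, the near-best property, and the doubling of $\sw_{k+r}$ (a consequence of Lemma~\ref{lemeqF} and the doubling of the $K$-functional), one gets $\Vert f-Q_n\Vert_{w,p}\le C\sw_{k+r}(f,1/n)_{w,p}\le Cn^{-\a}\,\st_{k+r,\a}(f,1/n)_{w,p}$; since $n\ge1$ and, by the left inequality in~\eqref{modmod}, $\st_{k+r,\a}(f,1/n)_{w,p}\le\widetilde{\psi}_{k,r,\a}(f,1/n)_{w,p}$, the norm term is $\le C\widetilde{\psi}_{k,r,\a}(f,1/n)_{w,p}$. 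For $S_2$ I would use $\sw_r(f-Q_n,h)_{w,p}\le C\Vert f-Q_n\Vert_{w,p}$ (from~\eqref{eqOper1}) and $h^{-\a}\le n^{\a}$, which gives $S_2\le Cn^{\a}\Vert f-Q_n\Vert_{w,p}\le C\st_{k+r,\a}(f,1/n)_{w,p}\le C\widetilde{\psi}_{k,r,\a}(f,1/n)_{w,p}$.

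The heart of the argument is $S_1$. For $0<t\le h<1/n$ the commutation gives $\sD_t^r(f-Q_n)=\sD_t^rf-V_n(\sD_t^rf)$, so applying the near-best property of $V_n$ to the function $\sD_t^rf\in L_{w,p}$, together with Lemma~\ref{lem01} of order $k$ and doubling, yields
\begin{equation*}
  \Vert\sD_t^r(f-Q_n)\Vert_{w,p}=\Vert \sD_t^rf-V_n(\sD_t^rf)\Vert_{w,p}\le CE_{\lceil cn\rceil}(\sD_t^rf)_{w,p}\le C\sw_k(\sD_t^rf,1/n)_{w,p},
\end{equation*}
with a constant $C$ \emph{independent of} $t$. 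Since $t\le h$ we have $h^{-\a}\le t^{-\a}$, hence
\begin{equation*}
  h^{-\a}\Vert\sD_t^r(f-Q_n)\Vert_{w,p}\le Ct^{-\a}\sw_k(\sD_t^rf,1/n)_{w,p}\le C\sup_{0<s\le 1/n}\frac{\sw_k(\sD_s^rf,1/n)_{w,p}}{s^{\a}}=C\widetilde{\psi}_{k,r,\a}(f,1/n)_{w,p}.
\end{equation*}
Taking the supremum over $0<t\le h$ and then over $0<h<1/n$ gives $S_1\le C\widetilde{\psi}_{k,r,\a}(f,1/n)_{w,p}$, and combining the three bounds proves~\eqref{eqM77}.

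The step demanding care—and the one I regard as the main obstacle—is the \emph{uniform in $t$} near-best estimate for $\sD_t^rf$. It is precisely here that a linear, diagonal approximant is essential: for the genuine best approximant $P_n$ of $f$ one would instead have to show that $\sD_t^rP_n$ is a near-best approximant of $\sD_t^rf$ uniformly in $t$, which is not evident because the formal inverse of $\sD_t^r$ on $\mathcal{P}_{n-1}$ has norm blowing up like $t^{-r}$ on the low frequencies. Replacing $P_n$ by $Q_n=V_nf$ removes this difficulty, since commutation makes $\sD_t^rQ_n$ literally the de la Vall\'ee Poussin approximant of $\sD_t^rf$, to which the uniform near-best bound applies. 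The only remaining points are the harmless index adjustment $\lceil cn\rceil\mapsto n$ (absorbed by the doubling of the moduli) and the uniform boundedness of $\sD_t^r$ on $L_{w,p}$, both of which follow from~\eqref{eqOper1} and Lemma~\ref{lemeqF}.
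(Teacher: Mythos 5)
Your proposal is correct and follows essentially the same route as the paper: the paper also replaces the best approximant by the de la Vall\'ee Poussin--type means $V_n$ (diagonal in the Jacobi basis, hence commuting with $\sD_t^r$), uses the Jackson-type bound $\Vert f-V_n(f)\Vert_{w,p}\le C\sw_{r+k}(f,1/n)_{w,p}$ for the norm term and for $S_2$, and handles $S_1$ via the identity $\sD_h^r V_n(f)=V_n(\sD_h^r f)$ applied to $\sD_h^r f$. The only cosmetic difference is that you route the error of $V_n$ through the near-best property plus Lemma~\ref{lem01} and doubling, where the paper invokes the direct estimate~\eqref{eqVJ}; the substance is identical.
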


\begin{proof}

We will need the following de la Vall\'ee Poussin-type means
\begin{equation*}
V_n(f)(x)=\sum_{\nu=0}^n v\(\frac \nu n\)c_\nu^\ab(f)\mu_\nu^\ab R_\nu^\ab(x),
\end{equation*}
where
$v\in C^\infty (\R)$, $v(x)=1$ for $|x|\le 1/2$ and $v(x)=0$ for $|x|\ge 1$. It is well-known (see~\cite{Rus} or~\cite{Pl14}) that there exists a constant $C$ such that for any $f\in L_{w,p}$, $1\le p\le\infty$,
\begin{equation}\label{eqVJ}
  \Vert f-V_n(f)\Vert_{w,p}\le C\widetilde{\w}_{r+k}\(f,\frac 1n\)_{w,p}, \quad n\in \N.
\end{equation}
This yields that
\begin{equation}\label{eqPP}
\begin{split}
  E_n(f)_{H_{w,p}^{r,\a}}&\le \Vert f-V_n(f)\Vert_{w,p}+|f-V_n(f)|_{H_{w,p}^{r,\a}}\\
&\le C\widetilde{\w}_{r+k}\(f,\frac1n\)_{w,p}+|f-V_n(f)|_{H_{w,p}^{r,\a}}.
\end{split}
\end{equation}
It is evident (see the last inequality in (\ref{eqPP2}), below) that one only needs to estimate the second term of the right-hand side in (\ref{eqPP}).
We have
\begin{equation}\label{eqPP1}
\begin{split}
|f-V_n(f)|_{H_{w,p}^{r,\a}}\le \(\sup_{0<h<1/n}+\sup_{h\ge1/n}\)\frac{\sw_r(f-V_n(f),h)_{w,p}}{h^\a}=S_1+S_2.
\end{split}
\end{equation}
By inequalities (\ref{eqVJ}) and (\ref{eqOper1}), we get
\begin{equation}\label{eqPP2}
\begin{split}
S_2&\le C n^\a \Vert f-V_n(f)\Vert_{w,p} \le Cn^\a \widetilde{\w}_{r+k}\(f,\frac1n\)_{w,p}=Cn^\a \sup_{0<\d\le 1/n}\Vert \sD_\d^k\sD_\d^r f\Vert_{w,p}\\
&\le C n^\a \sup_{0<h\le 1/n} \sup_{0<\d\le 1/n}\Vert \sD_\d^k\sD_h^r f\Vert_{w,p}\le C \widetilde{\psi}_{k,r,\a}\(f,\frac1n\)_{w,p}.
\end{split}
\end{equation}
In order to estimate $S_1$ we use the equality
$\sD_h^r V_n(f)=V_n (\sD_h^r f)$
and once again (\ref{eqVJ}). Hence,
\begin{equation}\label{eqPP3}
\begin{split}
S_1\le \sup_{0<h\le 1/n}\frac{\Vert \sD_h^r f-V_n(\sD_h^r f)\Vert_{w,p}}{h^\a}\le C\widetilde{\psi}_{k,r,\a}\(f,\frac1n\)_{w,p}.
\end{split}
\end{equation}
Thus, combining (\ref{eqPP})--(\ref{eqPP3}), we get (\ref{eqM77}).

\end{proof}

The following two theorems can be obtained in the same manner as Theorem~\ref{thR4} and  Theorem~\ref{th3KP} above.
However, we already have non-trivial inequalities in the case $\a=r$.

\begin{theorem}\label{thR4con}
Let $f\in H^{r,\a}_{w,p}$, $1\le p\le \infty$, $0<\a\le r$, and  $k>0$.  Then
\begin{equation*}\label{eqM9}
 \widetilde{\psi}_{k,r,\a}\(f,\frac 1n\)_{w,p}\le \frac{C}{n^{k}}\sum_{\nu=1}^{n} \nu^{k-1} E_\nu(f)_{H_{w,p}^{r,\a}},\quad n\in \N,
\end{equation*}
where $C$ is a constant independent of $f$ and $n$.
\end{theorem}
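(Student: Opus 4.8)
The plan is to mirror the proof of Theorem~\ref{thR4}, replacing $\widetilde{\t}_{k,\a}$ by the new modulus $\widetilde{\psi}_{k,r,\a}$ and keeping track of the extra factor $\sD_h^r$. I would take $P_n\in\mathcal{P}_{n-1}$, $n\in\N$, to be the polynomials of best approximation of $f$ in $H_{w,p}^{r,\a}$, fix $m$ with $2^m\le n<2^{m+1}$, and use the subadditivity of $\widetilde{\psi}_{k,r,\a}$ (immediate from the linearity of $\sD_h^r$ and the subadditivity of $\widetilde{\w}_k$) to split
\begin{equation*}
\widetilde{\psi}_{k,r,\a}\(f,\frac1n\)_{w,p}\le \widetilde{\psi}_{k,r,\a}\(f-P_{2^{m+1}},\frac1n\)_{w,p}+\widetilde{\psi}_{k,r,\a}\(P_{2^{m+1}},\frac1n\)_{w,p}.
\end{equation*}

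For the tail term I would invoke the right-hand inequality in (\ref{modmod}) together with the trivial bound $\widetilde{\t}_{r,\a}(g,\frac1n)_{w,p}\le |g|_{H_{w,p}^{r,\a}}$, which gives $\widetilde{\psi}_{k,r,\a}(f-P_{2^{m+1}},\frac1n)_{w,p}\le C\,|f-P_{2^{m+1}}|_{H_{w,p}^{r,\a}}\le C\,E_{2^{m+1}}(f)_{H_{w,p}^{r,\a}}\le C\,E_n(f)_{H_{w,p}^{r,\a}}$, using monotonicity of the best approximation. This last quantity is then absorbed into the target sum via $E_n(f)_{H_{w,p}^{r,\a}}\le C n^{-k}\sum_{\mu=1}^n\mu^{k-1}E_\mu(f)_{H_{w,p}^{r,\a}}$, which holds because $\sum_{\mu=1}^n\mu^{k-1}\asymp n^k$ and $E_\mu\ge E_n$ for $\mu\le n$.

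The heart of the argument is the polynomial term. Since $T_h^\ab$ acts diagonally on the Fourier–Jacobi expansion, $\sD_h^r P_{2^{m+1}}$ is again a polynomial of degree at most $2^{m+1}$, and $\Dj^k$ commutes with $\sD_h^r$. As $1/n\le 2^{-m}<\pi\,2^{-(m+1)}$, Lemma~\ref{lem1} applies to the inner modulus, and combined with $\Vert\sD_h^r g\Vert_{w,p}\le \sw_r(g,h)_{w,p}$ it yields
\begin{equation*}
\widetilde{\psi}_{k,r,\a}\(P_{2^{m+1}},\frac1n\)_{w,p}\le \frac{C}{n^k}\sup_{0<h\le 1/n}\frac{\Vert\sD_h^r\Dj^k P_{2^{m+1}}\Vert_{w,p}}{h^\a}\le \frac{C}{n^k}\big|\Dj^k P_{2^{m+1}}\big|_{H_{w,p}^{r,\a}}.
\end{equation*}

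It remains to estimate $|\Dj^k P_{2^{m+1}}|_{H_{w,p}^{r,\a}}$, and here the key ingredient is a Bernstein-type inequality in the H\"older seminorm: for $Q\in\mathcal{P}_N$ one has $|\Dj^k Q|_{H_{w,p}^{r,\a}}\le C N^k\Vert Q\Vert_{H_{w,p}^{r,\a}}$. I would deduce it from the $L_{w,p}$ Bernstein inequality $\Vert\Dj^k R\Vert_{w,p}\le C N^k\Vert R\Vert_{w,p}$ (a consequence of Lemma~\ref{lem1} and (\ref{eqOper1})) applied to $R=\sD_t^r Q$, giving $\sw_r(\Dj^k Q,h)_{w,p}\le C N^k\sw_r(Q,h)_{w,p}$ and hence the claim after dividing by $h^\a$ and taking the supremum. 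Telescoping $\Dj^k P_{2^{m+1}}=\sum_{\nu=0}^m\Dj^k U_{2^\nu}$ with $U_{2^\nu}=P_{2^{\nu+1}}-P_{2^\nu}$, $U_{2^0}=P_2-P_1$ (and $\Dj^k P_1=0$), applying this inequality to each block, and using $\Vert U_{2^\nu}\Vert_{H_{w,p}^{r,\a}}\le 2E_{2^\nu}(f)_{H_{w,p}^{r,\a}}$, I obtain
\begin{equation*}
\big|\Dj^k P_{2^{m+1}}\big|_{H_{w,p}^{r,\a}}\le C\sum_{\nu=0}^{m} 2^{\nu k}E_{2^\nu}(f)_{H_{w,p}^{r,\a}}\le C\sum_{\mu=1}^{2^m}\mu^{k-1}E_\mu(f)_{H_{w,p}^{r,\a}}\le C\sum_{\mu=1}^{n}\mu^{k-1}E_\mu(f)_{H_{w,p}^{r,\a}},
\end{equation*}
the dyadic-to-full-sum passage being exactly that of (\ref{eq.thR4.4}). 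Combining the three estimates gives the assertion, valid for all $0<\a\le r$ including $\a=r$. The main obstacle I anticipate is establishing the Bernstein inequality in the H\"older seminorm and the bookkeeping that confirms the degree of $\sD_h^r P_{2^{m+1}}$ and the admissible range $1/n<\pi\,2^{-(m+1)}$ permit the use of Lemma~\ref{lem1}; the remaining steps run parallel to Theorem~\ref{thR4}.
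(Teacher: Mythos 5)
Your proof is correct, and it follows exactly the route the paper intends: the paper gives no separate argument for Theorem~\ref{thR4con}, stating only that it can be obtained ``in the same manner as Theorem~\ref{thR4}''. Your one genuine addition --- replacing the $H_{w,p}^{k,\a}\sim H_{w,p}^{r,\a}$ equivalence step of (\ref{eq.thR4.3}) (which needs $\a<\min(r,k)$ and integer-type order bookkeeping) by the Bernstein-type bound $|\Dj^k Q|_{H_{w,p}^{r,\a}}\le C N^k\Vert Q\Vert_{H_{w,p}^{r,\a}}$, obtained by commuting $\Dj^k$ with $\sD_t^r$ on the Fourier--Jacobi side --- is precisely the adaptation needed to cover $\a=r$ and arbitrary $k>0$, and the remaining details (the degree of $\sD_h^r P_{2^{m+1}}$, the admissible range $1/n\le 2^{-m}<\pi\,2^{-(m+1)}$ for Lemma~\ref{lem1}, and the dyadic-to-full-sum passage) all check out.
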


\begin{theorem}\label{th3K}
                    Let $f\in {H_{w,p}^{r,\a}}$, $1\le p\le\infty$, $0<\a\le r$, and $s>0$.
Then the following assertion are equivalent:

\noindent $(i)$ there exists a constant $L>0$ such that
                     \begin{equation*}\label{rathore1}
                           \widetilde{\psi}_{s,r,\a}\bigg(f,\frac 1n\bigg)_{w,p}\le LE_{n}(f)_{H_{w,p}^{r,\a}},\quad n\in \N,
                     \end{equation*}
$(ii)$ for  some $k>s$ there exists a constant
                      $M>0$ such that
                     \begin{equation*}\label{rathore2}
                            \widetilde{\psi}_{s,r,\a}(f,h)_{w,p}\le M \widetilde{\psi}_{k,r,\a}(f,h)_{w,p},\quad h>0.
                     \end{equation*}
\end{theorem}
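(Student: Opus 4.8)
The plan is to transcribe the Rathore-type argument of Theorem~\ref{th3KP}, replacing the modulus $\widetilde{\t}_{k,\a}$ by $\widetilde{\psi}_{k,r,\a}$ and using Theorem~\ref{th3} and Theorem~\ref{thR4con} in place of Theorem~\ref{cor2} and Theorem~\ref{thR4}. The direction $(i)\Rightarrow(ii)$ is the easy one: applying Theorem~\ref{th3} with the order $k$ gives $E_n(f)_{H_{w,p}^{r,\a}}\le C\widetilde{\psi}_{k,r,\a}\(f,\frac1n\)_{w,p}$, so $(i)$ forces $\widetilde{\psi}_{s,r,\a}\(f,\frac1n\)_{w,p}\le CL\,\widetilde{\psi}_{k,r,\a}\(f,\frac1n\)_{w,p}$ for every $n$; since $\widetilde{\psi}_{k,r,\a}(f,\cdot)_{w,p}$ is nondecreasing and its values at $1/n$ and $1/(n+1)$ are comparable, this will pass to all $h>0$ and give $(ii)$. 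So the work is in $(ii)\Rightarrow(i)$.

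First I would record the two structural inequalities for $\widetilde{\psi}$ that mirror \eqref{svmod1T} and \eqref{svmod2T}. Feeding $g=\sD_h^r f$ into $\widetilde{\w}_k(g,\cdot)_{w,p}\le 2^{k-s}\widetilde{\w}_s(g,\cdot)_{w,p}$ (which comes from \eqref{eqOper1}, as in Lemma~\ref{lemHolder}) and dividing by $h^\a$ gives at once the reduction inequality $\widetilde{\psi}_{k,r,\a}(f,\d)_{w,p}\le 2^{k-s}\widetilde{\psi}_{s,r,\a}(f,\d)_{w,p}$ for $k>s$. The second, and crucial, ingredient is a homogeneity inequality
\begin{equation*}
  \widetilde{\psi}_{k,r,\a}(f,N\d)_{w,p}\le CN^{\,k+r-\a}\,\widetilde{\psi}_{k,r,\a}(f,\d)_{w,p},\qquad N\in\N,\ \d>0,
\end{equation*}
which I would derive from the scaling $\widetilde{\w}_k(g,N\d)_{w,p}\le CN^{k}\widetilde{\w}_k(g,\d)_{w,p}$ (via \eqref{eqTildeqKw}) together with the control of $h\mapsto\sD_h^r f$ on the enlarged range $\d<h\le N\d$ supplied by the Stechkin--Nikolskii inequality of Lemma~\ref{lem1}. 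Combining the reduction inequality for the pair $k>s$, this homogeneity inequality at the order $s$, and hypothesis $(ii)$ will produce the effective estimate $\widetilde{\psi}_{k,r,\a}(f,N\d)_{w,p}\le CM\,N^{\,s+r-\a}\,\widetilde{\psi}_{k,r,\a}(f,\d)_{w,p}$.

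Then I would run the Rathore summation exactly as in Theorem~\ref{th3KP}. Bounding each $E_\nu(f)_{H_{w,p}^{r,\a}}$ by $C\widetilde{\psi}_{k,r,\a}(f,1/\nu)_{w,p}$ through Theorem~\ref{th3} and comparing the scales $1/\nu$ and $1/n$ with the effective estimate, the Hardy-type sum will collapse to
\begin{equation*}
  \frac{1}{n^{\,k+r-\a}}\sum_{\nu=1}^n \nu^{\,k+r-\a-1}E_\nu(f)_{H_{w,p}^{r,\a}}\le CM\,\widetilde{\psi}_{k,r,\a}\(f,\frac1n\)_{w,p},
\end{equation*}
the convergence of $\sum_{\nu=1}^n\nu^{\,k-s-1}$ being exactly where $k>s$ is used. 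Feeding the inverse estimate of Theorem~\ref{thR4con} on the block $[n+1,mn]$, splitting the sum at $\nu=n$, estimating the lower part by the last display, the upper part by the monotonicity of $E_n(f)_{H_{w,p}^{r,\a}}$, and the value $\widetilde{\psi}_{k,r,\a}(f,1/(mn))_{w,p}$ from below by the effective estimate, I will reach
\begin{equation*}
  E_n(f)_{H_{w,p}^{r,\a}}\sum_{\nu=n+1}^{mn}\nu^{\,k+r-\a-1}\ge\(Cm^{\,k-s}-M\)n^{\,k+r-\a}\widetilde{\psi}_{k,r,\a}\(f,\frac1n\)_{w,p}.
\end{equation*}
Taking $m$ so large that $Cm^{\,k-s}-M>0$ and bounding the remaining sum by $Cn^{\,k+r-\a}$ will give $E_n(f)_{H_{w,p}^{r,\a}}\ge C\widetilde{\psi}_{k,r,\a}(f,1/n)_{w,p}$, which with $(ii)$ is precisely $(i)$.

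The hard part is the homogeneity inequality and the matching of exponents. Whereas $\widetilde{\t}_{k,\a}$ has homogeneity exponent $k-\a$, the extra operator $\sD_h^r$ makes the true exponent of $\widetilde{\psi}_{k,r,\a}$ equal to $k+r-\a$, and this same exponent must then occur in the weight $\nu^{\,k+r-\a-1}$ of the inverse estimate; it coincides with the exponent of Theorem~\ref{thR4con} exactly in the principal case $\a=r$, where $k+r-\a=k$. Only with the exponent $k+r-\a$ does the sole hypothesis $k>s$ simultaneously make $\sum\nu^{\,k-s-1}$ converge and $Cm^{\,k-s}-M$ positive, so verifying this homogeneity from \eqref{eqOper1}, \eqref{eqTildeqKw} and Lemma~\ref{lem1} is the one step with no analogue in Theorem~\ref{th3KP}; everything else copies that proof line by line.
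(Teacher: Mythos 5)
Your overall strategy is the one the paper intends: Theorem~\ref{th3K} is stated to follow ``in the same manner as'' Theorem~\ref{th3KP}, with Theorem~\ref{th3} as the direct estimate and Theorem~\ref{thR4con} as the inverse estimate, and you have correctly seen that everything hinges on matching the homogeneity exponent of $\widetilde{\psi}_{s,r,\a}$ against the exponent appearing in the inverse estimate. Your identification of that exponent is also right: for smooth $f$ one has $\Vert \sD_u^k\sD_h^r f\Vert_{w,p}\asymp u^k h^r$, hence $\widetilde{\psi}_{k,r,\a}(f,\d)_{w,p}\asymp \d^{k+r-\a}$, so no exponent below $k+r-\a$ can serve, while the inequality $\widetilde{\psi}_{k,r,\a}(f,N\d)_{w,p}\le CN^{k+r-\a}\widetilde{\psi}_{k,r,\a}(f,\d)_{w,p}$ does hold --- although it comes from \eqref{eqOper1} and the scaling of $\sw_r$ via \eqref{eqTildeqKw} applied to $\sD_u^k f$ after commuting the operators, not from Lemma~\ref{lem1}, which is a statement about polynomials.

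The gap is in the inverse estimate. You feed the block $[n+1,mn]$ into ``the inverse estimate of Theorem~\ref{thR4con}'' with weight $\nu^{k+r-\a-1}$ and normalization $(mn)^{-(k+r-\a)}$, but Theorem~\ref{thR4con} has weight $\nu^{k-1}$ and normalization $n^{-k}$; since $\nu^{k+r-\a-1}/n^{k+r-\a}=(\nu/n)^{r-\a}\,\nu^{k-1}/n^{k}\le \nu^{k-1}/n^{k}$ for $\nu\le n$, the bound you need is strictly \emph{stronger} than Theorem~\ref{thR4con} whenever $\a<r$, and does not follow from it. You notice the mismatch yourself (``coincides \dots exactly in the principal case $\a=r$'') but leave it unresolved, so as written the argument only covers $\a=r$; with the exponent $k$ of Theorem~\ref{thR4con} the Hardy sum $\sum_{\nu\le n}\nu^{k-1}(n/\nu)^{s+r-\a}$ requires $k>s+r-\a$, not just $k>s$. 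The fix is to reprove the inverse estimate with the larger exponent: in the telescoping argument of Theorem~\ref{thR4}, estimate for $0<h\le 1/n$
\begin{equation*}
h^{-\a}\,\sw_k(\sD_h^r P_{2^{m+1}},1/n)_{w,p}\le C n^{-k}h^{-\a}\Vert \sD_h^r \Dj^k P_{2^{m+1}}\Vert_{w,p}\le C n^{-k}h^{r-\a}\Vert \Dj^{k+r}P_{2^{m+1}}\Vert_{w,p}\le C n^{-(k+r-\a)}\Vert \Dj^{k+r}P_{2^{m+1}}\Vert_{w,p},
\end{equation*}
using Lemma~\ref{lem1} and $r\ge\a$, and then the same dyadic summation yields $\widetilde{\psi}_{k,r,\a}(f,1/n)_{w,p}\le Cn^{-(k+r-\a)}\sum_{\nu=1}^{n}\nu^{k+r-\a-1}E_\nu(f)_{H_{w,p}^{r,\a}}$. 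With this strengthened inverse estimate your Rathore computation closes under the sole hypothesis $k>s$ for all $0<\a\le r$.
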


\section{Strong converse inequalities in the H\"older spaces $H_{w,p}^{r,\a}$}

To formulate the main theorem  in this section we will need some auxiliary notations. For that purpose
let us introduce the general modulus of smoothness.

We will say that $\w=\w(\cdot,\cdot)_{w,p}\in \Omega_{w,p}=\Omega({L_{w,p}},\R_+)$, $1\le p\le \infty$, if for any $f,g\in {L_{w,p}}$

\medskip

$(i)$
  $\w(f,\d)_{w,p}\le C\Vert f\Vert_{w,p}$, $\d>0$,
\medskip

$(ii)$
  $\w(f+g,\d)_{w,p}\le C\(\w(f,\d)_{w,p}+\w(g,\d)_{w,p}\)$, $\d>0$,

\medskip

\noindent where $C$ is a constant independent of $f$, $g$, and $\d$.

\medskip


As a function $\w$ we can take, for example, the modulus of smoothness $\widetilde{\w}_k(f,\d)_{w,p}$ of arbitrary order $k$, or the corresponding
$K$-functional or its realization (see~\cite{DaDi}).

By using $\w\in \Omega_{w,p}$, we define the  new ''modulus of smoothness'' related to the
H\"older space ${H_{w,p}^{r,\a}}$ by
\begin{equation}\label{eqM4}
  \w(f,\d)_{H_{w,p}^{r,\a}}=\w(f,\d)_{w,p}+\sup_{0<h<\pi}\frac{\w(\sD_h^r f,\d)_{w,p}}{h^\a}.
\end{equation}

Note that, if we take $\w(f,\d)_{w,p}=\Vert f\Vert_{w,p}$,
then (\ref{eqM4}) defines the norm in the H\"older spaces which was introduced in (\ref{eqI4}), see also Lemma~\ref{dopprop} below.
But if $\w(f,\d)_{w,p}=\widetilde{\w}_k(f,\d)_{w,p}$, then formula (\ref{eqM4}) provides the definition of the corresponding modulus of smoothness $\widetilde{\w}_k(f,\d)_{H_{w,p}^{r,\a}}$ in the H\"older spaces ${H_{w,p}^{r,\a}}$.

\begin{remark}\label{remReplace}
It is easy to see that Theorem~\ref{th3}, Theorem~\ref{thR4con}, and Theorem~\ref{th3K} remain valid if we replace the modulus of smoothness
$\widetilde{\psi}_{k,r,\a}\(f,1/n\)_{w,p}$ by $\widetilde{\w}_k(f,\d)_{H_{w,p}^{r,\a}}$ in the corresponding theorem.
\end{remark}

\begin{lemma}\label{dopprop}
  Let $1\le p\le\infty$ and $0<\a\le r$. Then
  \begin{equation}\label{dopeq0}
    \sup_{0<h<\pi}\frac{\Vert \sD_h^r f\Vert_{w,p}}{h^\a}=\sup_{h>0}\frac{\sw_r(f,h)_{w,p}}{h^\a}=|f|_{H_{w,p}^{r,\a}}.
  \end{equation}
\end{lemma}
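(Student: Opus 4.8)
The goal is to prove the identity in Lemma~\ref{dopprop}, namely that
\begin{equation*}
  \sup_{0<h<\pi}\frac{\Vert \sD_h^r f\Vert_{w,p}}{h^\a}=\sup_{h>0}\frac{\sw_r(f,h)_{w,p}}{h^\a}=|f|_{H_{w,p}^{r,\a}}.
\end{equation*}
The second equality is simply the definition \eqref{eqI4} of the seminorm, so the real content is the first equality. The plan is to prove it by establishing the two inequalities separately. Since $\sw_r(f,h)_{w,p}=\sup_{0<t\le h}\Vert \sD_t^r f\Vert_{w,p}\ge \Vert\sD_h^r f\Vert_{w,p}$ for $0<h<\pi$, dividing by $h^\a$ and taking the supremum immediately gives the ``$\le$'' direction; this is the easy half and requires no real work.

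For the reverse inequality ``$\ge$'', the point is to show that passing from $\Vert\sD_h^r f\Vert_{w,p}$ to its running supremum $\sw_r(f,h)_{w,p}$ does not increase the weighted seminorm. First I would fix $h$ with $0<h<\pi$ and write out, for any $t$ with $0<t\le h$,
\begin{equation*}
  \frac{\Vert \sD_t^r f\Vert_{w,p}}{h^\a}=\(\frac th\)^\a\frac{\Vert \sD_t^r f\Vert_{w,p}}{t^\a}\le \frac{\Vert \sD_t^r f\Vert_{w,p}}{t^\a}\le \sup_{0<s<\pi}\frac{\Vert \sD_s^r f\Vert_{w,p}}{s^\a},
\end{equation*}
where the first inequality uses $0<\a$ together with $t\le h$, so that $(t/h)^\a\le 1$. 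Taking the supremum over $0<t\le h$ on the left turns the left-hand side into $\sw_r(f,h)_{w,p}/h^\a$, and then taking the supremum over $0<h<\pi$ yields the ``$\ge$'' direction. The same monotonicity argument also covers the values $h\ge\pi$ entering the right-hand side $\sup_{h>0}$, since by the stated convention $\sw_r(f,h)_{w,p}=\sw_r(f,\pi)_{w,p}$ for $h\ge\pi$ while $h^\a$ grows, so those terms are dominated by the value at $h=\pi$ and contribute nothing new to the supremum.

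I do not expect any serious obstacle here: the whole statement is essentially a bookkeeping exercise, and the only genuine idea is the elementary estimate $(t/h)^\a\le 1$ for $t\le h$, which is where the hypothesis $\a>0$ is used. The one point demanding a little care is the handling of the two different ranges of the supremum, $0<h<\pi$ on the left and $h>0$ in the middle expression, which must be reconciled using the definition of $\sw_r(f,h)_{w,p}$ for $h\ge\pi$ as $\sw_r(f,\pi)_{w,p}$. Once that convention is invoked, the supremum over $h>0$ coincides with the supremum over $0<h<\pi$, and all three quantities are seen to agree.
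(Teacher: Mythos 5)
Your proof is correct and follows essentially the same route as the paper's: the easy inequality from $\sw_r(f,h)_{w,p}\ge\Vert\sD_h^r f\Vert_{w,p}$, the reverse via the elementary bound $(t/h)^\a\le 1$ for $0<t\le h$, and the range $h\ge\pi$ disposed of by the convention $\sw_r(f,h)_{w,p}=\sw_r(f,\pi)_{w,p}$ together with the monotonicity of $h^\a$. No gaps.
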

\begin{proof}
Indeed, it is obvious that
  \begin{equation}\label{dopeq1}
    \sup_{0<h<\pi}\frac{\Vert \sD_h^r f\Vert_{w,p}}{h^\a}\le\sup_{h>0}\frac{\sw_r(f,h)_{w,p}}{h^\a}.
  \end{equation}
Thus, to show~\eqref{dopeq0} we have only to verify the converse inequality.

Let $0<\d\le h<\pi$, then
\begin{equation*}
  \begin{split}
     \Vert \sD_\d^r f\Vert_{w,p}\le \d^\a \sup_{0<t\le \d}\frac{\Vert \sD_t^r f\Vert_{w,p}}{t^\a}\le h^\a \sup_{0<t\le \pi}\frac{\Vert \sD_t^r f\Vert_{w,p}}{t^\a}.
   \end{split}
\end{equation*}
Therefore, for any $0<h<\pi$ we get
\begin{equation}\label{dopeq2}
  \frac{\sw_r(f,h)_{w,p}}{h^\a}\le\sup_{0<h<\pi}\frac{\Vert \sD_h^r f\Vert_{w,p}}{h^\a}.
\end{equation}
By the definition of $\sw_r(f,h)_{w,p}$, we also have
$$
\sup_{h\ge \pi}\frac{\sw_r(f,h)_{w,p}}{h^\a}=\frac{\sw_r(f,\pi)_{w,p}}{\pi^\a}=\sup_{0<h<\pi}\frac{\Vert \sD_h^r f\Vert_{w,p}}{\pi^\a}\le\sup_{0<h<\pi}\frac{\Vert \sD_h^r f\Vert_{w,p}}{h^\a}.
$$
The last inequality together with \eqref{dopeq2} implies
  \begin{equation}\label{dopeq3}
    \sup_{h>0}\frac{\sw_r(f,h)_{w,p}}{h^\a}\le\sup_{0<h<\pi}\frac{\Vert \sD_h^r f\Vert_{w,p}}{h^\a}.
  \end{equation}
Finally, combining~\eqref{dopeq1} and~\eqref{dopeq3}, we obtain~\eqref{dopeq0}.
\end{proof}

Now we are ready to formulate the main result of this section.

\begin{theorem}\label{th2}
Let $1\le p\le \infty$, $0<\a\le r$, and $\w \in \Omega_{w,p}$.
Let $\{\L_{n}\}$ be bounded linear operators in ${L_{w,p}}$ such that
\begin{equation}\label{eqTrans}
  T_h^{(a,b)} \(\L_n (f)\)= \L_n \(T_h^{(a,b)} f\),\quad  f\in L_{w,p},\quad n\in \N,\quad  h\in (0,\pi).
\end{equation}
If the following equivalence holds for any $f\in {L_{w,p}}$
\begin{equation}\label{eqM5_1}
  \Vert f-\L_{n}(f)\Vert_{w,p}\asymp \w\(f,\frac 1n\)_{w,p},\quad n\in \N,
\end{equation}
then we have for any $f\in {H_{w,p}^{r,\a}}$
\begin{equation}\label{eqM6_2}
  \Vert f-\L_{n}(f)\Vert_{H_{w,p}^{r,\a}}\asymp \w\(f,\frac 1n\)_{H_{w,p}^{r,\a}}, \quad n\in \N.
\end{equation}
\end{theorem}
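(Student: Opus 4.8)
The plan is to exploit the fact that both $\Vert f-\L_n(f)\Vert_{H_{w,p}^{r,\a}}$ and the modulus $\w(f,1/n)_{H_{w,p}^{r,\a}}$ from~(\ref{eqM4}) split into an $L_{w,p}$-part and a weighted supremum built from the operators $\sD_h^r$, and to match these pieces one by one. By the definition~(\ref{eqI4}) together with Lemma~\ref{dopprop}, the seminorm part of the left-hand side is
\[
  |f-\L_n(f)|_{H_{w,p}^{r,\a}}=\sup_{0<h<\pi}\frac{\Vert \sD_h^r\(f-\L_n(f)\)\Vert_{w,p}}{h^\a},
\]
which is structurally parallel to the second summand $\sup_{0<h<\pi}\w(\sD_h^r f,1/n)_{w,p}/h^\a$ in~(\ref{eqM4}). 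The two $L_{w,p}$-parts $\Vert f-\L_n(f)\Vert_{w,p}$ and $\w(f,1/n)_{w,p}$ are already equivalent by the hypothesis~(\ref{eqM5_1}), so the whole statement reduces to comparing the two weighted suprema. (The axioms of $\Omega_{w,p}$ enter only through property~$(i)$, which guarantees that $\w(f,1/n)_{H_{w,p}^{r,\a}}$ is finite for $f\in H_{w,p}^{r,\a}$.)

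The crux of the argument, and essentially its only nontrivial point, is to upgrade the commutation relation~(\ref{eqTrans}) from $T_h^{(a,b)}$ to the fractional difference $\sD_h^r$. Since $\sD_h^r=\sum_{k=0}^\infty(-1)^k\binom{r/2}{k}\(T_h^{(a,b)}\)^k$ with $\sum_k\bigl|\binom{r/2}{k}\bigr|<\infty$ for $r>0$, and $\Vert T_h^{(a,b)}\Vert\le 1$ by~(\ref{eqOper1}), this series converges absolutely in the operator norm on $L_{w,p}$; in particular $\sD_h^r$ is bounded and $\sD_h^r f\in L_{w,p}$. Iterating~(\ref{eqTrans}) gives $\L_n\(T_h^{(a,b)}\)^k=\(T_h^{(a,b)}\)^k\L_n$ for every $k$, and the continuity and linearity of the bounded operator $\L_n$ allow me to pass $\L_n$ term by term through the convergent series, yielding
\[
  \sD_h^r\(\L_n(f)\)=\L_n\(\sD_h^r f\),\quad f\in L_{w,p},\ n\in\N,\ 0<h<\pi.
\]
I expect this to be the main obstacle, though it is a mild one: it is purely a matter of justifying the termwise interchange, for which the absolute convergence above suffices.

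With the commutation established the conclusion follows quickly. Setting $g_h=\sD_h^r f\in L_{w,p}$, the commutation gives $\sD_h^r\(f-\L_n(f)\)=g_h-\L_n(g_h)$, and applying~(\ref{eqM5_1}) to the function $g_h$ — whose equivalence constants are, crucially, independent of the particular function and hence uniform in $h$ — I obtain
\[
  \frac{\Vert \sD_h^r\(f-\L_n(f)\)\Vert_{w,p}}{h^\a}=\frac{\Vert g_h-\L_n(g_h)\Vert_{w,p}}{h^\a}\asymp\frac{\w\(\sD_h^r f,1/n\)_{w,p}}{h^\a}.
\]
Because these two-sided bounds are uniform in $h$, taking the supremum over $0<h<\pi$ preserves them, so the two weighted suprema are equivalent. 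Combining this with the equivalence of the $L_{w,p}$-parts from~(\ref{eqM5_1}) and adding the two comparable pairs of quantities (which is immediate for sums) yields the desired equivalence~(\ref{eqM6_2}).
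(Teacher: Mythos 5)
Your proof is correct and follows essentially the same route as the paper: the paper likewise reduces everything to the commutation $\sD_h^r\L_n(f)=\L_n(\sD_h^r f)$ (which it simply asserts as a consequence of \eqref{eqTrans}), then applies \eqref{eqM5_1} to $\sD_h^r f$ uniformly in $h$ and invokes Lemma~\ref{dopprop}. Your explicit justification of the termwise interchange in the binomial series for $\sD_h^r$ is a welcome elaboration of a step the paper leaves implicit, but it is not a different argument.
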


\begin{proof}
In view of (\ref{eqTrans}), for any $f\in L_{w,p}$ one has
\begin{equation}\label{eqth1pr5}
\L_{n} (\sD_h^r f)=\sD_h^r \L_{n}(f).
\end{equation}
Thus, by (\ref{eqM5_1}), (\ref{eqth1pr5}), and (\ref{dopeq0}) we get
\begin{equation*}\label{eqth1pr3}
\begin{split}
  \sup_{0<h<\pi}\frac{\w(\sD_h^r f,1/n)_{w,p}}{h^\a}
&\asymp \sup_{0<h<\pi}\frac{\Vert\sD_h^r f-\L_{n}(\sD_h^r f)\Vert_{w,p}}{h^\a} \\
&= \sup_{0<h<\pi}\frac{\Vert \sD_h^r f-\sD_h^r \L_{n}(f)\Vert_{w,p}}{h^\a}=|f-\L_n(f)|_{H_{w,p}^{r,\a}},
\end{split}
\end{equation*}
which together with (\ref{eqM5_1}) yields (\ref{eqM6_2}).
\end{proof}

\begin{remark}\label{rem+}
  It is easy to see that in the assertion of Theorem~\ref{th2} one can simultaneously replace two-sided inequality~\eqref{eqM5_1} by $\Vert f-\L_{n}(f)\Vert_{w,p}\le C\w\(f,1/n\)_{w,p}$ and two-sided inequality \eqref{eqM6_2} by $\Vert f-\L_{n}(f)\Vert_{H_{w,p}^{r,\a}}\le C\w\(f, 1/n\)_{H_{w,p}^{r,\a}}$. The same is true with the sign ''$\ge$'' in place of ''$\le$''.
\end{remark}

It turns out that, in general, strong converse inequalities  do not hold in terms of $\widetilde{\t}_{r,\a}(f,\d)_{w,p}$.
However, we have the following result:

\begin{proposition}\label{thR5}
Let $1\le p\le \infty$ and $0<\a\le r$.
Suppose  $\{\L_{n}\}$ are bounded polynomial operators from $L_{w,p}$ to $\mathcal{P}_{n-1}$
and the following equivalence holds for any $f\in L_{w,p}$
\begin{equation*}
  \Vert f-\L_{n}(f)\Vert_{w,{p}}\asymp \sw_r\(f,\frac 1n\)_{w,p},\quad n\in \N.
\end{equation*}
Then for any $f\in H_{w,p}^{r,\a}$
\begin{equation}\label{eqR5}
  n^\a\sw_r\(f,\frac1n\)_{w,p}+\Vert f-\L_{n}(f)\Vert_{H_{w,{p}}^{r,\a}}\asymp \widetilde{\t}_{r,\a}\(f,\frac 1n\)_{w,p},\quad n\in \N.
\end{equation}
\end{proposition}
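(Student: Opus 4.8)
The plan is to prove separately the two one-sided inequalities hidden in the equivalence~(\ref{eqR5}), both of which will rest on a single auxiliary estimate for the polynomial $\L_n(f)$. Write $g_n=f-\L_n(f)$ and recall that $\Vert g_n\Vert_{H_{w,p}^{r,\a}}=\Vert g_n\Vert_{w,p}+\sup_{h>0}h^{-\a}\sw_r(g_n,h)_{w,p}$. I record at once the trivial bound $n^\a\sw_r(f,1/n)_{w,p}\le\st_{r,\a}(f,1/n)_{w,p}$, obtained by inserting $h=1/n$ into the supremum defining $\st_{r,\a}$.

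The central step will be the polynomial estimate
\[
  \sup_{0<h\le 1/n}\frac{\sw_r(\L_n(f),h)_{w,p}}{h^\a}\le C n^\a\sw_r\(f,\frac1n\)_{w,p}.
\]
Since $\L_n(f)\in\mathcal{P}_{n-1}$, I would invoke the Stechkin--Nikolskii inequality (Lemma~\ref{lem1}) to write $\sw_r(\L_n(f),h)_{w,p}\asymp h^{r}\Vert\Dj^r\L_n(f)\Vert_{w,p}$ for $0<h<\pi/n$. Dividing by $h^\a$ leaves the factor $h^{r-\a}$, and here the hypothesis $\a\le r$ is decisive: since $r-\a\ge0$ this factor is non-decreasing in $h$, so the supremum over $0<h\le1/n$ is attained at the endpoint $h=1/n$. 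A second use of Lemma~\ref{lem1} at $h=1/n$ converts $n^{\a-r}\Vert\Dj^r\L_n(f)\Vert_{w,p}$ back into $Cn^\a\sw_r(\L_n(f),1/n)_{w,p}$, and a triangle inequality together with the hypothesis~(\ref{eqM5_1}) --- giving $\sw_r(\L_n(f),1/n)_{w,p}\le C(\Vert g_n\Vert_{w,p}+\sw_r(f,1/n)_{w,p})\le C\sw_r(f,1/n)_{w,p}$ --- finishes the estimate. Only an \emph{upper} bound for $\L_n(f)$ is used here; no lower bound for $\sw_r(\L_n(f),1/n)_{w,p}$ is available, which is precisely why the naive equivalence $\Vert f-\L_n(f)\Vert_{H_{w,p}^{r,\a}}\asymp\st_{r,\a}(f,1/n)_{w,p}$ must fail and the summand $n^\a\sw_r(f,1/n)_{w,p}$ has to be restored on the left-hand side.

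For the upper inequality ``$\le$'' I would follow the scheme of Theorem~\ref{cor2}. The term $\Vert g_n\Vert_{w,p}\asymp\sw_r(f,1/n)_{w,p}$ is already at most $\st_{r,\a}(f,1/n)_{w,p}$, and for the seminorm I would split $\sup_{h>0}=\sup_{0<h<1/n}+\sup_{h\ge1/n}$. On the large scales the boundedness $\sw_r(g_n,h)_{w,p}\le C\Vert g_n\Vert_{w,p}$ coming from~(\ref{eqOper1}) yields a bound $Cn^\a\Vert g_n\Vert_{w,p}\le C\st_{r,\a}(f,1/n)_{w,p}$; on the small scales the triangle inequality separates $g_n$ into a contribution from $f$, directly bounded by $\st_{r,\a}(f,1/n)_{w,p}$, and a contribution from $\L_n(f)$, bounded via the central step. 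For the lower inequality ``$\ge$'' I would, for each $0<h\le1/n$, estimate $h^{-\a}\sw_r(f,h)_{w,p}\le h^{-\a}\sw_r(g_n,h)_{w,p}+h^{-\a}\sw_r(\L_n(f),h)_{w,p}$; taking the supremum over $0<h\le1/n$, the first summand is at most $\Vert f-\L_n(f)\Vert_{H_{w,p}^{r,\a}}$ and the second at most $Cn^\a\sw_r(f,1/n)_{w,p}$ by the central step. Combining the two directions yields~(\ref{eqR5}).

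The only genuinely delicate point is the central polynomial estimate, and specifically the observation that the polynomial seminorm over small scales collapses to the single scale $h=1/n$. Everything else is a routine interplay of the triangle inequality, the hypothesis~(\ref{eqM5_1}), and the contraction property~(\ref{eqOper1}).
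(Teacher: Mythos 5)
Your argument is correct and follows essentially the same route as the paper: the lower bound rests on the Stechkin--Nikolskii inequality (Lemma~\ref{lem1}) applied to the polynomial part, using $r-\a\ge0$ to collapse the small-scale supremum to the single scale $h=1/n$, and the upper bound repeats the scheme of Theorem~\ref{cor2}. The only cosmetic difference is that you run the polynomial estimate on $\L_n(f)$ itself, controlling $\sw_r(\L_n(f),1/n)_{w,p}$ via the hypothesis \eqref{eqM5_1}, whereas the paper routes it through the best $H_{w,p}^{r,\a}$-approximant $P$ and the bound $\Vert f-P\Vert_{H_{w,p}^{r,\a}}\le\Vert f-\L_n(f)\Vert_{H_{w,p}^{r,\a}}$; both are sound.
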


Note that the estimate from above in (\ref{eqR5}) can be obtained by repeating the proof of Theorem~\ref{cor2}. Concerning the estimate from below it turns out that without the first term on the left-hand side of (\ref{eqR5}) these estimates do not hold (see Proposition~\ref{pr3} below).

\begin{proof}
It is sufficient only to prove the estimate from below.
Let $h\in (0,1/n)$ be fixed and $P\in \mathcal{P}_{n-1}$, $n\in\N$, be polynomials of the best approximation in $H_{w,p}^{r,\a}$.
By Lemma~\ref{lem1}, we obtain
\begin{equation*}\label{eqth5pr2}
  \begin{split}
  h^{-\a}\widetilde{\w}_r(f,h)_{w,p}&\le h^{-\a} \(\widetilde{\w}_r(f-P,h)_{w,p}+\widetilde{\w}_r(P,h)_{w,p}\)\\
     &\le h^{-\a} \(\widetilde{\w}_r(f-P,h)_{w,p}+Ch^r\Vert \mathcal{D}^r P\Vert_{w,p}\)\\
     &\le h^{-\a} \widetilde{\w}_r(f-P,h)_{w,p}+C n^{-r+\a}\Vert \mathcal{D}^r P\Vert_{w,p} \\
     &\le \Vert f-P\Vert_{H_{w,p}^{r,\a}}+Cn^\a\widetilde{\w}_r(P,1/n)_{w,p}\\
     &\le \Vert f-P\Vert_{H_{w,p}^{r,\a}}+Cn^\a\widetilde{\w}_r(f,1/n)_{w,p}\\
     &\le \Vert f-\L_{n}(f)\Vert_{H_{w,{p}}^{r,\a}}+Cn^\a\widetilde{\w}_r(f,1/n)_{w,p}.
  \end{split}
\end{equation*}
Proposition~\ref{thR5} is proved.
\end{proof}

Now we show that the first term on the left-hand side of (\ref{eqR5}) cannot be dropped.

\begin{proposition}\label{pr3}
   Let $1\le p\le \infty$ and $0<\a\le r$.
Suppose that $\{\L_{n}\}$ are bounded linear operators in $L_{w,p}$ satisfying (\ref{eqTrans})
and the following inequality holds for any $f\in L_{w,p}$
\begin{equation}\label{eqR6}
  \Vert f-\L_{n}(f)\Vert_{w,p}\le C\sw_r\(f,\frac 1n\)_{w,p},\quad n\in \N,
\end{equation}
where $C$ is some constant independent of $f$ and $n$.
Then for any non-trivial function $f$, i.e. $\sw_r(f,\pi)_{w,p}\neq 0$, $\Dj^r f\in H_{w,p}^{r,\a}$\,, and for any sequence
$\{\e_n\}$ with $\e_n\to 0+$ we have
\begin{equation*}
  \frac{\widetilde{\theta}_{r,\a}(f,1/n)_{w,p}}{\e_n n^\a\sw_r(f,1/n)_{w,p}+\Vert f-\L_{n}(f)\Vert_{H_{w,{p}}^{r,\a}}}\to \infty\quad\text{as}\quad n\to \infty.
\end{equation*}
\end{proposition}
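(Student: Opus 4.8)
The plan is to abbreviate $A_n=n^\a\sw_r(f,1/n)_{w,p}$ and $B_n=\Vert f-\L_n(f)\Vert_{H_{w,p}^{r,\a}}$, and to reduce the whole statement to the single asymptotic relation $B_n=o(A_n)$. Indeed, choosing $h=1/n$ in the supremum defining $\st_{r,\a}$ gives the trivial lower bound $\st_{r,\a}(f,1/n)_{w,p}\ge A_n$, so the quotient to be studied is at least
\begin{equation*}
  \frac{A_n}{\e_n A_n+B_n}=\frac{1}{\e_n+B_n/A_n}.
\end{equation*}
Since $\e_n\to 0$, this tends to $\infty$ as soon as $B_n/A_n\to 0$. (The denominator is positive because $f$ is non-trivial; see the third paragraph.) Thus everything comes down to proving $B_n=o(A_n)$.

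First I would bound $B_n$ from above. For the $L_{w,p}$-part, (\ref{eqR6}) gives $\Vert f-\L_n(f)\Vert_{w,p}\le C\sw_r(f,1/n)_{w,p}=Cn^{-\a}A_n$. For the seminorm part I use that (\ref{eqTrans}) yields $\L_n(\sD_h^r f)=\sD_h^r\L_n(f)$, so that by Lemma~\ref{dopprop}
\begin{equation*}
  |f-\L_n(f)|_{H_{w,p}^{r,\a}}=\sup_{0<h<\pi}\frac{\Vert \sD_h^r f-\L_n(\sD_h^r f)\Vert_{w,p}}{h^\a}
  \le C\sup_{0<h<\pi}\frac{\sw_r(\sD_h^r f,1/n)_{w,p}}{h^\a},
\end{equation*}
the last step being (\ref{eqR6}) applied to $\sD_h^r f$. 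Then Lemma~\ref{lemeqF} together with the trivial bound $\widetilde{K}_r(g,1/n)_{w,p}\le n^{-r}\Vert \Dj^r g\Vert_{w,p}$ (taking the competitor $g$ itself), and the fact that $\Dj^r$ and $\sD_h^r$ are commuting Fourier--Jacobi multipliers, give
\begin{equation*}
  \sw_r(\sD_h^r f,1/n)_{w,p}\le Cn^{-r}\Vert \Dj^r\sD_h^r f\Vert_{w,p}=Cn^{-r}\Vert \sD_h^r\Dj^r f\Vert_{w,p}\le Cn^{-r}h^\a|\Dj^r f|_{H_{w,p}^{r,\a}},
\end{equation*}
the last inequality being the H\"older seminorm estimate for $\Dj^r f$ via Lemma~\ref{dopprop}, finite by hypothesis. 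Dividing by $h^\a$ and taking the supremum yields $B_n\le Cn^{-\a}A_n+Cn^{-r}|\Dj^r f|_{H_{w,p}^{r,\a}}$.

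The main obstacle is the saturation lower bound $\sw_r(f,1/n)_{w,p}\ge c\,n^{-r}$, which turns the second term above into $o(A_n)$. I would establish it in two steps. First, non-triviality forces $\sw_r(f,h)_{w,p}>0$ for every $h>0$: if $\sD_t^r f=0$ for all $t$ in some interval, then for each $k\ge1$ one has $(1-R_k^\ab(\cos t))^{r/2}c_k^\ab(f)=0$ there, and since $R_k^\ab(\cos t)-1$ is a nonzero polynomial in $\cos t$ it cannot vanish identically on an interval, whence $c_k^\ab(f)=0$; this would make $f$ constant, contradicting $\sw_r(f,\pi)_{w,p}\neq0$. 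Second, the standard property that $\widetilde{K}_r(f,t)_{w,p}/t^r$ is non-increasing in $t$ (immediate from the definition of the $K$-functional) combined with Lemma~\ref{lemeqF} shows that $n^r\sw_r(f,1/n)_{w,p}$ is non-decreasing up to a constant, so that
\begin{equation*}
  \sw_r(f,1/n)_{w,p}\ge c\,n^{-r}\sw_r(f,1)_{w,p}\ge c'\,n^{-r},\qquad c'>0,
\end{equation*}
because $\sw_r(f,1)_{w,p}>0$.

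Combining the two estimates, $B_n/A_n\le Cn^{-\a}+Cn^{-r-\a}|\Dj^r f|_{H_{w,p}^{r,\a}}/\sw_r(f,1/n)_{w,p}\le Cn^{-\a}\bigl(1+|\Dj^r f|_{H_{w,p}^{r,\a}}/c'\bigr)\to 0$, which is exactly $B_n=o(A_n)$ and completes the argument. The only genuinely delicate points are the commutation $\Dj^r\sD_h^r=\sD_h^r\Dj^r$, which lets the smoothness of $\Dj^r f$ be transferred to $\sD_h^r f$ uniformly in $h$, and the saturation bound guaranteeing that $\sw_r(f,1/n)_{w,p}$ does not decay faster than $n^{-r}$; everything else is a direct application of the preceding lemmas.
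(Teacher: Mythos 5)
Your proof is correct and follows essentially the same route as the paper's: the trivial bound $\st_{r,\a}(f,1/n)_{w,p}\ge n^\a\sw_r(f,1/n)_{w,p}$ reduces the claim to $\Vert f-\L_n(f)\Vert_{H_{w,p}^{r,\a}}=o\bigl(n^\a\sw_r(f,1/n)_{w,p}\bigr)$, which both arguments derive from the commutation $\Dj^r\sD_h^r=\sD_h^r\Dj^r$, the bound $\sw_r(g,1/n)_{w,p}\le Cn^{-r}\Vert\Dj^r g\Vert_{w,p}$ via the $K$-functional, and the saturation inequality $\sw_r(f,1/n)_{w,p}\ge c\,n^{-r}$. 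The only differences are presentational: you argue directly where the paper argues by contradiction, and you make explicit the saturation step (the paper's inequality (\ref{contr++}), whose displayed form appears to contain a typo) together with the positivity of $\sw_r(f,1)_{w,p}$, which the paper leaves implicit.
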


\begin{proof}
Taking into account the inequality $\widetilde{\theta}_{r,\a}(f,1/n)_{w,p}\ge n^\a \sw_r(f,1/n)_{w,p}$,
we only need to prove
\begin{equation*}
  \frac{\Vert f-\L_{n}(f)\Vert_{H_{w,{p}}^{r,\a}}}{n^\a\sw_r(f,1/n)_{w,p}}\to 0 \quad\text{as}\quad n\to \infty.
\end{equation*}

Suppose to the contrary that there exists a constant $C>0$ and a sequence of natural numbers $\{n_k\}$, $n_k\to\infty$ as $k\to\infty$,  such that
\begin{equation}\label{eqpr3pr2}
  n_k^\a   \sw_r(f,1/{n_k})_{w,p}   \le C    \Vert f-\L_{n_k}(f)\Vert_{H_{w,{p}}^{r,\a}}.
\end{equation}

By Lemma~\ref{lemeqF}, we have
\begin{equation}\label{eqpr3pr3}
  \sw_r\(f,\frac1{n}\)_{w,p}\le \frac C{n^r} \Vert \Dj^r f\Vert_{w,p}.
\end{equation}
Hence, by Lemma~\ref{dopprop} and~\eqref{eqpr3pr3}, we obtain
\begin{equation}\label{eqpr3pr4}
\begin{split}
  \sw_r\(f,\frac 1n\)_{H_{w,p}^{r,\a}}&=\sw_r\(f,\frac1n\)_{w,p}+\sup_{0<h<\pi}\frac{\sw_r(\sD_h^r f,\frac1n)_{w,p}}{h^\a}\\
  &\le \frac{C}{n^r}\(\Vert \Dj^r f\Vert_{w,p}+\sup_{0<h<\pi}\frac{\Vert \Dj^r\sD_h^r f\Vert_{w,p}}{h^\a}\)\\
  &= \frac{C}{n^r}\(\Vert \Dj^r f\Vert_{w,p}+\sup_{0<h<\pi}\frac{\Vert \sD_h^r\Dj^r f\Vert_{w,p}}{h^\a}\)\\
  &=\frac{C}{n^r} \Vert \Dj^r f\Vert_{H_{w,p}^{r,\a}}.
\end{split}
\end{equation}
Inequalities (\ref{eqpr3pr4}) together with (\ref{eqR6}) and Remark~\ref{rem+}  imply that
\begin{equation}\label{eqpr3pr44}
  \Vert f-\L_n(f)\Vert_{H_{w,p}^{r,\a}}\le C \sw_r\(f,\frac 1n\)_{H_{w,p}^{r,\a}}\le \frac{C}{n^r} \Vert \Dj^r f\Vert_{H_{w,p}^{r,\a}}.
\end{equation}
Thus, combining (\ref{eqpr3pr2}) and (\ref{eqpr3pr44}), we obtain
\begin{equation}\label{contr+}
  \sw_r\(f,1/{n_k}\)_{w,p}\le C {n_k}^{-r-\a}\Vert \Dj^r f\Vert_{H_{w,p}^{r,\a}}.
\end{equation}
At the same time it is easy to verify (see, e.g.~\cite{Pl14} and \eqref{eqTildeqKw}) that
\begin{equation}\label{contr++}
  \frac{\sw_r\(f,1/{n}\)_{w,p}}{n^r}\le C\sw_r\(f,\frac1n\)_{w,p},\quad n\in \N.
\end{equation}

Finally, combining \eqref{contr+} and \eqref{contr++}, we derive $0<C<1/n_k^\a$ which is a contradiction.


\end{proof}

\textbf{Example.} As an application of Theorem~\ref{th2}, let us consider the Durrmeyer-Bernstein polynomial operators $M_n$, which are denoted for
$f\in L_p[0,1]$, $1\le p\le\infty$, by
\begin{equation*}
M_n(f,x)=\sum_{k=0}^n P_{n,k}(x)(n+1)\int_0^1 P_{n,k}(y)f(y)\mathrm{d}y,
\end{equation*}
where
\begin{equation*}
P_{n,k}(x)=\binom{n}{k}x^k(1-x)^{n-k}.
\end{equation*}

In~\cite{ChDiIv}, it was proved that for any $f\in L_p[0,1]$
\begin{equation*}
  \Vert f-M_n(f)\Vert_{L_p[0,1]}\asymp \inf_g\left\{\Vert f-g\Vert_{L_p[0,1]}+\frac1n \left\Vert  \frac{\mathrm{d}}{\mathrm{d}x}x(1-x)\frac{\mathrm{d}}{\mathrm{d}x}g\right\Vert_{L_p[0,1]}\right\},\quad n\in\N.
\end{equation*}
By Lemma~\ref{lemeqF}, after the affine transform $[-1,1]\mapsto [0,1]$, we get
the following two-sided estimate:
\begin{equation}\label{eqDurBer}
  \Vert f-M_n(f)\Vert_{L_p[0,1]}\asymp \widetilde{\w}_2\(f,\frac1{\sqrt n}\)_{L_p[0,1]},\quad n\in\N,
\end{equation}
where $\widetilde{\w}_2\(f,h\)_{L_p[0,1]}$ is the corresponding modification of $\widetilde{\w}_2\(f,h\)_{p}$ related to the interval $[0,1]$.
Note that the modulus  $\widetilde{\w}_2\(f,h\)_{L_p[0,1]}$ can be computed by the following formula:
\begin{equation*}
  \widetilde{\w}_2\(f,h\)_{L_p[0,1]}=\sup_{0<\d<h}\Vert \bar{{\D}}_\d f\Vert_{L_p[0,1]},
\end{equation*}
where
\begin{equation}\label{eqTranslw2}
  \bar{\D}_h f(x)=f(x)-\frac1\pi\int_{-1}^1 f\(2\sin^2\frac h2-x\cos h-u\sqrt{(3+2x-x^2)(1-u^2)}\)\frac{\mathrm{d}u}{\sqrt{1-u^2}}.
\end{equation}

It is easy to see that the operators $M_n$, $n\in \N$, satisfy condition (\ref{eqTrans}) (see, for example,~\cite[Ch. 10, \S\,8]{DL}).
Thus, by Theorem~\ref{th2} and (\ref{eqDurBer}), we obtain the following strong converse inequality:
\begin{equation}\label{eqMn}
  \Vert f-M_n(f)\Vert_{H_p^{r,\a}[0,1]}\asymp \widetilde{\w}_2\(f,\frac1{\sqrt n}\)_{H_p^{r,\a}[0,1]},\quad n\in\N.
\end{equation}



From inequality (\ref{eqMn}), Theorem~\ref{lem4}, and Remark~\ref{remReplace} one can deduce
\begin{corollary}
  Let $1\le p\le\infty$, $0<\a\le r$, and $0<\g<2$. Then the following conditions are equivalent:

\medskip

  $(i)$  $\Vert f-M_n(f)\Vert_{H_p^{r,\a}[0,1]}=\mathcal{O}\({n^{-\g/2}}\)$, \quad $n\to\infty$,

\medskip

  $(ii)$  $\widetilde{\w}_2\(f,\d\)_{H_p^{r,\a}[0,1]}=\mathcal{O}\(\d^\g\) $, \quad $\d\to 0$,

\medskip

  $(iii)$  $E_{n}(f)_{H_{p}^{r,\a}[0,1]}=\mathcal{O}\({n^{-\g}}\)$, \quad $n\to\infty$,

\medskip

  $(iv)$  $E_{n}(f)_{L_p[0,1]}=\mathcal{O}\({n^{-\g-\a}}\)$, \quad $n\to\infty$.

\end{corollary}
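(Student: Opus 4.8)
The plan is to establish the four equivalences as a cycle anchored at condition $(ii)$, since the modulus $\widetilde{\w}_2(f,\d)_{H_p^{r,\a}[0,1]}$ is the quantity that simultaneously controls the operator error, the best approximation in the H\"older space, and (through Theorem~\ref{lem4}) the best approximation in $L_p$. All of the Section~3 machinery transfers to the present unweighted setting on $[0,1]$ via the affine transform used in the Example, so I may invoke Theorem~\ref{th3}, Theorem~\ref{thR4con}, Theorem~\ref{lem4} and Remark~\ref{remReplace} directly.

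First, for $(i)\Leftrightarrow(ii)$ I would invoke the strong converse equivalence~\eqref{eqMn}, namely $\Vert f-M_n(f)\Vert_{H_p^{r,\a}[0,1]}\asymp\widetilde{\w}_2(f,1/\sqrt n)_{H_p^{r,\a}[0,1]}$. Substituting $\d=1/\sqrt n$ turns the rate $n^{-\g/2}$ in $(i)$ exactly into $\d^\g$, so $(i)$ is equivalent to $(ii)$ restricted to the sequence $\d=1/\sqrt n$. Because $\widetilde{\w}_2(f,\cdot)_{H_p^{r,\a}}$ is nondecreasing in its second argument (being a supremum over an expanding set) and $n^{-\g/2}\asymp(n+1)^{-\g/2}$, a routine monotonicity interpolation between consecutive values $1/\sqrt{n+1}\le\d\le 1/\sqrt n$ upgrades this to $(ii)$ for all $\d\to 0+$.

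Next, for $(ii)\Leftrightarrow(iii)$ I would use the direct and inverse theorems with $k=2$, which by Remark~\ref{remReplace} hold with $\widetilde{\w}_2(f,\d)_{H_p^{r,\a}}$ in place of $\widetilde{\psi}_{k,r,\a}$. The direct estimate (Theorem~\ref{th3}) gives $E_n(f)_{H_p^{r,\a}}\le C\,\widetilde{\w}_2(f,1/n)_{H_p^{r,\a}}$, so $(ii)$ immediately yields $(iii)$. For the converse the inverse estimate (Theorem~\ref{thR4con}) gives $\widetilde{\w}_2(f,1/n)_{H_p^{r,\a}}\le C\,n^{-2}\sum_{\nu=1}^n \nu\,E_\nu(f)_{H_p^{r,\a}}$; feeding in $(iii)$ leaves the sum $n^{-2}\sum_{\nu=1}^n \nu^{1-\g}$, and here the hypothesis $0<\g<2$ is precisely what forces $\sum_{\nu=1}^n\nu^{1-\g}\asymp n^{2-\g}$, producing the clean bound $\mathcal{O}(n^{-\g})$ and hence $(ii)$ along $\d=1/n$, extended to all $\d$ by monotonicity. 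The restriction $\g<2$ is the one genuinely delicate point, and I expect it to be the main obstacle: at $\g=2$ the sum only contributes a logarithmic factor and the converse rate degrades, so this step is exactly where the admissible range of $\g$ is dictated.

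Finally, for $(iii)\Leftrightarrow(iv)$ I would quote the Corollary following Theorem~\ref{lem4}, whose two assertions are precisely $E_n(f)_{H_p^{r,\a}}=\mathcal{O}(n^{-\g})$ and $E_n(f)_{L_p[0,1]}=\mathcal{O}(n^{-\g-\a})$, valid for every $\g>0$ and in particular on $0<\g<2$. Chaining $(i)\Leftrightarrow(ii)\Leftrightarrow(iii)\Leftrightarrow(iv)$ then finishes the argument. The only bookkeeping to watch is the interplay of the two scalings — the argument $1/\sqrt n$ of the modulus in~\eqref{eqMn} versus the argument $1/n$ in the direct and inverse theorems — which is exactly what converts the exponent $\g/2$ in $(i)$ into $\g$ in $(iii)$ and $\g+\a$ in $(iv)$.
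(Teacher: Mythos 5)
Your proposal is correct and follows exactly the route the paper indicates: the strong converse inequality~\eqref{eqMn} for $(i)\Leftrightarrow(ii)$, the direct and inverse theorems (Theorem~\ref{th3} and Theorem~\ref{thR4con} via Remark~\ref{remReplace}, with the summation argument where $\g<2$ enters) for $(ii)\Leftrightarrow(iii)$, and Theorem~\ref{lem4} and its corollary for $(iii)\Leftrightarrow(iv)$. The paper only sketches this by naming the ingredients, and your write-up fills in the same steps, including correctly locating the role of the restriction $0<\g<2$ in the inverse estimate.
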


\begin{remark}\label{remGasper}
  If $\a\ge \b>-1$, $\a+\b>-1$, and $k$ and $r$ are even numbers, then  all results of Subsection~2.1, Section~3, and Section~4 remain true  (see~\cite{AW}, \cite{Ga} and the remark in~\cite{Rus}).
\end{remark}

\section{The H\"older spaces with respect to the Ditzian-Totik moduli of smoothness (second approach)}

\subsection{Preliminary remarks and auxiliary results}

In this section, we use another approach to the problem of approximation of functions by algebraic polynomials in the H\"older spaces. The main point of this approach is to consider the H\"older spaces generated by the Ditzian-Totik moduli of smoothness, in which we can deal with the case $0<p<1$.


For simplicity, we consider the unweighted case. However, most of our results are valid with some  Jacobi weights, too.


Now let us introduce the necessary notations. We  denote by $I$  an interval of the real line and by $\vp$ an admissible function with respect to $I$ in the sense of Ditzian-Totik (see~\cite[p. 8]{DT}).
Let $L_p(I)$, $0<p<\infty$, be the usual Lebesgue spaces with the (quasi-)norm
$\Vert f\Vert_p=\(\int_I |f(x)|^p {\rm d}x\)^{1/p}$ and $L_\infty(I)=C(I)$ with the norm $\Vert f\Vert_\infty=\max_{x\in I}|f(x)|$.

For a function $f\in L_p(I)$, $0<p\le\infty$, and $r\in\N$, the Ditzian-Totik modulus of smoothness is given by
\begin{equation*}
\w_r^\vp (f,\d)_p=\sup_{|h|\le \d}\Vert \D_{h\vp}^r f\Vert_p,
\end{equation*}
where $\vp$ is an admissible function for an interval $I$ in the sense of Ditzian-Totik and
\begin{equation*}
\D_{h\vp(x)}^r f(x)=\left\{
                   \begin{array}{ll}
                     \displaystyle \sum_{k=0}^r (-1)^k\binom{r}{k}f\(x+\(\frac r2-k\)h\vp(x)\), & \hbox{$x\pm \frac r2 h\vp(x)\in I$,} \\
                     \displaystyle 0, & \hbox{otherwise.}
                   \end{array}
                 \right.
\end{equation*}

Now we are able to define the H\"older spaces with respect to the Ditzian-Totik modulus of smoothness.
We will say that $f\in H_p^{r,\a,\vp}(I)$, $I\subset \R$, $0<\a\le r$, $r\in\N$, $0<p\le\infty$, if $f\in L_p(I)$ and
\begin{equation*}
  \Vert f\Vert_{H_p^{r,\a,\vp}}=\Vert f\Vert_{H_p^{r,\a,\vp}(I)}=\Vert f\Vert_{L_p(I)}+|f|_{H_p^{r,\a,\vp}(I)}<\infty,
\end{equation*}
where
\begin{equation*}
  |f|_{H_p^{r,\a,\vp}}=|f|_{H_p^{r,\a,\vp}(I)}=\sup_{0<h<1}\frac{\w_r^\vp(f,h)_p}{h^\a}.
\end{equation*}

\begin{proposition}\label{propEq}
  Let $f\in H_p^{r,\a,\vp}[-1,1]$, $1<p<\infty$, $0<\a\le r$,  $r\in \N$, and $\vp(x)=\sqrt{1-x^2}$. Then $f\in H_p^{r,\a}$ and
  \begin{equation*}
   \Vert f\Vert_{H_p^{r,\a,\vp}}\asymp \Vert f\Vert_{H_p^{r,\a}},
  \end{equation*}
  where $H_p^{r,\a}=H_{p,w_{0,0}}^{r,\a}$ is related to the Jacobi translation $T_h^{(0,0)}$.
\end{proposition}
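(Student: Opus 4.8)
The plan is to prove the equivalence $\Vert f\Vert_{H_p^{r,\a,\vp}}\asymp \Vert f\Vert_{H_p^{r,\a}}$ by reducing it to the known equivalence of the underlying moduli of smoothness in $L_p$. The essential point is that for $1<p<\infty$ and $\vp(x)=\sqrt{1-x^2}$, the Ditzian-Totik modulus $\w_r^\vp(f,\d)_p$ and the Jacobi-translation modulus $\sw_r(f,\d)_p=\sw_r^{(0,0)}(f,\d)_p$ are equivalent. This is the natural bridge: both moduli are known (via Lemma~\ref{lemeqF} on the one side, and the standard Ditzian-Totik theory on the other) to be comparable to the same $K$-functional built on the differential operator $\Dj=\Dj^1_{w_{0,0}}$, since the operator $\frac{\mathrm{d}}{\mathrm{d}x}(1-x^2)\frac{\mathrm{d}}{\mathrm{d}x}$ is precisely the Legendre operator associated with $\vp(x)=\sqrt{1-x^2}$. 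Thus my first step would be to record the two-sided estimate
\begin{equation*}
  \w_r^\vp(f,\d)_p\asymp \sw_r(f,\d)_p,\quad 0<\d<1,
\end{equation*}
valid for $1<p<\infty$ when $\vp(x)=\sqrt{1-x^2}$, citing the $K$-functional characterizations available from \cite{DT} for the left-hand modulus and from Lemma~\ref{lemeqF} together with \cite{Rus} for the right-hand modulus. The restriction $1<p<\infty$ enters exactly here, as the $K$-functional equivalence for the Ditzian-Totik modulus in this weighted/unweighted setting requires $p$ strictly between $1$ and $\infty$.

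Once the moduli equivalence is in hand, the proof is essentially formal. I would divide by $h^\a$ and take suprema: since $\w_r^\vp(f,h)_p\asymp \sw_r(f,h)_p$ uniformly in $h\in(0,1)$, dividing by $h^\a$ and taking $\sup_{0<h<1}$ gives
\begin{equation*}
  |f|_{H_p^{r,\a,\vp}}=\sup_{0<h<1}\frac{\w_r^\vp(f,h)_p}{h^\a}\asymp \sup_{0<h<1}\frac{\sw_r(f,h)_p}{h^\a}.
\end{equation*}
By Lemma~\ref{dopprop}, the right-hand supremum over $0<h<1$ differs from $\sup_{h>0}\sw_r(f,h)_p/h^\a=|f|_{H_p^{r,\a}}$ only by the contribution of $h$ near and beyond $\pi$, which is controlled by the same quantity; I would invoke Lemma~\ref{dopprop} (or the monotonicity of $\sw_r$ in $h$) to absorb the tail and conclude $|f|_{H_p^{r,\a,\vp}}\asymp |f|_{H_p^{r,\a}}$. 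Adding the common term $\Vert f\Vert_p=\Vert f\Vert_{L_p[-1,1]}=\Vert f\Vert_{w_{0,0},p}$ to both seminorms then yields the full-norm equivalence, and in particular $f\in H_p^{r,\a}$ whenever $f\in H_p^{r,\a,\vp}$.

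The main obstacle is the moduli equivalence itself: establishing $\w_r^\vp(f,\d)_p\asymp \sw_r(f,\d)_p$ is the real content, and everything else is bookkeeping. I expect this to rest on matching the $K$-functionals, which requires checking that the Ditzian-Totik $K$-functional $K_{r,\vp}(f,\d^r)_p$ associated with $\vp(x)=\sqrt{1-x^2}$ coincides (up to constants) with the $K$-functional $\widetilde K_r(f,\d)_p$ from the differential operator $\Dj^r$ used throughout Section~2. Since $\Dj$ with weight $w_{0,0}$ is the Legendre operator whose natural step-weight is exactly $\vp=\sqrt{1-x^2}$, this matching is standard but must be cited carefully from \cite{DT} and \cite{Di07}; the delicate issue is that the Ditzian-Totik $K$-functional typically uses the $r$th-order derivative weighted by $\vp^r$, whereas $\widetilde K_r$ uses the fractional power $\Dj^{r/2}$, so one should confirm these agree for the even integer orders $r\in\N$ considered here, which is precisely the regime where the comparison is classical.
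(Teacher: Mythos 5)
Your overall route coincides with the paper's: reduce the statement to a comparison of the Ditzian--Totik $K$-functional $K_r^\vp(f,t)_p=\inf_g\{\Vert f-g\Vert_p+t^r\Vert\vp^r g^{(r)}\Vert_p\}$ with the Jacobi $K$-functional $\widetilde K_r(f,t)_p$, cite Dai--Ditzian for that comparison, and then pass to the moduli via Lemma~\ref{lemeqF} and the Ditzian--Totik equivalence. However, your headline intermediate claim --- the clean two-sided estimate $\w_r^\vp(f,\d)_p\asymp\sw_r(f,\d)_p$ for $0<\d<1$ --- is false, and this is precisely the ``delicate issue'' you flag at the end but do not resolve. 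Counterexample: take $r=2$ and $f(x)=x$. Then $\D_{h\vp}^2f\equiv0$, so $\w_2^\vp(f,\d)_p=0$; but $T_t^{(0,0)}x=R_1^{(0,0)}(\cos t)\,x=x\cos t$, hence $\sD_t^2x=(1-\cos t)x$ and $\sw_2(x,\d)_p\asymp\d^2\Vert x\Vert_p>0$. Structurally, $K_r^\vp$ annihilates $\mathcal{P}_{r-1}$ while $\widetilde K_r$ only annihilates constants, so no two-sided equivalence of the moduli alone can hold. The correct statement, which is what Theorem~7.1 of~\cite{DaDi} (valid for $1<p<\infty$ via Littlewood--Paley theory) actually gives and what the paper uses, carries a correction term:
\begin{equation*}
C^{-1}\,\w_r^\vp(f,\d)_p\le \sw_r(f,\d)_p\le C\left(\w_r^\vp(f,\d)_p+\d^{r}\Vert f\Vert_p\right).
\end{equation*}

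The gap is repairable, and the repair explains why the proposition is about full norms rather than seminorms: dividing the right-hand inequality by $h^\a$ and taking $\sup_{0<h<1}$ produces the extra term $\sup_{0<h<1}h^{r-\a}\Vert f\Vert_p\le\Vert f\Vert_p$, which is admissible exactly because $\a\le r$ and because $\Vert f\Vert_p$ is already present in both norms; the left-hand inequality gives $|f|_{H_p^{r,\a,\vp}}\le C|f|_{H_p^{r,\a}}$ directly, and your use of Lemma~\ref{dopprop} and monotonicity to reconcile the ranges of $h$ is fine. Two further small corrections: the restriction $1<p<\infty$ is needed for the Dai--Ditzian comparison of the two $K$-functionals, not for $K_r^\vp\asymp\w_r^\vp$ (which holds for all $1\le p\le\infty$ by~\cite[Ch.~2]{DT}); and there is no need to restrict to even $r$ --- the fractional definition of $\Dj^r$ and the integer-order $\vp^r g^{(r)}$ are compared directly by the cited theorem for all $r\in\N$.
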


\begin{proof}
From Theorem 7.1 in~\cite{DaDi}, it follows that there exists a constant $C$ such that for any $f\in L_p$ and
$t\in (0,t_0)$
\begin{equation*}
C^{-1} K_r^\vp(f,t)_p\le  \widetilde{K}_r(f,t)_p \le C\(K_r^\vp(f,t)_p+t^r\Vert f\Vert_p\),
\end{equation*}
where $
K_r^\vp(f,t)_p=\inf_g\{\Vert f-g\Vert_p+t^r \Vert \vp^r g^{(r)}\Vert_p\}
$ is the $K$-functional related to a function $\vp$.

It is well-known (see~\cite[Ch. 2]{DT})  that
\begin{equation}\label{eqEqKM}
   K_r^\vp(f,t)_p\asymp \w_r^\vp(f,t)_p,\quad t\in (0,t_0).
\end{equation}
Thus, (\ref{eqTildeqKw}) and (\ref{eqEqKM}) imply Proposition~\ref{propEq}.
\end{proof}

\begin{remark}
  Proposition~\ref{propEq} does not hold for $p=1$ or $p=\infty$ . Nevertheless, for any $0<\a\le 2$ the inequalities
  \begin{equation*}
  \Vert f\Vert_{H_\infty^{2,\a,\vp}}\le C\Vert f\Vert_{H_\infty^{2,\a}}
  \end{equation*}
  and
  \begin{equation*}
  \Vert f\Vert_{H_1^{2,\a}}\le C\Vert f\Vert_{H_1^{2,\a,\vp}}
  \end{equation*}
  are valid (see~\cite[Remark 7.9]{DaDi}).
\end{remark}

Proposition~\ref{propEq} implies that results on approximation in the spaces $H_p^{r,\a}$ can be transferred to $H_p^{r,\a,\vp}$ in the case $1<p<\infty$.   For example, from (\ref{eqMn})  we obtain for $1<p<\infty$ and $\vp(x)=\sqrt{x(1-x)}$ the two-sided estimate
\begin{equation*}
  \Vert f-M_n(f)\Vert_{H_p^{r,\a,\vp}[0,1]}\asymp \widetilde{\w}_2\(f,\frac1{\sqrt{n}}\)_{H_p^{r,\a}[0,1]}.
\end{equation*}
Moreover, taking into account that
\begin{equation*}
\Vert f-M_n(f)\Vert_{L_p[0,1]}\asymp \w_2^\vp\(f,\frac1{\sqrt{n}}\)_{L_p[0,1]}+\frac1n \Vert f\Vert_{L_p[0,1]}
\end{equation*}
(see (8.11) and (8.14) in~\cite{Di07}) we obtain that
\begin{equation*}
  \begin{split}
     \Vert f-M_n(f)\Vert_{H_p^{r,\a,\vp}[0,1]}&\asymp \w_2^\vp\(f,\frac1{\sqrt{n}}\)_{L_p[0,1]}\\&+\sup_{0<h<1} h^{-\a}\(\w_2^\vp\(\bar{\D}_h^r f,\frac1{\sqrt{n}}\)_{L_p[0,1]}+
\frac1n \Vert \bar{\D}_h^r f\Vert_{L_p[0,1]}\),
   \end{split}
\end{equation*}
where $\bar{\D}$ was defined by (\ref{eqTranslw2}).

Below, we collect auxiliary results, which correspond to the similar results from Section~2.1.  In what follows we let $I=[-1,1]$,
$\vp(x)=\sqrt{1-x^2}$, and $p_1=\min(p,1)$.


\begin{lemma}\label{lemHolderdt} {\rm (See~\cite{Li} and~\cite{DT88})}.
Let $0< p\le\infty$, $k,r\in\N$, and $0<\a<r<k$. Then the (quasi-)norms of a function in the spaces $H_{p}^{r,\a,\vp}$ and $H_{p}^{k,\a,\vp}$ are equivalent.
%
%
\end{lemma}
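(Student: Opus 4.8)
The plan is to establish the equivalence of the two Hölder (quasi-)norms $\Vert\cdot\Vert_{H_p^{r,\a,\vp}}$ and $\Vert\cdot\Vert_{H_p^{k,\a,\vp}}$ for $0<\a<r<k$ by reducing everything to two analytic ingredients about the Ditzian--Totik moduli $\w_r^\vp$: a one-sided domination of higher-order moduli by lower-order ones, and a Marchaud-type inequality running the reverse direction. These are the exact analogues of the two inequalities used in the proof of Lemma~\ref{lemHolder}, but now adapted to the Ditzian--Totik setting and valid in the full range $0<p\le\infty$ (including $0<p<1$). First I would recall that since $\w_k^\vp$ is built from the finite differences $\D_{h\vp}^k$, applying the operator identity $\D_{h\vp}^k=\D_{h\vp}^{k-r}\circ\D_{h\vp}^r$ together with the sub-additivity/translation bounds for these differences gives an inequality of the form
\begin{equation*}
  \w_k^\vp(f,\d)_p\le C\,\w_r^\vp(f,\d)_p,\quad k>r,\quad 0<\d<1,
\end{equation*}
with $C$ depending only on $k,r,p$; in the quasi-normed case $0<p<1$ one works with the $p$-th powers and the constant absorbs the factor $2^{1/p_1}$. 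This immediately yields the inequality $|f|_{H_p^{k,\a,\vp}}\le C|f|_{H_p^{r,\a,\vp}}$, hence $\Vert f\Vert_{H_p^{k,\a,\vp}}\le C\Vert f\Vert_{H_p^{r,\a,\vp}}$.

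For the reverse estimate I would invoke the Marchaud-type inequality for the Ditzian--Totik moduli,
\begin{equation*}
  \w_r^\vp(f,\d)_p\le C\,\d^{\,r}\(\int_\d^1\frac{\w_k^\vp(f,u)_p^{\,p_1}}{u^{\,rp_1+1}}\,{\rm d}u\)^{1/p_1}+C\,\d^{\,r}\Vert f\Vert_p,\quad k>r,\quad 0<\d<1,
\end{equation*}
which holds for all $0<p\le\infty$ (this is precisely what the cited references~\cite{Li} and~\cite{DT88} provide; for $p\ge 1$ the $\ell^{p_1}$-average collapses to the ordinary integral as in the proof of Lemma~\ref{lemHolder}). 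Dividing by $\d^\a$, taking the supremum over $0<\d<1$, and estimating the resulting integral exactly as in the scheme of Theorem~10.1 in~\cite[Ch.~2]{DL}, one obtains
\begin{equation*}
  |f|_{H_p^{r,\a,\vp}}\le C\(|f|_{H_p^{k,\a,\vp}}+\Vert f\Vert_p\),
\end{equation*}
where the condition $\a<r$ is exactly what makes the integral $\int_\d^1 u^{\,\a-r-1}\,{\rm d}u$ converge after pulling out the factor $\sup_u u^{-\a}\w_k^\vp(f,u)_p$, and the extra $\Vert f\Vert_p$ term is harmless because it is already controlled by the full norm. Combining the two directions gives the asserted equivalence of (quasi-)norms.

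The main obstacle I expect is the passage to $0<p<1$: unlike the reflexive range $1<p<\infty$ treated in Proposition~\ref{propEq}, here one cannot route through the $K$-functional equivalence~\eqref{eqTildeqKw} and must work directly with the moduli, using the $p$-triangle inequality $\Vert f+g\Vert_p^{p_1}\le\Vert f\Vert_p^{p_1}+\Vert g\Vert_p^{p_1}$ throughout and keeping the $\ell^{p_1}$-discretization of the Marchaud integral. This is why I would lean on the quasi-Banach Marchaud inequality from~\cite{Li} and~\cite{DT88} rather than reprove it; once that inequality is in hand, the combinatorial/summation part of the argument is routine and mirrors the classical trigonometric and algebraic cases. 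A secondary, purely technical point is that the Ditzian--Totik differences vanish near the endpoints of $[-1,1]$ (because of the ``otherwise $0$'' clause in the definition of $\D_{h\vp}^r$), so one should check that the composition identity $\D_{h\vp}^k=\D_{h\vp}^{k-r}\D_{h\vp}^r$ and the associated boundedness estimates are applied only on the admissible region where all the shifted arguments stay in $I$; this is standard and is already built into the cited modulus estimates.
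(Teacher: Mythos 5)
First, a point of reference: the paper does not prove Lemma~\ref{lemHolderdt} at all --- it is quoted from \cite{Li} and \cite{DT88}, where the equivalence is obtained by characterizing the H\"older/Besov seminorm through the best approximations $E_n(f)_p$, a description that does not involve the order of the modulus. Your overall architecture --- monotonicity of $\w_k^\vp$ in the order $k$ in one direction, and a Marchaud-type inequality with $p_1$-powers in the other, following the scheme of Lemma~\ref{lemHolder} and of Theorem~10.1 in \cite[Ch.~2]{DL} --- is a legitimate alternative, and your second half is carried out correctly: the convergence of $\int_\d^1 u^{(\a-r)p_1-1}\,\mathrm{d}u$ rests exactly on $\a<r$, and the extra $\Vert f\Vert_p$ term is indeed absorbed by the full (quasi-)norm.

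The genuine gap is in your first ingredient. The identity $\D_{h\vp}^k=\D_{h\vp}^{k-r}\circ\D_{h\vp}^r$ does not hold for the Ditzian--Totik differences, because the step $h\vp(x)$ depends on the point of evaluation: writing $\D^k_{u}f(x)=\D^{k-r}_u\bigl(\D^r_u f\bigr)(x)$ with $u=h\vp(x)$ frozen produces inner $r$-th differences with step $h\vp(x)$ centered at the shifted points $x+jh\vp(x)$, and these are \emph{not} of the form $\D^r_{h'\vp(y)}f(y)$ unless $\vp(x)\asymp\vp\bigl(x+jh\vp(x)\bigr)$, which degenerates near the endpoints of $I$; in the constant-step case it is translation invariance of $L_p$ that saves the argument, and that is unavailable here. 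So the inequality $\w_k^\vp(f,\d)_p\le C\,\w_r^\vp(f,\d)_p$ cannot be obtained by the composition argument you describe. It is true, but its proof requires either the $K$-functional/realization machinery (for $1\le p\le\infty$, \cite[Ch.~4]{DT}) or, uniformly in $0<p\le\infty$, the route the paper itself indicates in Section~5: combine the Jackson inequality (Lemma~\ref{lem01dt}), the Stechkin--Nikolskii inequality (Lemma~\ref{lem1dt}), and the Bernstein-type inequality $\Vert\vp^r P_n'\Vert_p\le Cn\Vert\vp^{r-1}P_n\Vert_p$ from \cite{DJL}. With that substitution your argument closes; alternatively one can bypass both ingredients by proving directly that $|f|_{H_p^{k,\a,\vp}}+\Vert f\Vert_p\asymp\sup_n n^\a E_n(f)_p+\Vert f\Vert_p$ for every $k>\a$, which is the actual content of the cited references.
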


Recall the Jackson-type theorem for the Ditzian-Totik modulus of smoothness in $L_p$-spaces (see in~\cite{DT} for $1\le p\le\infty$ and  in~\cite{DLY} for $0<p<1$).
\begin{lemma}\label{lem01dt}
Let $f\in L_p$, $0< p\le\infty$, and $k\in \N$. Then
\begin{equation*}
  E_n(f)_p\le C\w_k^\vp\(f,\frac 1n\)_p,\quad n>4k,
\end{equation*}
where $C$ is a constant independent of $n$ and $f$.
\end{lemma}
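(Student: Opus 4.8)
The statement is the Ditzian--Totik direct (Jackson) theorem, recorded here with references~\cite{DT} (for $1\le p\le\infty$) and~\cite{DLY} (for $0<p<1$), and the plan is to reduce it to a Jackson-type estimate for the corresponding $K$-functional. Recall the Peetre $K$-functional associated with $\vp$,
\[
K_k^\vp(f,t)_p=\inf_g\left\{\Vert f-g\Vert_p+t^k\Vert \vp^k g^{(k)}\Vert_p\right\},
\]
where the infimum runs over $g$ with $g^{(k-1)}$ locally absolutely continuous. By the equivalence $\w_k^\vp(f,t)_p\asymp K_k^\vp(f,t)_p$ (see~\eqref{eqEqKM}, valid for the full range $0<p\le\infty$ by~\cite{DT} and~\cite{DaDi}), the assertion becomes equivalent to
\[
E_n(f)_p\le C\,K_k^\vp\left(f,\frac1n\right)_p,\quad n>4k.
\]
Thus, given $f$, it suffices to pick a near-extremal element $g=g_n$ with $\Vert f-g\Vert_p+n^{-k}\Vert\vp^k g^{(k)}\Vert_p\le 2K_k^\vp(f,1/n)_p$ and then to approximate $f$ by a polynomial of degree below $n$ with error of the right size.

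For $1\le p\le\infty$ I would realize this through a bounded linear polynomial operator. One constructs a de la Vall\'ee Poussin--type operator $L_n$ of degree $<n$, built from a Jackson-type kernel adapted to the Chebyshev weight (as with the means $V_n$ used in Section~3, here in the unweighted case $\vp=\sqrt{1-x^2}$), enjoying both the uniform bound $\Vert L_n h\Vert_p\le C\Vert h\Vert_p$ and the smoothing estimate $\Vert g-L_n g\Vert_p\le C n^{-k}\Vert\vp^k g^{(k)}\Vert_p$ for smooth $g$. Then, with $g=g_n$ as above,
\[
E_n(f)_p\le\Vert f-L_n f\Vert_p\le\Vert f-g\Vert_p+\Vert L_n(f-g)\Vert_p+\Vert g-L_n g\Vert_p\le C\,K_k^\vp\left(f,\frac1n\right)_p.
\]
The only genuinely analytic ingredient here is the second, Favard/Jackson-type inequality for the weighted derivative $\vp^k g^{(k)}$; the rest is the quasi-triangle inequality and operator boundedness.

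The case $0<p<1$ is the main obstacle, since $L_p$ is then only quasi-normed and bounded linear polynomial operators of the above kind are no longer available, so both duality and the usual Nikol'skii-type arguments break down. Here I would instead follow the piecewise-polynomial scheme of~\cite{DLY}: partition $[-1,1]$ by a Chebyshev-type mesh into $\asymp n$ subintervals $I_j$ whose lengths are comparable to $\vp$ on $I_j$, approximate $f$ on each $I_j$ by a local polynomial of degree $<k$ with error bounded by the matching local modulus of smoothness, and sum the $p$-th powers of these local errors via $\Vert u+v\Vert_p^p\le\Vert u\Vert_p^p+\Vert v\Vert_p^p$ to recover $\w_k^\vp(f,1/n)_p^p$. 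The delicate point is pasting the local pieces into a single algebraic polynomial of degree $<n$ while keeping the global error under control; this is achieved by smoothing/interpolating the piecewise polynomial and absorbing the transition terms, and it is exactly at this stage that the hypothesis $n>4k$ enters, guaranteeing enough subintervals for the construction to be carried out.
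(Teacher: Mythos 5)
The paper does not prove this lemma at all: it is explicitly ``recalled'' as a known result, with the proof delegated to the cited literature (Ditzian--Totik~\cite{DT} for $1\le p\le\infty$ and DeVore--Leviatan--Yu~\cite{DLY} for $0<p<1$). Your outline is essentially a summary of how those references proceed, so there is nothing to compare against in the paper itself; judged on its own, the roadmap (reduction to a $K$-functional bound realized by a bounded de la Vall\'ee Poussin--type operator for $p\ge1$, and the piecewise-polynomial construction of~\cite{DLY} for $p<1$) is the right one.

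One assertion in your write-up is, however, genuinely false and worth flagging because it is the whole reason the two cases must be treated differently: the equivalence $\w_k^\vp(f,t)_p\asymp K_k^\vp(f,t)_p$ is \emph{not} ``valid for the full range $0<p\le\infty$.'' For $0<p<1$ the $K$-functional $K_k^\vp(f,t)_p$ is identically zero for every $f\in L_p$ (this is the degeneracy result of Ditzian--Hristov--Ivanov, cited in the paper as~\cite{DHI}, and it is why realizations of $K$-functionals were introduced); the paper's equation~\eqref{eqEqKM} is invoked only inside Proposition~\ref{propEq}, which is restricted to $1<p<\infty$. Your argument survives because you do not actually use the $K$-functional for $0<p<1$ --- you correctly switch to the local polynomial scheme of~\cite{DLY}, where the errors are assembled via the $p$-th power quasi-triangle inequality --- but the sentence claiming the equivalence for all $0<p\le\infty$ should be deleted or restricted to $1\le p\le\infty$. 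With that correction, your sketch is a faithful account of the standard proofs that the paper is citing.
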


The following Stechkin-Nikolskii type inequality corresponds to Lemma~\ref{lem1} (see~\cite{HuL}).

\begin{lemma}\label{lem1dt}
Let $0<p\le\infty$, $n\in\N$, $0<h\le 1/n$, and $r\in\N$. Then for any algebraic polynomial $P_n \in \mathcal{P}_n$ we have
\begin{equation*}
  h^r\Vert \vp^r P_n^{(r)}\Vert_p\asymp  \w_r^\vp(P_n,h)_p,
\end{equation*}
where $\asymp$ is a two-sided inequality with absolute constants independent of $P_n$ and $h$.
Moreover, if $P_n$ is a polynomial of the best approximation of a function $f\in L_p$, then
\begin{equation*}
\w_r^\vp(P_n,h)_p\le C\w_r^\vp\(f,\frac1n\)_p,
\end{equation*}
where $C$ is a constant independent of $P_n$, $h$, and $f$.
\end{lemma}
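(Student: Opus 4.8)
The plan is to deduce both the two-sided equivalence and the ``moreover'' part from two standard tools of the Ditzian--Totik theory valid in the whole range $0<p\le\infty$: the weighted Bernstein inequality (see~\cite{HuL})
$$\Vert \vp^r Q_m^{(r)}\Vert_p\le Cm^r\Vert Q_m\Vert_p,\quad Q_m\in\mathcal{P}_m,$$
and the equivalence of $\w_r^\vp(f,h)_p$ with the \emph{polynomial} realization functional
$$R_r^\vp(f,h)_p=\inf_{Q\in\mathcal{P}_N}\(\Vert f-Q\Vert_p+h^r\Vert \vp^r Q^{(r)}\Vert_p\),\qquad N=\lfloor 1/h\rfloor,$$
so that $\w_r^\vp(f,h)_p\asymp R_r^\vp(f,h)_p$ (the analogue of~(\ref{eqEqKM}) holding also for $0<p<1$; see~\cite{DaDi}). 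The $K$-functional $K_r^\vp$ itself is unsuitable here, since its near-minimizer need not be a polynomial. The decisive point is that for $0<h\le 1/n$ one has $N=\lfloor 1/h\rfloor\ge n$, so $P_n$ itself is admissible in the infimum defining $R_r^\vp(P_n,h)_p$.

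For the upper estimate $\w_r^\vp(P_n,h)_p\le Ch^r\Vert\vp^r P_n^{(r)}\Vert_p$ I would simply take $Q=P_n$, obtaining $R_r^\vp(P_n,h)_p\le h^r\Vert\vp^r P_n^{(r)}\Vert_p$, and then invoke the equivalence $\w_r^\vp\le CR_r^\vp$. For the reverse Bernstein--Stechkin estimate, let $Q\in\mathcal{P}_N$ be a near-minimizer, so that $\Vert P_n-Q\Vert_p+h^r\Vert\vp^r Q^{(r)}\Vert_p\le 2R_r^\vp(P_n,h)_p\le C\w_r^\vp(P_n,h)_p$. Writing $P_n=Q+(P_n-Q)$ and using the (quasi-)triangle inequality (with $p_1=\min(p,1)$), I would split
$$h^r\Vert\vp^r P_n^{(r)}\Vert_p\le C\(h^r\Vert\vp^r(P_n-Q)^{(r)}\Vert_p+h^r\Vert\vp^r Q^{(r)}\Vert_p\),$$
where the second summand is already $\le C\w_r^\vp(P_n,h)_p$. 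For the first, the key observation is that $P_n-Q\in\mathcal{P}_N$, so the Bernstein inequality applies and gives $\Vert\vp^r(P_n-Q)^{(r)}\Vert_p\le CN^r\Vert P_n-Q\Vert_p\le Ch^{-r}\Vert P_n-Q\Vert_p$; hence $h^r\Vert\vp^r(P_n-Q)^{(r)}\Vert_p\le C\Vert P_n-Q\Vert_p\le C\w_r^\vp(P_n,h)_p$. Combining the two bounds finishes the equivalence. It is precisely the passage through the polynomial realization that makes the factors $N^r$ and $h^{-r}$ cancel, and this is the step I expect to be the main obstacle: it forces one to work with the realization rather than the $K$-functional and, for $0<p<1$, to justify both its quasi-norm version (via the Jackson theorem of Lemma~\ref{lem01dt} and the Bernstein inequality valid in that range) and the $p$-triangle inequality.

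Finally, for the ``moreover'' part I would use only the subadditivity and boundedness of the modulus: since $P_n=f-(f-P_n)$,
$$\w_r^\vp(P_n,h)_p\le C\(\w_r^\vp(f,h)_p+\w_r^\vp(f-P_n,h)_p\)\le C\(\w_r^\vp(f,h)_p+\Vert f-P_n\Vert_p\).$$
By monotonicity $\w_r^\vp(f,h)_p\le\w_r^\vp(f,1/n)_p$ for $h\le 1/n$, while $\Vert f-P_n\Vert_p=E_n(f)_p\le C\w_r^\vp(f,1/n)_p$ by Lemma~\ref{lem01dt}. This yields $\w_r^\vp(P_n,h)_p\le C\w_r^\vp(f,1/n)_p$, as required.
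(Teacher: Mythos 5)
Your argument is sound and, in substance, follows the route by which this equivalence is actually established in the literature; note, however, that the paper does not prove Lemma~\ref{lem1dt} at all --- it simply quotes the result from Hu and Liu~\cite{HuL} --- so there is no internal proof to compare against. Your reduction to the polynomial realization functional is the right move: taking $Q=P_n$ (admissible since $\lfloor 1/h\rfloor\ge n$ when $h\le 1/n$) gives the upper bound, and the near-minimizer together with the weighted Bernstein inequality $\Vert \vp^r Q_m^{(r)}\Vert_p\le Cm^r\Vert Q_m\Vert_p$ (available for all $0<p\le\infty$, e.g.\ by iterating the inequality from~\cite{DJL} quoted in Section~5 of the paper) gives the lower bound, with the factors $N^r$ and $h^r$ cancelling exactly as you describe. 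Two caveats. First, the attribution: the equivalence $\w_r^\vp(f,h)_p\asymp R_r^\vp(f,h)_p$ in the range $0<p<1$ is due to Ditzian, Hristov and Ivanov~\cite{DHI} (see also~\cite{Di07}), not to~\cite{DaDi}, which concerns $1<p<\infty$; and you should be aware that the hard direction of that realization theorem (namely $R_r^\vp\le C\w_r^\vp$ with a polynomial near-minimizer of degree $\sim 1/h$) already contains essentially all of the Stechkin--Nikolskii content of the present lemma, so your proof is a legitimate reduction to a known theorem rather than a from-scratch derivation. Second, in the ``moreover'' part, Lemma~\ref{lem01dt} is stated only for $n>4k$; for the finitely many remaining values of $n$ a separate (trivial, but non-empty) remark is needed to keep the constant independent of $n$. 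Neither point is a genuine gap.
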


\subsection{Properties of the best approximation in $H_p^{r,\a,\vp}$. Direct and inverse theo\-rems}

In this subsection we present results which correspond to the similar results from Section~3.
The proofs can easily be obtained by using the schemes of proofs for the corresponding results from Section 3 and the above auxiliary results.

As above, we denote the error of the best approximation in the H\"older space ${H_{p}^{r,\a,\vp}}$ by
\begin{equation*}
E_n(f)_{H_{p}^{r,\a,\vp}}=\inf_{P\in \mathcal{P}_{n-1}}\Vert f-P\Vert_{H_{p}^{r,\a,\vp}},\quad n\in \N.
\end{equation*}
A polynomial $P\in \mathcal{P}_{n-1}$ is called a polynomial of the best approximation of $f\in H_{p}^{r,\a,\vp}$ if
\begin{equation*}
\Vert f-P\Vert_{H_{p}^{r,\a,\vp}}=E_n(f)_{H_{p}^{r,\a,\vp}}.
\end{equation*}

As above, we have the following connection between the errors of the best approximation in the spaces  ${H_p^{r,\a,\vp}}$ and $L_p$:

\begin{theorem}\label{lem4dt}
 Let $f\in H_p^{r,\a,\vp}$, $0<p\le \infty$, $0<\a\le r$, and $r\in \N$. Then
\begin{equation*}
  C^{-1} n^\a E_n(f)_p \le  E_n(f)_{H_p^{r,\a,\vp}}\le C\(n^\a E_n(f)_p+
\(\sum_{\nu=n}^\infty \nu^{\a {p_1}-1}E_\nu(f)_p^{p_1}\)^\frac1{p_1}\),\quad n\in\N,
\end{equation*}
where $C$ is a positive constant independent of $n$ and $f$.
\end{theorem}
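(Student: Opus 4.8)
The plan is to mimic the proof of Theorem~\ref{lem4}, replacing every ingredient from the Jacobi-translation setting with its Ditzian-Totik counterpart and carrying out the dyadic decomposition in the $\ell_{p_1}$-summation form dictated by the quasi-norm when $0<p<1$. First I would establish the lower bound, which is the easy direction. Taking $P_n\in\mathcal{P}_{n-1}$ to be a polynomial of best approximation of $f$ in $H_p^{r,\a,\vp}$ and invoking Lemma~\ref{lem01dt}, one writes
\begin{equation*}
n^\a E_n(f)_p\le Cn^\a \w_r^\vp(f-P_n,1/n)_p\le C\sup_{0<h\le 1/n}\frac{\w_r^\vp(f-P_n,h)_p}{h^\a}\le CE_n(f)_{H_p^{r,\a,\vp}},
\end{equation*}
exactly as in (\ref{eqlem4pr1}); the only subtlety for $n\le 4k$ is absorbed into the constant.

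For the upper bound I would again set $P_n\in\mathcal{P}_{n-1}$ to be a best approximation of $f$ in $L_p$, fix $m$ with $2^{m-1}\le n<2^m$, and use the decomposition $f=P_{2^m}+\sum_{\nu=m}^\infty U_{2^\nu}$ with $U_{2^\nu}=P_{2^{\nu+1}}-P_{2^\nu}$, valid in $L_p$ under the assumption that the series $\sum_\nu \nu^{\a p_1-1}E_\nu(f)_p^{p_1}$ converges. Splitting $|f-P_n|_{H_p^{r,\a,\vp}}$ into $S_1+S_2$ as in (\ref{eqlem4.1}), the term $S_1=|P_{2^m}-P_n|_{H_p^{r,\a,\vp}}$ is handled by the Stechkin--Nikolskii inequality (Lemma~\ref{lem1dt}) and the monotonicity property $\w_r^\vp(T_h g,\cdot)\le\w_r^\vp(g,\cdot)$, giving $S_1\le C2^{\a m}\Vert P_{2^m}-P_n\Vert_p\le Cn^\a E_n(f)_p$, mirroring (\ref{eqlem4.2}).

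The main obstacle, and the genuinely new feature relative to Theorem~\ref{lem4}, is the tail sum $S_2$: because the H\"older (quasi-)norm is only $p_1$-subadditive for $0<p<1$, one cannot simply sum the $|U_{2^\nu}|_{H_p^{r,\a,\vp}}$ linearly but must raise everything to the power $p_1$. The plan is to use the $p_1$-triangle inequality to bound $S_2^{p_1}\le\sum_{\nu=m}^\infty |U_{2^\nu}|_{H_p^{r,\a,\vp}}^{p_1}$, then estimate each summand by splitting the supremum over $h$ at the scale $2^{-\nu-1}$ precisely as in (\ref{eqcor1pr1}): for small $h$ apply Lemma~\ref{lem1dt} to get $\sup_{0<h\le 2^{-\nu-1}}h^{-\a}\w_r^\vp(U_{2^\nu},h)_p\le C2^{\a\nu}\Vert U_{2^\nu}\Vert_p$, and for $h\ge 2^{-\nu-1}$ bound the modulus by $\Vert U_{2^\nu}\Vert_p$ up to the factor $2^{\a\nu}$. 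This yields
\begin{equation*}
S_2^{p_1}\le C\sum_{\nu=m}^\infty 2^{\a\nu p_1}\Vert U_{2^\nu}\Vert_p^{p_1}\le C\sum_{\nu=m}^\infty 2^{\a\nu p_1}E_{2^\nu}(f)_p^{p_1}\le C\sum_{\mu=n}^\infty \mu^{\a p_1-1}E_\mu(f)_p^{p_1},
\end{equation*}
where the last step replaces the dyadic sum by the full sum using monotonicity of $E_\mu(f)_p$. Taking the $p_1$-th root and combining with the $S_1$ estimate gives the stated upper bound. I expect the careful bookkeeping of the $p_1$ exponents, together with verifying that $\Vert U_{2^\nu}\Vert_p\le CE_{2^\nu}(f)_p$ holds with the right (quasi-)norm constants for $0<p<1$, to be the part requiring the most attention; the $1\le p\le\infty$ case reduces to $p_1=1$ and recovers Theorem~\ref{lem4} verbatim.
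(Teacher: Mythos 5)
Your proposal is correct and follows exactly the route the paper intends: the paper gives no separate proof of Theorem~\ref{lem4dt}, stating only that it follows from the scheme of Theorem~\ref{lem4} combined with Lemmas~\ref{lem01dt} and~\ref{lem1dt}, which is precisely the dyadic decomposition with $p_1$-subadditive bookkeeping you carry out. The only blemish is the reference to ``$\w_r^\vp(T_h g,\cdot)\le\w_r^\vp(g,\cdot)$'' in the $S_1$ estimate, a leftover from the Jacobi-translation setting; what is actually needed there is the bound $\w_r^\vp(g,h)_p\le C\Vert g\Vert_p$, which holds in the Ditzian--Totik setting for all $0<p\le\infty$.
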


Theorem~\ref{lem4dt} and some standard arguments give us the following direct estimate:

\begin{theorem}\label{th3dt}
  Let $f\in H_p^{r,\a,\vp}$, $0<p\le\infty$, $0<\a\le r$, and $k,r\in\N$. Then
\begin{equation}\label{eqM77Ddt}
E_n(f)_{H_p^{r,\a,\vp}}\le C \(\int_0^{1/n}\(\frac{\w_{k}^\vp(f,t)_p}{t^\a}\)^{p_1}\frac{\mathrm{d}t}{t}\)^\frac1{p_1},\quad n\in\N,
\end{equation}
where $C$ is a constant independent of $f$ and $n$.
\end{theorem}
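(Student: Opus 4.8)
The plan is to mirror the structure used in the proof of Theorem~\ref{th3} (equivalently \eqref{eqJackHold1}), now in the Ditzian-Totik setting, with the modification that the discrete sum appearing in the upper bound of Theorem~\ref{lem4dt} is controlled by an integral of the $\w_k^\vp$-modulus. First I would reduce the order of the modulus: since $0<\a\le r$ but the right-hand side involves $\w_k^\vp$ for an arbitrary $k\in\N$, I would invoke Lemma~\ref{lemHolderdt} (for $\a<r<k$) to pass between the seminorms in $H_p^{r,\a,\vp}$ and $H_p^{k,\a,\vp}$, so that it suffices to bound $E_n(f)_{H_p^{k,\a,\vp}}$ by the integral. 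This replaces $r$ by $k$ throughout and lets me work with a single order $k$ matching the modulus on the right.

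The core step is to feed the estimates $E_\nu(f)_p\le C\,\w_k^\vp(f,1/\nu)_p$ from the Jackson-type Lemma~\ref{lem01dt} into the upper bound of Theorem~\ref{lem4dt}. The leading term gives
\begin{equation*}
n^\a E_n(f)_p\le C\,n^\a\w_k^\vp\(f,\frac1n\)_p
\le C\(\int_0^{1/n}\(\frac{\w_k^\vp(f,t)_p}{t^\a}\)^{p_1}\frac{\mathrm{d}t}{t}\)^{1/p_1},
\end{equation*}
where the last step uses monotonicity of $\w_k^\vp(f,t)_p$ in $t$ together with the fact that $t\mapsto t^{-\a}\w_k^\vp(f,t)_p$ behaves like $t^{k-\a}$ for small $t$, so the integral over $(0,1/n)$ dominates a constant multiple of its value near the endpoint. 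For the tail sum $\sum_{\nu=n}^\infty \nu^{\a p_1-1}E_\nu(f)_p^{p_1}$ I would again substitute the Jackson bound, obtaining $\sum_{\nu=n}^\infty \nu^{\a p_1-1}\w_k^\vp(f,1/\nu)_p^{p_1}$, and then compare this series with the integral $\int_0^{1/n}(t^{-\a}\w_k^\vp(f,t)_p)^{p_1}\,\mathrm{d}t/t$ by a standard grouping of dyadic (or unit) blocks, using again the monotonicity of the modulus to bound each summand by the corresponding integral piece.

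The step I expect to be the main obstacle is the convergence and comparison of the tail series, i.e. verifying that $\sum_{\nu=n}^\infty \nu^{\a p_1-1}\w_k^\vp(f,1/\nu)_p^{p_1}$ is both finite and comparable to the stated integral. This requires the Marchaud-type behavior $\w_k^\vp(f,t)_p\le C\,t^{k-\a}\,|f|_{H_p^{k,\a,\vp}}$ built into membership in the H\"older space, which guarantees $k-\a>0$ and hence summability; and it requires care with the exponent $p_1=\min(p,1)$ so that the discrete-to-integral comparison is carried out with the correct $\ell^{p_1}$ quasi-norm (here the elementary inequality $(\sum|a_\nu|^{p_1})^{1/p_1}$ with $p_1\le1$ behaves subadditively, which is exactly what makes the block-summation estimate go through in the range $0<p<1$). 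Once the tail is bounded by the integral, combining it with the leading-term estimate and the norm equivalence from Lemma~\ref{lemHolderdt} yields \eqref{eqM77Ddt}.
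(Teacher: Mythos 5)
Your overall plan coincides with the paper's intended proof: \eqref{eqM77Ddt} is obtained by feeding the Jackson inequality of Lemma~\ref{lem01dt} into the upper bound of Theorem~\ref{lem4dt} and comparing the resulting series with the integral, exactly as \eqref{eqJackHold1} is derived in Section~3. Two of your justifications, however, do not hold as stated. First, the opening reduction via Lemma~\ref{lemHolderdt} is both unavailable and unnecessary: that lemma requires $0<\a<r<k$, so it does not cover the admissible cases $\a=r$ or $k\le r$; and no reduction is needed, because the right-hand side of Theorem~\ref{lem4dt} involves only the $L_p$ quantities $E_\nu(f)_p$, which Lemma~\ref{lem01dt} bounds by $\w_k^\vp(f,1/\nu)_p$ for any $k\in\N$ without ever changing the order $r$ on the left. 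Drop this step.

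Second, your treatment of the ``main obstacle'' rests on a false inequality. Membership in $H_p^{k,\a,\vp}$ gives $\w_k^\vp(f,t)_p\le t^{\a}|f|_{H_p^{k,\a,\vp}}$, not $\w_k^\vp(f,t)_p\le C t^{k-\a}|f|_{H_p^{k,\a,\vp}}$; with the correct exponent the summands of $\sum_{\nu\ge n}\nu^{\a p_1-1}\w_k^\vp(f,1/\nu)_p^{p_1}$ are only $O(\nu^{-1})$, so H\"older membership alone does not yield convergence of the tail (nor finiteness of the integral in \eqref{eqM77Ddt}). The point is that no a priori convergence argument is needed: the comparison is termwise,
\begin{equation*}
\nu^{\a p_1-1}\,\w_k^\vp\(f,\tfrac1\nu\)_p^{p_1}\le C\int_{1/(\nu+1)}^{1/\nu}\(\frac{\w_k^\vp(f,t)_p}{t^{\a}}\)^{p_1}\frac{\mathrm{d}t}{t},
\end{equation*}
using the monotonicity of $\w_k^\vp(f,\cdot)_p$ together with the doubling property $\w_k^\vp(f,2t)_p\le C\,\w_k^\vp(f,t)_p$ (for $0<p<1$ this carries the constant $2^{3/p+2(k-1)}$ recorded in Section~5). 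If the integral diverges the claimed inequality is vacuous; if it converges, summing the termwise bounds handles the tail, and restricting the integral to $[1/(2n),1/n]$ gives $n^{\a}\w_k^\vp(f,1/n)_p\le C\(\int_0^{1/n}(t^{-\a}\w_k^\vp(f,t)_p)^{p_1}\,\mathrm{d}t/t\)^{1/p_1}$ for the leading term --- your appeal to ``$t^{-\a}\w_k^\vp(f,t)_p$ behaves like $t^{k-\a}$'' is likewise not a property of a general $f$ and should be replaced by this endpoint argument. With these repairs, plus the routine adjustment for the finitely many indices $\nu\le 4k$ where Lemma~\ref{lem01dt} does not apply, the proof is complete and agrees with the paper's.
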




Some slight improvements of (\ref{eqM77Ddt}) can be obtained by using
the modulus of smoothness
\begin{equation*}
\theta_{k,\a}^\vp(f,\d)_p=\sup_{0<h\le \d}\frac{\w_k^\vp(f,h)_p}{h^\a}.
\end{equation*}

%

\begin{theorem}\label{cor2dt}
   Let $f\in H_p^{r,\a,\vp}$, $0<p\le\infty$, $0<\a< \min(r,k)$ or $0<\alpha=k=r$, and $r,k\in\N$.
Then
\begin{equation*}\label{eqR2dt}
  E_n(f)_{H_p^{r,\a,\vp}}\le C \theta_{k,\a}^\vp\(f,\frac1n\)_p,\quad n>4k,
\end{equation*}
%
%
%
%
%
\begin{equation*}\label{eqR3dt}
  \theta_{k,\a}^\vp\(f,\frac 1n\)_p\le \frac{C}{n^{k-\a}}\(\sum_{\nu=1}^{n} \nu^{(k-\a) {p_1}-1} E_\nu(f)_{H_p^{r,\a,\vp}}^{p_1}\)^\frac1{p_1},\quad n\in\N,
\end{equation*}
where $C$ is a constant independent of $n$ and $f$.
\end{theorem}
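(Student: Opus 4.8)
The plan is to prove the two estimates separately, in each case transcribing the proof of the corresponding $L_{w,p}$ result from Section~3 and replacing every additive (triangle) estimate by its $p_1$-quasi-norm counterpart, where $p_1=\min(p,1)$. The three ingredients from Section~5.1 that make the whole scheme go through are the Jackson theorem (Lemma~\ref{lem01dt}), the Stechkin--Nikolskii inequality (Lemma~\ref{lem1dt}), and the equivalence of the H\"older (quasi-)norms (Lemma~\ref{lemHolderdt}), all valid in the full range $0<p\le\infty$. As in Theorem~\ref{cor2} and Theorem~\ref{thR4}, I would present the case $0<\a<\min(r,k)$ in detail; the boundary case $\a=k=r$ follows by the identical scheme.

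For the direct estimate I would follow the proof of Theorem~\ref{cor2}. Let $P_n\in\mathcal{P}_{n-1}$ be a polynomial of best $L_p$-approximation of $f$. Since $\w_k^\vp$ is $p_1$-subadditive, the seminorm $|\cdot|_{H_p^{k,\a,\vp}}$ satisfies $|g_1+g_2|^{p_1}\le|g_1|^{p_1}+|g_2|^{p_1}$, and by Lemma~\ref{lemHolderdt} it suffices to bound $|f-P_n|_{H_p^{k,\a,\vp}}$. Splitting the defining supremum at $h=1/n$ into a part $S_2$ over $h\ge 1/n$ and a part $S_1$ over $0<h<1/n$, the term $S_2$ is controlled by $n^\a\Vert f-P_n\Vert_p\le Cn^\a\w_k^\vp(f,1/n)_p\le C\theta_{k,\a}^\vp(f,1/n)_p$ via Lemma~\ref{lem01dt}, exactly as in (\ref{eqcor2.2}). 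For $S_1$ I would split $\w_k^\vp(f-P_n,h)_p$ by $p_1$-subadditivity: the $f$-part produces $\theta_{k,\a}^\vp(f,1/n)_p$ directly, while the $P_n$-part is handled by the Stechkin--Nikolskii inequality together with Jackson's theorem, just as in (\ref{eqcor2.3})--(\ref{eqcor2.4}). None of these steps is obstructed when $p<1$; the quasi-norm only forces the bookkeeping to be done with $p_1$-th powers.

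For the inverse estimate I would mirror the proof of Theorem~\ref{thR4}. Fix $m$ with $2^m\le n<2^{m+1}$ and let $P_n$ now be polynomials of best approximation in $H_p^{r,\a,\vp}$. The $p_1$-quasi-triangle inequality gives
\begin{equation*}
\theta_{k,\a}^\vp(f,1/n)_p^{p_1}\le \theta_{k,\a}^\vp(f-P_{2^{m+1}},1/n)_p^{p_1}+\theta_{k,\a}^\vp(P_{2^{m+1}},1/n)_p^{p_1},
\end{equation*}
and the first term is bounded by $C\,E_{2^{m+1}}(f)_{H_p^{r,\a,\vp}}^{p_1}$ through Lemma~\ref{lemHolderdt}. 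For the second term I would invoke Lemma~\ref{lem1dt} to pass to $n^{-(k-\a)}\Vert\vp^k P_{2^{m+1}}^{(k)}\Vert_p$, telescope along the dyadic blocks $U_{2^\nu}=P_{2^{\nu+1}}-P_{2^\nu}$, and sum the resulting derivative norms in the $\ell^{p_1}$ sense, bounding each block by $2^{\nu(k-\a)}E_{2^\nu}(f)_{H_p^{r,\a,\vp}}$ as in (\ref{eq.thR4.3}). This yields a dyadic sum $\sum_{\nu}2^{\nu(k-\a)p_1}E_{2^\nu}(f)_{H_p^{r,\a,\vp}}^{p_1}$, which by the monotonicity of $\nu\mapsto E_\nu(f)_{H_p^{r,\a,\vp}}$ is dominated by $\sum_{\nu=1}^n\nu^{(k-\a)p_1-1}E_\nu(f)_{H_p^{r,\a,\vp}}^{p_1}$, exactly the blocking argument used after (\ref{eq.thR4.3}).

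The main obstacle is precisely this consistent replacement of additive estimates by $p_1$-homogeneous ones: at every step one must check that the quantities being combined---moduli, H\"older seminorms, and the derivative norms of the dyadic blocks---obey the $p_1$-quasi-triangle inequality, and that both the telescoping decomposition and the final dyadic-to-continuous passage are performed with $p_1$-th powers rather than first powers. Once this quasi-norm arithmetic is fixed, every analytic estimate is already supplied by the lemmas of Section~5.1, so no genuinely new inequality is needed beyond what Section~3 provides in the Banach range.
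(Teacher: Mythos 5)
Your proposal is correct and follows exactly the route the paper intends: the paper gives no separate proof of Theorem~\ref{cor2dt}, stating only that it is obtained by running the schemes of Theorem~\ref{cor2} and Theorem~\ref{thR4} with the auxiliary Lemmas~\ref{lemHolderdt}, \ref{lem01dt}, and \ref{lem1dt}, and with the triangle inequality replaced by its $p_1$-power version. Your account of where the $p_1$-quasi-norm bookkeeping enters (the splitting of $\w_k^\vp(f-P_n,h)_p$, the telescoping of the dyadic blocks, and the dyadic-to-continuous summation) is precisely the adaptation required.
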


Note that Theorem~\ref{cor2dt} was obtained in~\cite{BR} in the case $1\le p\le\infty$ and $\a<k=r$.


Now let us consider the problem of the precise order of decrease of the best approximation in $H_p^{r,\a,\vp}$.
The proof of Theorem~\ref{th3KPdt} below is similar to the proof of Theorem~\ref{th3KP}, see also the proof of related results in the case $0<p<1$ in~\cite{K07}. We only note that instead of (\ref{svmod1T})  we will use the  inequality
\begin{equation*}
  \t_{k,\a}^\vp(f,\d)_p\le C\t_{r,\a}^\vp(f,\d)_p,\quad k>r,\quad \d\in (0,\d_0).
\end{equation*}
This inequality can be obtained from Lemma~\ref{lem01dt}, Lemma~\ref{lem1dt}, and the following Bernstein type inequality (see~\cite{DJL})
\begin{equation*}
\Vert \vp^r P_n'\Vert_p\le C n \Vert \vp^{r-1} P_n\Vert_p,\quad 0<p\le\infty,\quad 1\le r\le n,
\end{equation*}
where a constant $C$ is independent of $n$.
Concerning an analog of inequality (\ref{svmod2T})  we have in the case $1\le p\le \infty$, that
\begin{equation}\label{svmod2Tdt}
  \t_{k,\a}^\vp(f,n\d)_p\le C n^{k-\a}\t_{k,\a}^\vp(f,\d)_p,
\end{equation}
where a constant $C$ is independent of $f$ and $\d\in (0,\d_0)$. Inequality (\ref{svmod2Tdt}) is a simple corollary from the corresponding inequality for Ditzian-Totik moduli of smoothness
\begin{equation*}
  \omega_{{k}}^{\vp}(f,n\d)_p\le C n^{k}\omega_{k}^\vp(f,\d)_p
\end{equation*}
(see, e.g.,~\cite[Ch. 2]{DT}).
In the case $0<p<1$, repeating step by step the proof of Lemma~5.2 in~\cite{DHI} (see also Corollary 5.5 in~\cite{DHI})
and using the equality (see, e.g. \cite[p.~187--188]{PePo})
\begin{equation*}
\Delta_{n\delta\varphi(x)}^{r}f(x)=\sum_{\nu=0}^{r(n-1)}A_{\nu,n}^{(k)}\,\Delta_{\delta\varphi(x)}^{r}f\left(x+\left(\nu-\frac{r(n-1)}{2}\right)\delta\varphi(x)\right),
\end{equation*}
where $0<A_{\nu,n}^{(k)}\leq n^{k-1}$ and $x\pm rn \d \vp(x)/2\in (-1,1)$, we can prove that for every
$f\in L_p$, $0<p<1$, and $k, n\in \N$ one obtains
\begin{equation*}
\omega_{k}^\vp(f,n\d)_p\le C n^{\frac{3}{p}+2(k-1)}\omega_{k}^\vp(f,\d)_p\,.
\end{equation*}
Therefore,  in the case $0<p<1$ we get
\begin{equation*}
\t_{k,\a}^\vp(f,n\d)_p\le C n^{\frac{3}{p}+2(k-1)-\a}\t_{k,\a}^\vp(f,\d)_p\,.
\end{equation*}


\begin{theorem}\label{th3KPdt}
Let $f\in H_p^{r,\a,\vp}$, $0<p\le \infty$, $0<\a< r$, $s\ge \a$, and $r,s\in\N$. Then the following assertions are equivalent:

\noindent $(i)$ there exists a constant $L>0$ such that
                     \begin{equation*}
                           \t_{s,\a}^\vp\bigg(f,\frac 1n\bigg)_{p}\le LE_{n}(f)_{H_p^{r,\a,\vp}},\quad n\in\N,
                     \end{equation*}
$(ii)$ for  some
                    \begin{equation*}
k>\left\{
                   \begin{array}{ll}
                     \displaystyle s, & \hbox{$1\le p\le \infty$,} \\
                     \displaystyle 3/p+2(s-1), & \hbox{$0<p<1$},
                   \end{array}
                 \right.
                  \end{equation*}
there exists a constant $M>0$ such that
                     \begin{equation*}
                            \t_{s,\a}^\vp(f,h)_{p}\le M \t_{k,\a}^\vp(f,h)_{p},\quad h>0.
                     \end{equation*}
\end{theorem}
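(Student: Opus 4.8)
The plan is to mirror the proof of Theorem~\ref{th3KP}, which establishes exactly the same equivalence in the weighted Jacobi setting, and to replace each ingredient there by its Ditzian--Totik analog developed in the present section. Concretely, the engine of the argument is the pair of structural inequalities for the moduli $\t_{k,\a}^\vp$: the monotonicity-in-order estimate $\t_{k,\a}^\vp(f,\d)_p\le C\,\t_{r,\a}^\vp(f,\d)_p$ for $k>r$ (which the excerpt has already reduced to Lemma~\ref{lem01dt}, Lemma~\ref{lem1dt}, and the Bernstein inequality of~\cite{DJL}), together with the dilation estimate $\t_{k,\a}^\vp(f,n\d)_p\le C\,n^{\kappa}\,\t_{k,\a}^\vp(f,\d)_p$, where the exponent $\kappa$ is $k-\a$ when $1\le p\le\infty$ and $\frac3p+2(k-1)-\a$ when $0<p<1$. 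The role of Theorem~\ref{cor2} and Theorem~\ref{thR4} in the earlier proof is now played by their Ditzian--Totik counterparts, Theorem~\ref{cor2dt}, whose direct and inverse halves I would invoke in the forms
\begin{equation*}
E_n(f)_{H_p^{r,\a,\vp}}\le C\,\t_{k,\a}^\vp\(f,\tfrac1n\)_p,\qquad
\t_{k,\a}^\vp\(f,\tfrac1n\)_p\le \frac{C}{n^{k-\a}}\(\sum_{\nu=1}^{n}\nu^{(k-\a)p_1-1}E_\nu(f)_{H_p^{r,\a,\vp}}^{p_1}\)^{1/p_1}.
\end{equation*}

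First I would prove the implication $(ii)\Rightarrow(i)$, following the scheme of Theorem~\ref{th3KP} verbatim but carrying the threshold $k>s$ (resp.\ $k>3/p+2(s-1)$) through every dilation step. Assuming $(ii)$, the two structural inequalities combine to give $\t_{k,\a}^\vp(f,n\d)_p\le C M\,n^{\,s'-\a}\,\t_{k,\a}^\vp(f,\d)_p$, where $s'=s$ for $p\ge1$ and $s'=3/p+2(s-1)$ for $p<1$; this is the exact analog of (\ref{rathoreqiP}). Feeding this growth estimate into the direct half of Theorem~\ref{cor2dt}, one bounds $\frac{1}{n^{k-\a}}\sum_{\nu=1}^n\nu^{(k-\a)p_1-1}E_\nu(f)_{H_p^{r,\a,\vp}}^{p_1}$ by a constant multiple of $(M\,\t_{k,\a}^\vp(f,1/n)_p)^{p_1}$, which is the $p_1$-power form of (\ref{rathInvP}). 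Then I would split the sum over $\nu\in[1,mn]$ at $\nu=n$ exactly as in (\ref{eqRathorXXXP}), use the inverse half of Theorem~\ref{cor2dt} on the full range and the just-proved bound on the lower range, exploit the monotonicity of $E_n(f)_{H_p^{r,\a,\vp}}$ to factor it out of the block sum $\sum_{\nu=n+1}^{mn}$, and finally choose the dilation factor $m$ large enough that the coefficient $C\,m^{k-s'}-M$ is positive. This yields $E_n(f)_{H_p^{r,\a,\vp}}\ge C\,\t_{k,\a}^\vp(f,1/n)_p$, and one more application of $(ii)$ upgrades $\t_{k,\a}^\vp$ to $\t_{s,\a}^\vp$, giving $(i)$.

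The reverse implication $(i)\Rightarrow(ii)$ is immediate from the direct estimate in Theorem~\ref{cor2dt}: if $\t_{s,\a}^\vp(f,1/n)_p\le L\,E_n(f)_{H_p^{r,\a,\vp}}$ and $E_n(f)_{H_p^{r,\a,\vp}}\le C\,\t_{k,\a}^\vp(f,1/n)_p$, then chaining them at $n=\lfloor1/h\rfloor$ and using the near-constancy of both moduli on dyadic scales gives $\t_{s,\a}^\vp(f,h)_p\le M\,\t_{k,\a}^\vp(f,h)_p$ for all $h>0$, which is $(ii)$.

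I expect the main obstacle to be bookkeeping in the case $0<p<1$, where two complications interact. First, the triangle inequality is replaced by its $p$-subadditive form, so every splitting into two terms must be done in $p_1$-powers, and the two sums in Theorem~\ref{cor2dt} carry the exponent $p_1$ and an outer $1/p_1$ root; I must make sure the geometric-series summations and the factoring-out of the monotone $E_n$ survive inside these $\ell^{p_1}$ quasinorms. Second, and more delicately, the dilation exponent is no longer $k-\a$ but the inflated $3/p+2(k-1)-\a$, which is precisely why the hypothesis on $k$ in $(ii)$ is stated as $k>3/p+2(s-1)$ rather than $k>s$: this is exactly the condition that keeps $C\,m^{k-s'}-M$ eventually positive after one takes $m$ large, so the threshold in the statement is not cosmetic but is forced by the weaker dilation estimate available for $p<1$. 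Verifying that this inflated exponent propagates consistently through the split-and-choose-$m$ step, and that no term with the worse growth rate sneaks into the lower-range sum, is the one place where genuine care—rather than routine imitation of Theorem~\ref{th3KP}—is required.
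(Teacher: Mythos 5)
Your proposal is correct and follows essentially the same route as the paper, which itself only sketches this proof by reducing it to the scheme of Theorem~\ref{th3KP} together with the two Ditzian--Totik structural inequalities (order-monotonicity of $\t_{k,\a}^\vp$ and the dilation estimate with exponent $k-\a$ for $1\le p\le\infty$ and $3/p+2(k-1)-\a$ for $0<p<1$) and the direct/inverse halves of Theorem~\ref{cor2dt}. Your identification of the inflated dilation exponent as the reason for the threshold $k>3/p+2(s-1)$ in $(ii)$, and of the $p_1$-power bookkeeping as the only genuinely delicate point, matches the paper's own account exactly.
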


\bigskip

\section{Improvements of some estimates of approximation by Bernstein operators in H\"older spaces}

Now let us consider  the approximation of functions by polynomial operators for which we cannot apply the methods from Section~4.
These  are, for example, the Bernstein, Kantorovich, and Szasz-Mirakyan polynomial operators. We restrict ourself to the Bernstein operators
\begin{equation*}
B_n(f,x)=\sum_{k=0}^n f\(\frac kn\)\binom{n}{k}x^k (1-x)^{n-k}.
\end{equation*}

Everywhere below $I=[0,1]$, $\vp(x)=\sqrt{x(1-x)}$, $\Vert f\Vert=\Vert f\Vert_\infty$, $\w_r^\vp(f,h)=\w_r^\vp(f,h)_\infty$, $\t_{r,\a}^\vp(f,h)=\t_{r,\a}^\vp(f,h)_\infty$, and $H^{r,\a,\vp}=H_\infty^{r,\a,\vp}$.

It is well-known (see~\cite{To94}) that for any $f\in C(I)$ the following two-sided estimate holds:
\begin{equation}\label{eqber1}
  \Vert f-B_n(f)\Vert\asymp \w_2^\vp \(f,\frac1{\sqrt n}\).
\end{equation}
In~\cite{BR} it was proved that for any $f\in C(I)$ and $0<\a<2$
\begin{equation}\label{eqber2}
  \Vert f-B_n(f)\Vert_{H^{2,\a,\vp}}\le C\t_{2,\a}^\vp \(f,\frac1{\sqrt n}\),
\end{equation}
where $C$ is a constant independent of $f$ and $n$.

Clearly, inequality (\ref{eqber2}) is not sharp. Indeed, if $f^{(r-1)}$ is locally absolutely continuous and $\vp^r f^{(r)}\in C(I)$, then
\begin{equation}\label{eqberder}
  \w_r^\vp(f,h)\le C_{r} h^r\Vert \vp^r f^{(r)}\Vert
\end{equation}
(see~\cite[p. 63]{DT}). Moreover,  the following result was proved in~\cite{GPTZ}:
\begin{lemma}\label{eqberlem1}
  Let $f\in C^k(I)$ and $k=1,2,\dots,n-1$. Then
\begin{equation}\label{eqberlem1eeq}
  \Vert (f-B_n(f))^{(k)}\Vert\le \Vert f^{(k)}-B_{n-k}(f^{(k)})\Vert+\min\left\{1,\frac{(k-1)^2}{n}\right\}\Vert f^{(k)}\Vert+\w_1\(f^{(k)},\frac kn\),
\end{equation}
where $\w_1(f,h)=\sup\{|f(x)-f(y)|,\, |x-y|<h,\, x,y\in I\}$.
\end{lemma}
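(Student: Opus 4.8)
\emph{Plan.} The starting point is the classical representation of the derivatives of the Bernstein operator through forward differences. Writing $p_{m,j}(x)=\binom{m}{j}x^j(1-x)^{m-j}$ for the Bernstein basis polynomials and $\Delta_{1/n}^k$ for the $k$th forward difference with step $1/n$, one has the well-known formula
\begin{equation*}
  B_n^{(k)}(f,x)=\frac{n!}{(n-k)!}\sum_{j=0}^{n-k}\Delta_{1/n}^k f\(\frac jn\)\,p_{n-k,j}(x).
\end{equation*}
Since $f\in C^k(I)$, I would replace the forward difference by its integral form $\Delta_{1/n}^k f(j/n)=n^{-k}\int_{[0,1]^k}f^{(k)}\((j+t_1+\dots+t_k)/n\)\,\mathrm{d}t$, so that, setting $c_{n,k}:=\frac{n!}{(n-k)!\,n^k}=\prod_{i=0}^{k-1}\(1-\frac in\)\le1$,
\begin{equation*}
  B_n^{(k)}(f,x)=c_{n,k}\sum_{j=0}^{n-k}\(\int_{[0,1]^k}f^{(k)}\(\frac{j+t_1+\dots+t_k}{n}\)\mathrm{d}t\)p_{n-k,j}(x).
\end{equation*}
The proof then proceeds from the telescoping decomposition
\begin{equation*}
  f^{(k)}-B_n^{(k)}(f)=\(f^{(k)}-B_{n-k}(f^{(k)})\)+\(B_{n-k}(f^{(k)})-B_n^{(k)}(f)\),
\end{equation*}
whose first summand is exactly the first term on the right-hand side of (\ref{eqberlem1eeq}).

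\emph{Estimating the second summand.} Using $B_{n-k}(f^{(k)},x)=\sum_j f^{(k)}(j/(n-k))\,p_{n-k,j}(x)$ and splitting $f^{(k)}(j/(n-k))=c_{n,k}f^{(k)}(j/(n-k))+(1-c_{n,k})f^{(k)}(j/(n-k))$, I would write $B_{n-k}(f^{(k)})-B_n^{(k)}(f)$ as a sum of two pieces. The first piece is $(1-c_{n,k})B_{n-k}(f^{(k)})$; since $p_{n-k,j}\ge0$ and $\sum_j p_{n-k,j}=1$, its norm is at most $(1-c_{n,k})\Vert f^{(k)}\Vert$, and the elementary bound $1-\prod_{i=0}^{k-1}(1-i/n)\le\sum_{i=0}^{k-1}\frac in=\frac{k(k-1)}{2n}\le\frac{(k-1)^2}{n}$, together with $1-c_{n,k}\le1$, produces the factor $\min\{1,(k-1)^2/n\}\Vert f^{(k)}\Vert$. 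The second piece is $c_{n,k}\sum_j\int_{[0,1]^k}\big[f^{(k)}(\tfrac{j}{n-k})-f^{(k)}(\tfrac{j+t_1+\dots+t_k}{n})\big]\mathrm{d}t\;p_{n-k,j}$ (here I used that the cube has volume $1$, so the constant $f^{(k)}(j/(n-k))$ may be placed under the integral). Moving the absolute value inside the integral and the sum, and again invoking $c_{n,k}\le1$, $p_{n-k,j}\ge0$, $\sum_j p_{n-k,j}=1$, bounds its norm by the supremum over admissible $j$ and $s=t_1+\dots+t_k\in[0,k]$ of $|f^{(k)}(\tfrac{j}{n-k})-f^{(k)}(\tfrac{j+s}{n})|$.

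\emph{The node displacement.} The crucial elementary estimate is
\begin{equation*}
  \left|\frac{j}{n-k}-\frac{j+s}{n}\right|=\frac{|jk-s(n-k)|}{n(n-k)}\le\frac{k(n-k)}{n(n-k)}=\frac kn,
\end{equation*}
valid for $0\le j\le n-k$ and $0\le s\le k$, since then $0\le jk\le(n-k)k$ and $0\le s(n-k)\le k(n-k)$; moreover both nodes $\tfrac{j}{n-k}$ and $\tfrac{j+s}{n}$ lie in $I=[0,1]$. Consequently the second piece is at most $\w_1(f^{(k)},k/n)$, and assembling the three bounds by the triangle inequality gives (\ref{eqberlem1eeq}).

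\emph{Main obstacle.} The genuinely non-routine ingredient is this node-displacement bound: it simultaneously forces the argument $k/n$ in the modulus $\w_1$ and guarantees that both evaluation points stay inside $[0,1]$, while the appearance of $B_{n-k}$ (rather than $B_n$) on the right is dictated by the drop in degree in the derivative formula. Everything else is bookkeeping with the nonnegative, partition-of-unity weights $p_{n-k,j}$ and the product structure of $c_{n,k}$. A minor technical point is the strict inequality in the definition of $\w_1$; since the displacement is $\le k/n$ and $f^{(k)}$ is continuous on the compact interval $I$, this is handled by the monotonicity of $\w_1(f^{(k)},\cdot)$.
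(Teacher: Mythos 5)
Your proof is correct, and I can confirm every step: the derivative formula $B_n^{(k)}(f,x)=\frac{n!}{(n-k)!}\sum_{j=0}^{n-k}\Delta_{1/n}^k f(j/n)\,p_{n-k,j}(x)$, the integral representation of the $k$th difference, the bound $1-c_{n,k}\le\min\{1,\frac{k(k-1)}{2n}\}\le\min\{1,\frac{(k-1)^2}{n}\}$, and the node-displacement estimate $|jk-s(n-k)|\le k(n-k)$ all hold, and the three coefficients come out as exactly $1$, $1-c_{n,k}$ and $c_{n,k}\le 1$, so no hidden constants appear. Be aware, however, that the paper does not prove this lemma at all: it is quoted verbatim from the reference [GPTZ] (Gonska, Prestin, Tachev, Zhou, Math. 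Nachr. 286 (2013)), so there is no in-paper argument to compare against. The published proof in that reference reaches the same decomposition via the divided-difference form $B_n^{(k)}(f,x)=c_{n,k}\sum_j f^{(k)}(\xi_j)p_{n-k,j}(x)$ with $\xi_j\in(j/n,(j+k)/n)$ (mean value theorem for divided differences), and then uses that $j/(n-k)$ also lies in that interval of length $k/n$; your integral form of $\Delta_{1/n}^k f$ is an equivalent, arguably cleaner, way to obtain the same displacement bound without invoking the mean value theorem, at the cost of moving a supremum through an integral. Your closing remark about the strict inequality in the definition of $\w_1$ is also handled correctly, since for continuous $f^{(k)}$ on a compact interval the suprema over $|x-y|<h$ and $|x-y|\le h$ coincide.
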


Thus, by (\ref{eqberder}), (\ref{eqberlem1eeq}), and (\ref{eqber1}),  we obtain for $f\in C^2(I)$, $n\ge 3$, and $0<\a\le 2$
\begin{equation}\label{eqber3}
\begin{split}
\sup_{0<h<1}\frac{\w_2^\vp(f-B_n(f),h)}{h^\a}&\le  C\Vert \vp^2(f''-B_n''(f))\Vert\le C\Vert f''-B_n''(f)\Vert\\
&\le C\(\Vert f'' -B_{n-2}(f'')\Vert+\w_1\(f'',\frac2n\)+\frac1n\Vert f''\Vert\)\\
&\le C\(\w_2^\vp\(f'',\frac1{\sqrt {n-2}}\)+\w_1\(f'',\frac2n\)+\frac1n\Vert f''\Vert\).
\end{split}
\end{equation}
If in addition, $f\in C^4(I)$, then from (\ref{eqber3}) we get
\begin{equation*}
\Vert f-B_n(f)\Vert_{H^{2,\a,\vp}}=\mathcal{O}\(\frac1n\), \quad n\rightarrow\infty.
\end{equation*}
At the same time, inequality (\ref{eqber2}) yields the following less sharp estimate:
\begin{equation*}
\Vert f-B_n(f)\Vert_{H^{2,\a,\vp}}=\mathcal{O}\(\frac1{n^{1-\a/2}}\),\quad n\to\infty.
\end{equation*}

Let us present here an improvement of (\ref{eqber2}).
\begin{proposition}\label{propber}
  Let $f\in H^{2,\a,\vp}$, $0<\a\le 2$, $k\ge 2$, and $0\le \g\le 1/2 $. Then
  \begin{equation}\label{eqber4}
    \Vert f-B_n(f)\Vert_{H^{2,\a,\vp}}\le C_k \(\t_{k,\a}^\vp\(f,\frac1{n^\g}\)+n^{\a \g} \w_2^\vp\(f,\frac1{\sqrt n}\)\).
  \end{equation}
\end{proposition}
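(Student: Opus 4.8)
The plan is to split the H\"older norm as $\Vert f-B_n(f)\Vert_{H^{2,\a,\vp}}=\Vert f-B_n(f)\Vert+\sup_{0<h<1}h^{-\a}\w_2^\vp(f-B_n(f),h)$ and, inside the seminorm, to cut the supremum at the threshold $h=n^{-\g}$. The sup-norm term and the range $h\ge n^{-\g}$ cost almost nothing: using $\w_2^\vp(u,h)\le 4\Vert u\Vert$, the equivalence (\ref{eqber1}), and $n^{\a\g}\ge 1$, I would bound
\[
\Vert f-B_n(f)\Vert+\sup_{n^{-\g}\le h<1}\frac{\w_2^\vp(f-B_n(f),h)}{h^\a}\le Cn^{\a\g}\Vert f-B_n(f)\Vert\le Cn^{\a\g}\w_2^\vp\(f,\frac1{\sqrt n}\),
\]
which already accounts for the second term on the right-hand side of (\ref{eqber4}). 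Thus everything reduces to the range $0<h<n^{-\g}$.

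For a fixed $h\in(0,n^{-\g})$ the aim is the pointwise estimate
\[
\w_2^\vp(f-B_n(f),h)\le C\w_k^\vp(f,h)+Ch^\a n^{\a\g}\w_2^\vp\(f,\frac1{\sqrt n}\),
\]
since dividing by $h^\a$ and taking the supremum over $h<n^{-\g}$ turns the first summand into $\t_{k,\a}^\vp(f,n^{-\g})$ and the second into $n^{\a\g}\w_2^\vp(f,1/\sqrt n)$, which is exactly (\ref{eqber4}). To get this I would take a polynomial $g$ of degree $\mu=\min(\lceil 1/h\rceil,n)$ realizing the modulus, so that $\Vert f-g\Vert+\mu^{-k}\Vert\vp^k g^{(k)}\Vert\le C\w_k^\vp(f,1/\mu)$ (see \cite{DaDi}), and write $f-B_n(f)=\big((f-g)-B_n(f-g)\big)+\big(g-B_n(g)\big)$. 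Because $B_n$ is positive and reproduces constants, $\Vert B_n\Vert\le 1$ on $C(I)$, so the two non-polynomial pieces are dominated by $\w_2^\vp(\,\cdot\,,h)\le 4\Vert\,\cdot\,\Vert$ and by $\Vert f-g\Vert\le C\w_k^\vp(f,1/\mu)\le C\w_k^\vp(f,h)$ (here $1/\mu\le h$); this produces the summand $C\w_k^\vp(f,h)$.

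The substance of the proof lies in the polynomial piece $g-B_n(g)$. Since $g\in C^\infty$, the first inequality in (\ref{eqber3}) --- that is, (\ref{eqberder}) combined with $h<1$ and $\a\le 2$ --- gives $\w_2^\vp(g-B_n(g),h)\le Ch^2\Vert\vp^2(g-B_n(g))''\Vert$, and Lemma~\ref{eqberlem1} (with $k=2$) together with (\ref{eqber1}) then reduces $\Vert(g-B_n(g))''\Vert$ to $\w_2^\vp(g'',1/\sqrt{n-2})$, $\w_1(g'',2/n)$ and $n^{-1}\Vert g''\Vert$. I would convert these back into moduli of $f$ through the Stechkin--Nikolskii inequality (Lemma~\ref{lem1dt}) and Bernstein--Markov inequalities at the scale $1/\mu\approx h$: the part reflecting the true smoothness of $g$ is absorbed into $C\w_k^\vp(f,h)$ via the realization bound for $\Vert\vp^k g^{(k)}\Vert$, while the unavoidable second-order Bernstein error at the scale $1/\sqrt n$ is matched with $Ch^\a n^{\a\g}\w_2^\vp(f,1/\sqrt n)$.

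I expect this last conversion to be the main obstacle. One must extract the higher-order modulus $\w_k^\vp$, and not merely $\w_2^\vp$, out of $g-B_n(g)$, which means routing the realization estimate for $\Vert\vp^k g^{(k)}\Vert$ through the weighted Stechkin--Nikolskii inequality rather than a lossy Markov bound; and one must keep the degree bookkeeping $\mu=\min(\lceil 1/h\rceil,n)$ consistent across the two regimes $1/n\le h<n^{-\g}$, where $\mu\asymp 1/h$, and $0<h<1/n$, where $\mu=n$ and $B_n$ no longer reproduces $g$, all while handling the endpoint weight $\vp^2$ so that no spurious powers of $n$ survive. Once the pointwise estimate is secured in both regimes, combining it with the easy part of the first paragraph yields (\ref{eqber4}).
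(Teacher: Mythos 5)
The paper gives no written-out argument for this proposition (it only says the proof is ``standard'' and hinges on Lemma~\ref{lemHolderdt}), so I judge your proposal on its own terms. Your outer skeleton is right and matches the paper's treatment of Theorem~\ref{thBer}: splitting the seminorm at $h=n^{-\g}$ and disposing of $\Vert f-B_n(f)\Vert$ and of the range $h\ge n^{-\g}$ via $\w_2^\vp(u,h)\le 4\Vert u\Vert$ and \eqref{eqber1} is correct and accounts for the term $n^{\a\g}\w_2^\vp(f,1/\sqrt n)$. The trouble is concentrated in the range $0<h<n^{-\g}$, and there your argument has two genuine gaps. First, in the regime $0<h<1/n$ you have $\mu=n$, so $1/\mu=1/n>h$ and the chain $\Vert f-g\Vert\le C\w_k^\vp(f,1/\mu)\le C\w_k^\vp(f,h)$ is simply false; the sup-norm bound on $(f-g)-B_n(f-g)$ then leaves you with $h^{-\a}\w_k^\vp(f,1/n)$, which is unbounded as $h\to 0$ (take $\w_k^\vp(f,t)\asymp t^\a$ to see this is not a bookkeeping issue). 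For $h<1/n$ the crude estimate $\w_2^\vp(\cdot,h)\le 4\Vert\cdot\Vert$ cannot be used at all: one must extract the factor $h^{2}$, e.g.\ through $\w_2^\vp(B_nu,h)\le Ch^2\Vert \vp^2(B_nu)''\Vert\le Ch^2 n\Vert u\Vert$ together with the Stechkin--Nikolskii inequality (Lemma~\ref{lem1dt}) for the polynomial part.

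Second, and more fundamentally, the polynomial piece $g-B_n(g)$ does not close. Lemma~\ref{eqberlem1} controls the \emph{unweighted} derivative $\Vert (g-B_n(g))''\Vert$ by $\Vert g''-B_{n-2}(g'')\Vert$, $\w_1(g'',2/n)$ and $n^{-1}\Vert g''\Vert$; passing from the unweighted $\Vert g''\Vert$ back to $\Vert\vp^2 g''\Vert$ for $g\in\mathcal{P}_\mu$ costs an extra factor $\mu^2\asymp h^{-2}$ near the endpoints, and even if one works throughout with the weighted quantity, the realization only gives $\Vert\vp^2 g''\Vert\le C\mu^2\w_2^\vp(f,1/\mu)$, so that after multiplying by $h^{2-\a}$ every one of these terms reduces to a multiple of $h^{-\a}\w_2^\vp(f,h)$, i.e.\ to $\t_{2,\a}^\vp(f,n^{-\g})$ --- a \emph{second}-order quantity. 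Nothing in your argument converts this into $\t_{k,\a}^\vp(f,n^{-\g})$: the bound $\mu^{-k}\Vert\vp^k g^{(k)}\Vert\le C\w_k^\vp(f,1/\mu)$ controls the $k$-th derivative of $g$, not its second derivative, and $\t_{2,\a}^\vp$ is not dominated by $\t_{k,\a}^\vp$ termwise. The missing ingredient is precisely the one the paper points to: the Marchaud-type equivalence of the seminorms $|\cdot|_{H^{2,\a,\vp}}$ and $|\cdot|_{H^{k,\a,\vp}}$ (Lemma~\ref{lemHolderdt}), which is what allows the second-order H\"older seminorm of $f-P$ (for a near-best $P$) to be estimated by $\t_{k,\a}^\vp$ as in the proof of Theorem~\ref{cor2}/Theorem~\ref{cor2dt}. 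Your proposal never invokes this lemma, and without it (or an equivalent Marchaud argument) the leftover second-order terms cannot be absorbed into the right-hand side of \eqref{eqber4}.
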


The proof of this proposition is standard. One should only take into account Lemma~\ref{lemHolderdt}.

Let us show that (\ref{eqber4}) is an improvement of (\ref{eqber2}). Indeed, let $\w_k^\vp(f,h)=\mathcal{O}(h^{\eta_k})$, where $k\ge 2$.
Then, by choosing $\g=\eta_2/\eta_k$, we get
\begin{equation*}
\Vert f-B_n(f)\Vert_{H^{2,\a,\vp}}=\mathcal{O}\(n^{-\frac12\(\eta_2-\a\frac{\eta_2}{\eta_k}\)}\).
\end{equation*}
At the same time, (\ref{eqber2}) provides only $\mathcal{O}(n^{-\frac12(\eta_2-\a)})$.

As one can see, inequality (\ref{eqber4}) represents only a slight improvement of  (\ref{eqber2}). Because, for $f\in C^4(I)$ we already have
$\mathcal{O}(n^{-1})$, but (\ref{eqber4}) even for $f\in C^N(I)$, $N\ge 4$, yields $\mathcal{O}\(n^{-1+\a/N}\)$.

By using a combination of Proposition~\ref{propber} and Lemma~\ref{eqberlem1}, one can obtain stronger results for smooth functions.

Let us consider the more classical H\"older spaces $C^{r,\a}$ with the norm
\begin{equation*}
\Vert f\Vert_{C^{r,\a}}=\Vert f\Vert+\sup_{0<h< 1}\frac{\Vert \D_h^r f\Vert}{h^\a}
\end{equation*}
instead of $H^{2,\a,\vp}$.

\begin{theorem}\label{thBer}
  Let $f\in C^r(I)$, $0<\a\le r$, $r\in \N$, $\g\ge 0$, and $n\ge r^2+1$. Then
  \begin{equation*}
    \begin{split}
        \Vert f-B_n(f)\Vert_{C^{r,\a}}\le \frac1{n^{\g(r-\a)}}\bigg(C\w_2^\vp &\(f^{(r)},\frac1{\sqrt n}\)+\w_1\(f^{(r)},\frac rn\)\\
        &+\frac{(r-1)^2}{n}\Vert f^{(r)}\Vert\bigg)+Cn^{\g \a} \w_2^\vp\(f,\frac1{\sqrt n}\),
     \end{split}
  \end{equation*}
where $C$ is a constant independent of $f$ and $n$ and $\w_1$ was defined in Lemma~\ref{eqberlem1}.
\end{theorem}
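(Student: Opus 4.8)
The plan is to estimate separately the two contributions to $\Vert f-B_n(f)\Vert_{C^{r,\a}}$, namely $\Vert f-B_n(f)\Vert$ and the seminorm $\sup_{0<h<1}h^{-\a}\Vert \D_h^r(f-B_n(f))\Vert$, and to treat the latter by splitting the supremum at the scale $h=n^{-\g}$. The first term is immediate from (\ref{eqber1}): since $\g\ge 0$,
\[
  \Vert f-B_n(f)\Vert\le C\w_2^\vp\(f,\frac1{\sqrt n}\)\le Cn^{\g\a}\w_2^\vp\(f,\frac1{\sqrt n}\),
\]
which is absorbed into the last term of the asserted bound. Writing $g=f-B_n(f)$, it therefore suffices to control
\[
  \sup_{0<h\le n^{-\g}}\frac{\Vert \D_h^r g\Vert}{h^\a}\quad\text{and}\quad \sup_{n^{-\g}<h<1}\frac{\Vert \D_h^r g\Vert}{h^\a}.
\]

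On the range $n^{-\g}<h<1$ (which is empty when $\g=0$) I would use only the crude bound $\Vert \D_h^r g\Vert\le 2^r\Vert g\Vert$ together with $h^{-\a}\le n^{\g\a}$, so that by (\ref{eqber1})
\[
  \sup_{n^{-\g}<h<1}\frac{\Vert \D_h^r g\Vert}{h^\a}\le 2^r n^{\g\a}\Vert g\Vert\le Cn^{\g\a}\w_2^\vp\(f,\frac1{\sqrt n}\),
\]
which reproduces the second term on the right-hand side.

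On the range $0<h\le n^{-\g}$ I would instead use that $g\in C^r(I)$, since $f\in C^r(I)$ and $B_n(f)$ is a polynomial. The integral representation of the $r$th difference gives $\Vert \D_h^r g\Vert\le h^r\Vert g^{(r)}\Vert$, whence, using $r-\a\ge 0$ and $h\le n^{-\g}$,
\[
  \frac{\Vert \D_h^r g\Vert}{h^\a}\le h^{r-\a}\Vert g^{(r)}\Vert\le n^{-\g(r-\a)}\Vert (f-B_n(f))^{(r)}\Vert.
\]
Here Lemma~\ref{eqberlem1} enters with $k=r$ (admissible since $n\ge r^2+1\ge r+1$); because $n\ge r^2+1>(r-1)^2$, the minimum in (\ref{eqberlem1eeq}) collapses to $(r-1)^2/n$, so that
\[
  \Vert (f-B_n(f))^{(r)}\Vert\le \Vert f^{(r)}-B_{n-r}(f^{(r)})\Vert+\frac{(r-1)^2}{n}\Vert f^{(r)}\Vert+\w_1\(f^{(r)},\frac rn\).
\]
Applying (\ref{eqber1}) to $f^{(r)}$ bounds the first summand by $C\w_2^\vp(f^{(r)},1/\sqrt{n-r})$, and the doubling property of $\w_2^\vp$ (valid since $n\ge r^2+1$ forces $n\ge 2r$, hence $n/(n-r)\le 2$) lets me replace $1/\sqrt{n-r}$ by $1/\sqrt n$ at the cost of a constant. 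Collecting these estimates multiplies the bracketed quantity by $n^{-\g(r-\a)}$ and yields exactly the first term of the claimed inequality.

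The argument is essentially bookkeeping once (\ref{eqber1}) and Lemma~\ref{eqberlem1} are available, and I do not expect a genuine obstacle. The two places demanding care are precisely the two uses of the hypothesis $n\ge r^2+1$: it forces the truncation $\min\{1,(r-1)^2/n\}$ in (\ref{eqberlem1eeq}) to equal $(r-1)^2/n$, and it keeps the ratio $n/(n-r)$ bounded so that $\w_2^\vp(f^{(r)},\cdot)$ may be evaluated at $1/\sqrt n$ rather than $1/\sqrt{n-r}$. I would also verify the endpoint conventions for $\D_h^r$ near $0$ and $1$, but these do not affect the sup-norm estimates above.
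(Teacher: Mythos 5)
Your proposal is correct and follows essentially the same route as the paper: split the H\"older seminorm at $h=n^{-\g}$, bound the large-$h$ part by $n^{\g\a}\Vert f-B_n(f)\Vert$ and \eqref{eqber1}, and bound the small-$h$ part via $\Vert\D_h^r g\Vert\le h^r\Vert g^{(r)}\Vert$ followed by Lemma~\ref{eqberlem1} and \eqref{eqber1} applied to $f^{(r)}$. Your extra care with the collapse of the minimum in \eqref{eqberlem1eeq} and the passage from $1/\sqrt{n-r}$ to $1/\sqrt{n}$ only makes explicit details the paper leaves implicit.
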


\begin{proof}
  We have
  \begin{equation}\label{eqthBer1}
    \sup_{0<h< 1}\frac{\Vert \D_h^r (f-B_n(f))\Vert}{h^\a}\le \(\sup_{0<h< 1/n^\g}+\sup_{1/n^\g\leq h<1}\) \frac{\Vert \D_h^r (f-B_n(f))\Vert}{h^\a}=S_1+S_2.
  \end{equation}
By (\ref{eqber1}), it is easy to see that
  \begin{equation}\label{eqthBer2}
S_2\le Cn^{\g \a}\Vert f-B_n(f)\Vert\le C n^{\g \a}\w_2^\vp\(f,\frac1{\sqrt n}\).
  \end{equation}
By using  (\ref{eqberder}) with $\vp\equiv 1$, Lemma~\ref{eqberlem1}, and (\ref{eqber1}), we obtain
\begin{equation}\label{eqthBer3}
\begin{split}
  S_1&\le \frac {C}{n^{\g(r-\a)}}\Vert (f-B_n(f))^{(r)}\Vert \\
  &\le \frac{C}{n^{\g(r-\a)}}\(\w_2^\vp \(f^{(r)},\frac1{\sqrt n}\)+\w_1\(f^{(r)},\frac rn\)
        +\frac{(r-1)^2}{n}\Vert f^{(r)}\Vert\).
\end{split}
\end{equation}
  Thus, combining (\ref{eqthBer1}), (\ref{eqthBer2}), and (\ref{eqthBer3}), we have proved the theorem.
\end{proof}

In this sense Theorem~\ref{thBer} is also an improvement of the main results of~\cite{GPTZ} and~\cite{GPT}.

\bigskip

\textbf{Acknowledgements.} We would like to thank the referees for their valuable comments and remarks.
The authors were supported by FP7-People-2011-IRSES Project number 295164  (EUMLS: EU-Ukrainian Mathematicians for Life Sciences).
The first author was partially supported by Project 15-1bb$\setminus$19, ''Metric Spaces, Harmonic Analysis of
Functions and Operators and Singular and Non-classic Problems for Differential
Equations'', Donetsk National University (Vinnitsa, Ukraine).


\end{document}